\newcommand{\C}{\mathbb{C}}
\newcommand{\R}{\mathbb{R}}
\newcommand{\M}{\mathrm{M}}
\newcommand{\cA}{\mathcal{A}}
\newcommand{\cB}{\mathcal{B}}
\newcommand{\cC}{\mathcal{C}}
\newcommand{\cD}{\mathcal{D}}
\newcommand{\cE}{\mathcal{E}}
\newcommand{\cF}{\mathcal{F}}
\newcommand{\cG}{\mathcal{G}}
\newcommand{\cH}{\mathcal{H}}
\newcommand{\cK}{\mathcal{K}}
\newcommand{\cN}{\mathcal{N}}
\newcommand{\cO}{\mathcal{O}}
\newcommand{\cR}{\mathcal{R}}
\newcommand{\cS}{\mathcal{S}}
\newcommand{\fg}{\mathfrak{g}}
\newcommand{\ii}{\mathrm{i}}
\newcommand{\id}{\mathds{1}}
\renewcommand\Im{\mathrm{Im}}
\renewcommand\Re{\mathrm{Re}}
\newcommand{\tp}{{}^\mathrm{T}}
\newcommand{\dcs}{\oplus_\mathrm{c}}
\DeclareMathOperator\aff{aff}
\DeclareMathOperator\argmax{argmax}
\DeclareMathOperator\conv{conv}
\DeclareMathOperator\diag{diag}
\DeclareMathOperator\ex{ex}
\DeclareMathOperator\idty{id}
\DeclareMathOperator\Img{Img}
\DeclareMathOperator\red{red}
\DeclareMathOperator\ri{ri}
\DeclareMathOperator\spn{span}
\DeclareMathOperator\tr{tr}
 \newtheorem{thm}{Theorem}[section]
 \newtheorem{cor}[thm]{Corollary}
 \newtheorem{lem}[thm]{Lemma}
 \newtheorem{prop}[thm]{Proposition}
 \theoremstyle{definition}
 \theoremstyle{remark}
 \newtheorem{rem}[thm]{Remark}
 \newtheorem{exa}[thm]{Example}
 \numberwithin{equation}{section}
\begin{document}
\selectlanguage{English}
\title{Matrix systems, algebras, and open maps}
\author{Stephan Weis}
\begin{abstract}
Every state on the algebra $\M_n$ of complex $n\times n$ matrices restricts 
to a state on any matrix system. Whereas the restriction to a matrix system 
is generally not open, we prove that the restriction to every *-subalgebra 
of $\M_n$ is open. This simplifies topology problems in matrix theory and 
quantum information theory.
\end{abstract}
\date{Draft of \today}
\subjclass[2020]{Primary 81P16, 94A15, 46A30; Secondary 47A12, 15A60}
\keywords{quantum state, matrix system, open map, numerical range}
%
% Primary
% 81P16 Quantum state spaces, operational and probabilistic concepts
% 94A15 Information theory (general)
% 46A30 Open mapping and closed graph theorems; completeness 
%       (including $B$-, $B_r$-completeness)
%
% Secondary
% 47A12 Numerical range, numerical radius
% 15A60 Norms of matrices, numerical range, 
%       applications of functional analysis to matrix theory
%
\maketitle
%
%
%%%%%%%%%%%%%%%%%%%%%%%%%%%%%%%%%%%%%%%%%%%%%%%%%%%%%%%%%%%%%%%%%%%%%%%%%%%%
%%%%%%%%%%%%%%%%%%%%%%%%%%%%%%%%%%%%%%%%%%%%%%%%%%%%%%%%%%%%%%%%%%%%%%%%%%%%
%%%%%%%%%%%%%%%%%%%%%%%%%%%%%%%%%%%%%%%%%%%%%%%%%%%%%%%%%%%%%%%%%%%%%%%%%%%%
%%%%%%%%%%%%%%%%%%%%%%%%%%%%%%%%%%%%%%%%%%%%%%%%%%%%%%%%%%%%%%%%%%%%%%%%%%%%
%%%%%%%%%%%%%%%%%%%%%%%%%%%%%%%%%%%%%%%%%%%%%%%%%%%%%%%%%%%%%%%%%%%%%%%%%%%%
%
%
\hfill
In honor of Ilya Matveevich Spitkovsky, for his 70th birthday.
\par
%
%%%%%%%%%%%%%%%%%%%%%%%%%%%%%%%%%%%%%%%%%%%%%%%%%%%%%%%%%%%%%%%%%%%%%%%%%%%%
%%%%%%%%%%%%%%%%%%%%%%%%%%%%%%%%%%%%%%%%%%%%%%%%%%%%%%%%%%%%%%%%%%%%%%%%%%%%
%%%%%%%%%%%%%%%%%%%%%%%%%%%%%%%%%%%%%%%%%%%%%%%%%%%%%%%%%%%%%%%%%%%%%%%%%%%%
%%%%%%%%%%%%%%%%%%%%%%%%%%%%%%%%%%%%%%%%%%%%%%%%%%%%%%%%%%%%%%%%%%%%%%%%%%%%
%%%%%%%%%%%%%%%%%%%%%%%%%%%%%%%%%%%%%%%%%%%%%%%%%%%%%%%%%%%%%%%%%%%%%%%%%%%%
%
%
\section{Introduction}
\label{sec:intro}
In the work of Choi and Effros~\cite{ChoiEffros1977}, a \emph{matrix system} 
on $\C^n$ is a complex linear subspace $\cR$ of the full matrix algebra $\M_n$ 
that is self-adjoint (the conjugate transpose $A^\ast$ of every $A\in\cR$ lies 
in $\cR$) and contains the $n\times n$ identity matrix $\id_n$, see also 
\cite{Arveson2010,Paulsen2002}. Let $\cR$ be a matrix system on $\C^n$. If 
$\cR$ is closed under matrix multiplication we call it a 
\emph{*-subalgebra} of $\M_n$. The dual space to $\cR$ is denoted by 
$\cR^\ast:=\{\ell:\cR\to\C\mid \text{$\ell$ is $\C$-linear}\}$ and the cone 
of positive semidefinite matrices in $\cR$ by $\cC(\cR)$. The 
\emph{state space} of $\cR$ is
\begin{align*}
\cS(\cR) & := 
\{\ell\in\cR^\ast \mid \forall A\in\cC(\cR):\ell(A)\geq 0, \ell(\id_n)=1 \}\,.
\end{align*}
\par
The restriction $\cS(\M_n)\to\cS(\cR)$, $\ell\mapsto\ell|_{\cR}$ of states to 
$\cR$ is continuous and affine. Its analytic properties would be perfectly 
clear if it were not for the openness that fails in Example~\ref{ex:ex1}. Let 
$K,L$ be subsets of some Euclidean spaces endowed with their relative topologies 
\cite{Kelley1975}. A map $f:K\to L$ is \emph{open at} $x\in K$ if the image 
of every neighborhood of $x$ in $K$ is a neighborhood of $f(x)$ in $L$. The 
map $f$ is \emph{open} if it is open at every point in $K$. 
\par
It is helpful to represent states as matrices. The antilinear isomorphism 
$\cR\to\cR^\ast$, $A\mapsto\langle A,\,\cdot\,\rangle$ restricts by 
Lemma~\ref{lem:riesz} to the affine isomorphism
\[\textstyle
r_\cR:\cD(\cR)\to\cS(\cR)\,,
\quad 
\rho\mapsto\langle \rho,\,\cdot\,\rangle\,,
\]
where $\langle A,B\rangle:=\tr(A^\ast B)$ is the Frobenius inner 
product of $A,B\in\M_n$,
\begin{align*}
\cH(\cR) 
& :=\{A\in\cR\mid A^\ast=A\}\,,\\ 
\cC(\cR)^\vee 
& := \{A\in\cH(\cR) \mid \forall B\in\cC(\cR):\langle A,B\rangle\geq 0\}\,,\\
\text{and}\quad
\cD(\cR) 
& := \{\rho\in\cC(\cR)^\vee \mid \tr(\rho)=1 \}\,.
\end{align*}
Generalizing a term of von Neumann algebras \cite{BratteliRobinson1987},
we refer to the elements of $\cD(\cR)$ as \emph{density matrices}. The inclusion 
$\cC(\cR)\subset\cC(\cR)^\vee$ can be strict. For example, the density matrix 
$\diag(-1,5,2)/6$ of the matrix system spanned by $\id_3$ and $\diag(1,-1,0)$ is 
indefinite. It is well known, see Rem.~\ref{rem:intersection}, that the identity 
$\cC(\cA)=\cC(\cA)^\vee$ holds for every *-subalgebra $\cA$ of $\M_n$.
\par
\begin{exa}\label{ex:ex1}
We write block diagonal matrices as direct sums, for instance
\[\textstyle
\begin{pmatrix}
A & 0\\
0 & c
\end{pmatrix}
=A\oplus c\in\M_3
\quad
\text{for every}
\quad
A\in\M_2\,,
\quad
c\in\C\cong\M_1\,.
\]
Denoting the imaginary unit by $\ii\in\C$ and the \emph{Pauli matrices} by
\[\textstyle
X:=\begin{pmatrix}
0&1\\1&0
\end{pmatrix}\,,
\quad
Y:=\begin{pmatrix}
0&-\ii\\\ii&0
\end{pmatrix}\,,
\quad 
Z:=\begin{pmatrix}
1&0\\0&-1
\end{pmatrix}\,,
\]
we define the matrix system $\cR:=\spn_\C(\id_3,X\oplus 1,Z\oplus 0)$.
The orthogonal projection of $\C^2$ onto the line spanned by 
$\ket{+}:=(1,1)\tp/\sqrt{2}$ is written $\ket{+}\!\!\bra{+}$ in 
\emph{Dirac's notation} \cite{BengtssonZyczkowski2017,Buckley2021}. 
The open set $\cO:=\{\ell\in\cS(\M_3)\mid\ell(0\oplus 1)>0\}$
contains 
\[\textstyle
\omega_\lambda
:=r_{\M_3}\big[(1-\lambda)\ket{+}\!\!\bra{+}\oplus\lambda\big]\,,
\quad
\lambda\in(0,1]\,.
\]
So $\cO|_\cR:=\{\ell|_\cR:\ell\in\cO\}$ contains $\omega_\lambda|_\cR$
but none of the restrictions $\ell_\theta|_\cR$ of
\[\textstyle
\ell_\theta:=
r_{\M_3}\big[\frac{1}{2}\big(\id_2+\cos(\theta)X+\sin(\theta)Z\big)\oplus 0\big]\,,
\quad
\theta\in(0,2\pi)\,.
\]
Specifically, $\ell_\theta$ has the value $\ell_\theta(A_\theta)=1$ at 
$A_\theta:=\cos(\theta)(X\oplus 1)+\sin(\theta)Z\oplus 0$ and
$\ell(A_\theta)\leq (\cos(\theta)-1)\ell(0\oplus 1)+1$ holds for every 
$\ell\in\cS(\M_3)$. Hence
\[\textstyle
\ell|_\cR(A_\theta)-\ell_\theta|_\cR(A_\theta)
\leq(\cos(\theta)-1)\ell(0\oplus 1)<0\,,
\quad
\ell\in\cO\,.
\]
This shows that $\cO|_\cR$ is not a neighborhood of $\omega_\lambda|_\cR$
as $\lim_{\theta\to0}\ell_\theta|_\cR=\omega_\lambda|_\cR$. In conclusion, 
$\cS(\M_3)\to\cS(\cR)$, $\ell\mapsto\ell|_{\cR}$ is not open at 
$\omega_\lambda$ for any $\lambda\in(0,1]$.
\end{exa}
Asking where $\cS(\M_n)\to\cS(\cR)$, $\ell\mapsto\ell|_{\cR}$ is open is the
same, by Coro.~\ref{cor:restriction}~b), as inquiring at which density 
matrices the orthogonal projection
\begin{equation}\label{eq:projection}
\cD(\M_n)\to\cD(\cR)
\end{equation}
is open. The map \eqref{eq:projection} is defined in \eqref{eq:pi-red1} and 
\eqref{eq:pi-red2} below. For now suffice it to say that it is a restriction 
of the orthogonal projection of $\M_n$ onto $\cR$. 
\par
Corey et al.~\cite{Corey-etal2013} and Leake et 
al.~\cite{Leake-etal2014-preimages,Leake-etal2014-inverse} first studied a 
problem of numerical ranges closely related to \eqref{eq:projection}. The 
problem \eqref{eq:projection} itself was studied by Weis \cite{Weis2014,Weis2016} 
and Rodman et al.~\cite{Rodman-etal2016} when $\cD(\cR)$ is replaced with 
the affinely isomorphic joint numerical range. Numerical ranges are the 
topic of Sec.~\ref{sec:numerical-ranges}. 
\par
\begin{thm}\label{thm:Aopen}
If $\cA$ is a *-subalgebra of $\M_n$, then the orthogonal projection 
$\cD(\M_n)\to\cD(\cA)$ is open.
\end{thm}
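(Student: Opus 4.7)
The plan is to reduce $\cA$ to canonical form via the Wedderburn--Artin theorem and then to construct, for every $\rho\in\cD(\M_n)$, a continuous selection $s_\rho:\cD(\cA)\to\cD(\M_n)$ of $\pi$ with $s_\rho(\pi(\rho))=\rho$. Any such selection yields openness at $\rho$: for $\sigma_k\to\sigma:=\pi(\rho)$ in $\cD(\cA)$, the lifts $\rho_k:=s_\rho(\sigma_k)$ satisfy $\pi(\rho_k)=\sigma_k$ and $\rho_k\to\rho$. Since $\rho$ is arbitrary, $\pi$ is open on $\cD(\M_n)$.

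First I would pass to normal form. By Wedderburn--Artin there is a unitary $U\in\M_n$ with $U^*\cA U=\bigoplus_i(\M_{n_i}\otimes\id_{m_i})$ on $\bigoplus_i\C^{n_i}\otimes\C^{m_i}$. Unitary conjugation is a homeomorphism of $\cD(\M_n)$ intertwining the orthogonal projections, so I may assume $\cA$ is in this form. Then $\pi$ becomes the explicit conditional expectation $\pi(\rho)=\bigoplus_i\tfrac{1}{m_i}\tr_{\cK_i}(\rho_{ii})\otimes\id_{m_i}$, which is completely positive, trace preserving, and faithful on positives. The structural input absent for general matrix systems (cf.\ Example~\ref{ex:ex1}) is $\cC(\cA)=\cC(\cA)^\vee$, which gives $\cD(\cA)\subseteq\cD(\M_n)$ and makes the inclusion $\iota:\cD(\cA)\hookrightarrow\cD(\M_n)$ a continuous affine section of $\pi$. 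Faithfulness then yields a support dominance: if $P\in\cA$ is the support projection of $\sigma:=\pi(\rho)$ and $Q:=\id_n-P$, then $\pi(Q\rho Q)=Q\sigma Q=0$ forces $Q\rho Q=0$, so $\rho$ is supported in $P$.

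With these facts I would build $s_\rho$ as follows. When $\sigma$ is invertible in $\cA$ (each reduced block $s_i:=\tr_{\cK_i}(\rho_{ii})\in\M_{n_i}$ is invertible), set
\[
s_\rho(\sigma'):=T_{\sigma'}\,\rho\,T_{\sigma'}^*,\qquad T_{\sigma'}:=\bigoplus_i\bigl((s_i')^{1/2}s_i^{-1/2}\otimes\id_{m_i}\bigr)\in\cA,
\]
where $s_i'$ are the reduced blocks of $\sigma'$. Conjugation by an element of $\cA$ preserves positivity; the bimodule property of the conditional expectation $\pi$ gives $\pi(s_\rho(\sigma'))=T_{\sigma'}\sigma T_{\sigma'}^*=\sigma'$; the trace evaluates to $1$ by cyclicity; and $T_{\sigma'}\to\id_n$ as $\sigma'\to\sigma$, so $s_\rho$ is continuous with $s_\rho(\sigma)=\rho$. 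When $\sigma$ is singular, support dominance places $\rho$ inside the compressed *-subalgebra $P\cA P\subseteq P\M_nP$, where the invertible case applies to the component $P\sigma'P$ of $\sigma'$. The remaining transverse part, supported in $P^\perp$ and small because $\sigma'\to\sigma$, is added via $\iota$; a Schur-complement argument controls the off-diagonal blocks of $\sigma'$ between $P$ and $P^\perp$ so that the total lift stays positive and depends continuously on $\sigma'$.

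The main obstacle is the singular case: without support dominance, naive lifts of $\sigma'$ leave the positive cone. It is precisely the support dominance---a consequence of $\pi$ being a faithful conditional expectation onto a *-subalgebra---that rescues the construction. The same ingredient fails for general matrix systems, matching the failure of openness exhibited in Example~\ref{ex:ex1}.
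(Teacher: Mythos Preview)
Your route via continuous local sections is genuinely different from the paper's, which never constructs sections at all: it writes the conditional expectation as an average over a finite cyclic group of isometries of $\cD(\M_n)$ (reflections for the block-diagonal step, cyclic permutations for the $\M_q\otimes\id_k$ step) and then invokes the \emph{stability} of $\cD(\M_n)$, i.e.\ the openness of the arithmetic-mean map, to conclude. Your invertible case is correct and elegant, and the support-dominance observation is exactly the right structural input.

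The singular case, however, has a genuine gap. First a misstatement: support dominance yields $\rho\in P\M_nP$, not $\rho\in P\cA P$ as you write. More seriously, the recipe ``lift $P\sigma'P$ via the invertible case, add the rest via $\iota$'' does not preserve positivity, and your gesture toward a Schur-complement control of the off-diagonal block does not specify a continuous choice. Concretely, take $\cA=\M_2\oplus\M_1\subset\M_3$, $\rho=\tfrac12\!\left(\begin{smallmatrix}1&0&1\\0&0&0\\1&0&1\end{smallmatrix}\right)$, $\sigma=\diag(\tfrac12,0,\tfrac12)$, $P=\diag(1,0,1)$, and $\sigma'\in\cD(\cA)$ with a small nonzero $(1,2)$-entry $\delta$. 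The matrix $t_\rho(P\sigma'P)+P\sigma'Q+Q\sigma'P+Q\sigma'Q$ has determinant $-|\delta|^2/2<0$. The obstruction is that $t_\rho(P\sigma'P)$ is rank-deficient inside $P\M_nP$, so the inclusion-lifted off-diagonal block $P\sigma'Q$ need not lie in its range.

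Your approach is salvageable by a single clean formula that avoids the case split. With $W:=\sigma^{-1/2}\rho\,\sigma^{-1/2}\in P\M_nP$ (inverse in $P\cA P$) and $Q:=\id_n-P$, one has $\pi(W+Q)=P+Q=\id_n$, so
\[
s_\rho(\sigma'):=(\sigma')^{1/2}(W+Q)(\sigma')^{1/2}
\]
is positive semidefinite, satisfies $\pi(s_\rho(\sigma'))=(\sigma')^{1/2}\id_n(\sigma')^{1/2}=\sigma'$ by the bimodule property, has trace one, depends continuously on $\sigma'$, and returns $\rho$ at $\sigma'=\sigma$. This closes the gap and gives a proof that is arguably more elementary and more constructive than the paper's stability argument.
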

Thm.~\ref{thm:Aopen} can simplify the problem \eqref{eq:projection}, and 
related continuity problems, if $\cR$ is included in a *-subalgebra of 
$\M_n$ smaller than $\M_n$ as we show in Sec.~\ref{sec:simpler-subalgebras}. 
Examples from quantum information theory are presented in 
Sec.~\ref{sec:quantum}.
\par
Thm.~\ref{thm:Aopen} is proved in Sec.~\ref{sec:proof-thm}. The main ideas 
are that $\cA$ is a direct sum of full matrix algebras and that $\cD(\M_n)$ 
is stable and highly symmetric. Thereby, a convex set $K$ is \emph{stable} 
if the \emph{midpoint map}
\[\textstyle
K\times K\to K\,,
\quad
(x,y)\mapsto\frac{1}{2}(x+y)
\]
is open. A convex set is always (except Rem.~\ref{rem:intro}) understood to 
be included in a Euclidean space. Debs \cite{Debs1979} proved the stability 
of $\cD(\cA)$ for $\cA=\M_n$, Papadopoulou \cite{Papadopoulou1982} achieved 
it for *-subalgebras $\cA$ of $\M_n$, and we do it for \emph{real} 
*-subalgebras $\cA$ of $\M_n$ in Sec.~\ref{sec:retraction} using Clausing's 
work on retractions \cite{Clausing1978}. The analogue of Thm.~\ref{thm:Aopen} 
is established for the algebra $\cA=\M_n(\R)$ of real $n\times n$ matrices 
in Sec.~\ref{sec:real}. The Secs.~\ref{sec:states} and~\ref{sec:directsums} 
collect preliminaries.
\par
\begin{rem}\label{rem:intro}~
\begin{enumerate}
\item[a)]
Vesterstrøm \cite{Vesterstrom1973} proved that the restriction of states 
to the center of a von Neumann algebra is open. Thm.~\ref{thm:Aopen}
is a noncommutative analogue in the finite-dimensional setting.
\item[b)]
Stability is a meaningful concept in optimal control 
\cite{Papadopoulou1982,Witsenhausen1968} and quantum information theory 
\cite{Shirokov2012} because of the following ``CE-property''. A 
compact convex set $K$ is stable if and only if for every continuous 
function $f:K\to\R$, the envelope $f^\vee(x):=\sup\{h(x): h\leq f\}$, 
$x\in K$ is continuous, see \cite{Vesterstrom1973,Brien1976} and
\cite{Shirokov2012}. Here, the supremum is taken over all continuous 
affine functions $h:K\to\R$ whose graphs lie below the graph of $f$. 
\end{enumerate}
\end{rem}
%
%
%%%%%%%%%%%%%%%%%%%%%%%%%%%%%%%%%%%%%%%%%%%%%%%%%%%%%%%%%%%%%%%%%%%%%%%%%%%%
%%%%%%%%%%%%%%%%%%%%%%%%%%%%%%%%%%%%%%%%%%%%%%%%%%%%%%%%%%%%%%%%%%%%%%%%%%%%
%%%%%%%%%%%%%%%%%%%%%%%%%%%%%%%%%%%%%%%%%%%%%%%%%%%%%%%%%%%%%%%%%%%%%%%%%%%%
%%%%%%%%%%%%%%%%%%%%%%%%%%%%%%%%%%%%%%%%%%%%%%%%%%%%%%%%%%%%%%%%%%%%%%%%%%%%
%%%%%%%%%%%%%%%%%%%%%%%%%%%%%%%%%%%%%%%%%%%%%%%%%%%%%%%%%%%%%%%%%%%%%%%%%%%%
%
\section{States and density matrices}
\label{sec:states}
The aim of this section is to translate openness questions from states to 
density matrices. 
In the sequel, we refer to matrix systems and *-subalgebras synonymously as 
\emph{complex matrix systems} and \emph{complex *-subalgebras}, respectively.
In addition, we introduce real *-subalgebras as they have a greater variety 
of state spaces than the complex ones. See, however, 
Rem.~\ref{rem:realvscomplex}.
\par
A \emph{real matrix system} on $\C^n$ is a real linear subspace $\cR$ of $\M_n$ 
that is self-adjoint and contains $\id_n$. Let $\cR$ denote a real matrix 
system on $\C^n$. We endow $\cR$ with the Euclidean scalar product 
\begin{equation}\label{eq:scalarp}
\cR\times\cR\to\R\,,
\quad
(A,B)\mapsto\Re\langle A,B\rangle\,,
\end{equation} 
where $\langle A,B\rangle=\tr A^\ast B$ and $\Re(a+\ii b)=a$ is the real part of 
a complex number, $A,B\in\M_n$, $a,b\in\R$. The positive cone, space of hermitian 
matrices, dual cone, and space of density matrices are defined verbatim to their 
respective complex counterparts defined in Sec.~\ref{sec:intro}, and are denoted 
by 
\[
\cC(\cR),
\quad 
\cH(\cR),
\quad 
\cC(\cR)^\vee,
\quad\text{and}\quad 
\cD(\cR)\,.
\]
We call $\cR$ a \emph{real *-subalgebra} of $\M_n$ if $\cR$ is closed under 
matrix multiplication. 
\par
Generalizing a definition from real algebras \cite[Sec.~4.5]{Li2003}, 
we define the \emph{real state space} of $\cR$ as
\[
\cS_\R(\cR):=\{\ell\in\cR_{\R,0}^\ast\mid 
\forall A\in\cC(\cR):\ell(A)\geq 0, \ell(\id_n)=1\}\,,
\]
where
\[
\cR_{\R,0}^\ast:=
\{\ell:\cR\to\R \mid \mbox{$\ell$ is $\R$-linear},
\forall A\in\cH^-(\cR):\ell(A)=0\}
\]
is the space of real functionals vanishing on the skew-hermitian 
matrices
\[
\cH^-(\cR):=\{A\in\cR:A^\ast=-A\}\,.
\]
\par
\begin{lem}\label{lem:riesz-real}
If $\cR$ is a real matrix system on $\C^n$, then the map
\[
r_{\R,\cR}:\cD(\cR)\to\cS_\R(\cR), 
\quad 
\rho\mapsto\Re\langle\rho,\,\cdot\,\rangle
\]
is a real affine isomorphism between compact convex sets.
\end{lem}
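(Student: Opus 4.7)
The plan is to invoke Riesz representation in the real Euclidean space $(\cR,\Re\langle\cdot,\cdot\rangle)$ and check that the representative of a real state lies in $\cD(\cR)$. Two facts I use throughout are the decomposition $\cR=\cH(\cR)\oplus\cH^-(\cR)$, $A\mapsto\tfrac{1}{2}(A+A^\ast)+\tfrac{1}{2}(A-A^\ast)$, which is available because $\cR$ is self-adjoint, and the parity identity that $\tr(HA)$ is purely imaginary whenever $H=H^\ast$ and $A=-A^\ast$ (from $(HA)^\ast=-AH$ together with trace cyclicity).

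First I would verify that $r_{\R,\cR}$ is well defined. For $\rho\in\cD(\cR)$, the functional $\Re\langle\rho,\,\cdot\,\rangle$ is $\R$-linear; it vanishes on $\cH^-(\cR)$ by the parity identity, satisfies $\Re\langle\rho,\id_n\rangle=\tr(\rho)=1$, and is nonnegative on $\cC(\cR)$ because $\langle\rho,B\rangle\in\R$ when $\rho,B\in\cH(\M_n)$. Affinity of $r_{\R,\cR}$ is automatic from linearity in $\rho$. Injectivity follows at once: if $r_{\R,\cR}(\rho_1)=r_{\R,\cR}(\rho_2)$, then $\rho_1-\rho_2\in\cH(\cR)\subset\cR$ pairs with itself to give $\|\rho_1-\rho_2\|^2=0$.

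The main step is surjectivity. Given $\ell\in\cS_\R(\cR)$, Riesz representation in the finite-dimensional real inner-product space $\cR$ produces a unique $\rho\in\cR$ with $\Re\langle\rho,\,\cdot\,\rangle=\ell$. Decomposing $\rho=\rho_h+\rho_s$, the parity identity gives $\Re\langle\rho_h,A\rangle=0$ for every $A\in\cH^-(\cR)$, while $\ell$ vanishes on $\cH^-(\cR)$ by definition of $\cR_{\R,0}^\ast$; hence $\Re\langle\rho_s,A\rangle=0$ on $\cH^-(\cR)$, and testing with $A=\rho_s$ yields $\|\rho_s\|^2=0$, so $\rho\in\cH(\cR)$. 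Then for $B\in\cC(\cR)$, $\langle\rho,B\rangle$ is real and equals $\Re\langle\rho,B\rangle=\ell(B)\geq 0$, proving $\rho\in\cC(\cR)^\vee$; finally $\tr(\rho)=\Re\langle\rho,\id_n\rangle=\ell(\id_n)=1$.

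For compactness, $\id_n$ lies in the interior of $\cC(\cR)$ relative to $\cH(\cR)$ because small hermitian perturbations of $\id_n$ remain positive definite; by standard cone duality, the trace-one slice $\cD(\cR)=\cC(\cR)^\vee\cap\{\tr=1\}$ is then bounded, and it is visibly closed, so compact. The continuous affine bijection $r_{\R,\cR}$ transports this to compactness of $\cS_\R(\cR)$, and the inverse of an affine bijection is affine. The main obstacle is the hermiticity sub-step of surjectivity: it is precisely the vanishing clause in the definition of $\cR_{\R,0}^\ast$ that rules out a skew-hermitian component of the Riesz representative, and without that clause the lemma would not hold.
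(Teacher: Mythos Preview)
Your proof is correct and follows essentially the same approach as the paper's: both use the orthogonal decomposition $\cR=\cH(\cR)\oplus\cH^-(\cR)$, Riesz representation on the real inner-product space, and the fact that $\id_n$ is interior to $\cC(\cR)$ in $\cH(\cR)$ for compactness. You have simply made explicit the verifications that the paper dismisses as ``straightforward,'' in particular the hermiticity of the Riesz representative via what you call the parity identity.
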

\begin{proof}
As $\cR$ is the orthogonal direct sum $\cR=\cH(\cR)\oplus\cH^-(\cR)$,
the real linear isomorphism \cite[Sec.~67]{Halmos1974}
\[
r_{\R,\cR}:
\cR\to\{\ell:\cR\to\R\mid \mbox{$\ell$ is $\R$-linear} \}\,,
\quad
A\mapsto\Re\langle A,\,\cdot\,\rangle
\]
restricts to the real linear isomorphism $\cH(\cR)\to\cR_{\R,0}^\ast$. 
Restricting $r_{\R,\cR}$ further, we obtain an injective map whose 
domain is $\cD(\cR)$. So, it suffices to prove 
$r_{\R,\cR}(\rho)\in\cS_\R(\cR)$ for 
$\rho\in\cD(\cR)$, and that $r_{\R,\cR}:\cD(\cR)\to\cS_\R(\cR)$ is 
surjective. Both assertions are straightforward to verify.
The convex set $\cD(\cR)$ is compact, see \cite[Lemma~3.3]{Plaumann-etal2021},
because $\id_n$ lies in the interior of $\cC(\cR)$ in the topology of 
$\cH(\cR)$. The convex set $\cS_\R(\cR)$ is compact as it is the image of a
compact set under a continuous map.
\end{proof}
Returning to complex functionals, we consider the real vector space
\[
\cR_\mathrm{her}^\ast:=
\{\ell:\cR\to\C \mid \mbox{$\ell$ is $\C$-linear},
\forall A\in\cH(\cR):\ell(A)\in\R\}
\]
of complex functionals taking real values on hermitian matrices.
\par
\begin{lem}\label{lem:riesz}
If $\cR$ is a complex matrix system on $\C^n$, then the map
\[
r_\cR:\cD(\cR)\to\cS(\cR)\,,
\quad 
\rho\mapsto\langle\rho,\,\cdot\,\rangle
\]
is a real affine isomorphism between compact convex sets.
\end{lem}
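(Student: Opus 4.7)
The plan is to mirror the proof of Lemma~\ref{lem:riesz-real}, passing from real linear functionals to complex linear ones. The Frobenius pairing is non-degenerate on $\cR$, so $A \mapsto \langle A, \cdot \rangle$ is an antilinear bijection $\cR \to \cR^\ast$, hence a real linear isomorphism. Using the orthogonal decomposition $\cR = \cH(\cR) \oplus \cH^-(\cR) = \cH(\cR) \oplus \ii\cH(\cR)$, I would first show that this restricts to a real linear isomorphism $\cH(\cR) \to \cR_\mathrm{her}^\ast$. Well-definedness is immediate from $\tr(AB) \in \R$ for hermitian $A, B$. For surjectivity onto $\cR_\mathrm{her}^\ast$, given $\ell \in \cR_\mathrm{her}^\ast$ pick the unique $A \in \cR$ with $\langle A, \cdot \rangle = \ell$ and decompose $A = A_1 + \ii A_2$ with $A_1, A_2 \in \cH(\cR)$; then $\langle A, B\rangle = \tr(A_1 B) - \ii\tr(A_2 B)$ for $B \in \cH(\cR)$, so realness of $\ell$ on $\cH(\cR)$ together with positive definiteness of $\langle\cdot,\cdot\rangle$ on $\cH(\cR)$ forces $A_2 = 0$.

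Restricting further, the inclusion $r_\cR(\cD(\cR)) \subset \cS(\cR)$ is routine: $\langle \rho, \id_n \rangle = \tr(\rho) = 1$ and $\langle \rho, A\rangle \geq 0$ for $A \in \cC(\cR)$ hold by definition of $\cD(\cR)$. For the reverse inclusion, I would first observe that $\cS(\cR) \subset \cR_\mathrm{her}^\ast$: since $\id_n$ lies in the interior of $\cC(\cR)$ inside $\cH(\cR)$, every $A \in \cH(\cR)$ is a difference of two elements of $\cC(\cR)$, so every state takes real values on $\cH(\cR)$. Then the unique preimage $\rho \in \cH(\cR)$ of $\ell \in \cS(\cR)$ automatically satisfies $\tr(\rho) = \ell(\id_n) = 1$ and $\langle \rho, A\rangle = \ell(A) \geq 0$ on $\cC(\cR)$, placing $\rho \in \cD(\cR)$. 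Compactness of $\cD(\cR)$ follows as in Lemma~\ref{lem:riesz-real} from the interior-point property of $\id_n$, and compactness of $\cS(\cR)$ from continuity of $r_\cR$.

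The main, albeit minor, obstacle I anticipate is justifying the inclusion $\cS(\cR) \subset \cR_\mathrm{her}^\ast$, since the definition of $\cS(\cR)$ does not explicitly require real values on all of $\cH(\cR)$. A more concise alternative reduces the claim directly to Lemma~\ref{lem:riesz-real}: the identity $r_{\R,\cR} = \Re \circ r_\cR$ holds on $\cD(\cR)$, and $\ell \mapsto \Re\ell$ is a real affine bijection $\cS(\cR) \to \cS_\R(\cR)$ with inverse $\ell_\R \mapsto \ell_\R(\cdot) - \ii\,\ell_\R(\ii\,\cdot)$, so the complex statement follows from the real one.
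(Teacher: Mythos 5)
Your proposal is correct, and the ``concise alternative'' you end with is essentially the paper's own proof: the paper reduces to Lemma~\ref{lem:riesz-real} via the real linear isomorphism $\alpha:\cR_{\R,0}^\ast\to\cR_\mathrm{her}^\ast$ with inverse $\ell\mapsto\Re\circ\ell$, which is exactly your pair $\ell\mapsto\Re\ell$ and $\ell_\R\mapsto\ell_\R(\cdot)-\ii\,\ell_\R(\ii\,\cdot)$. Your first, more self-contained route is a sound minor variant, and the obstacle you single out --- that $\cS(\cR)\subset\cR_\mathrm{her}^\ast$ must be justified since realness on $\cH(\cR)$ is not part of the definition of a state --- is a genuine point that the paper leaves implicit; your argument for it (every hermitian element is a difference of elements of $\cC(\cR)$, e.g. $A=\lambda\id_n-(\lambda\id_n-A)$ for large $\lambda$) is correct and worth recording.
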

\begin{proof}
The complex antilinear isomorphism $r_\cR:\cR\to\cR^\ast$, 
$A\mapsto\langle A,\,\cdot\,\rangle$ restricts to a real linear isomorphism 
$\cH(\cR)\to\cR_\mathrm{her}^\ast$. Furthermore,
\[
\alpha:\cR_{\R,0}^\ast\to\cR_\mathrm{her}^\ast\,,
\quad
\alpha(\ell)[A+\ii B]:=\ell(A)+\ii\ell(B)\,,
\quad
A,B\in\cH(\cR)\,,
\]
is a real linear isomorphism, whose inverse is given by
$\ell\mapsto\Re\circ\ell$. So, the following diagram commutes.
(Note that two arrows in opposite directions denote a bijection.)
\[
\xymatrix{%
\cH(\cR) \ar@<.5ex>[dr]^{r_\cR} \ar@<-.5ex>[d]_{r_{\R,\cR}} \\
\cR_{\R,0}^\ast \ar@<.5ex>[r]^{\alpha} \ar@<-.5ex>[u]
 & \cR_\mathrm{her}^\ast \ar@<.5ex>[l]^{\Re} \ar@<.5ex>[ul]
}\]
Moreover, if $\ell_1\in\cR_{\R,0}^\ast$ and $\ell_2\in\cR_\mathrm{her}^\ast$
satisfy $\ell_2=\alpha(\ell_1)$, then
$\ell_1\in\cS_\R(\cR)$ holds if and only if $\ell_2\in\cS(\cR)$. Hence, 
the claim follows from Lemma~\ref{lem:riesz-real}.
\end{proof}
\begin{exa}\label{ex:Bloch}
Real *-subalgebras of $\M_n$ have a richer class of state spaces than the 
complex ones. The \emph{Bloch ball} \cite[Sec.~5.2]{BengtssonZyczkowski2017} 
\[\textstyle
\cD(\M_2)=\big\{\frac{1}{2}(\id_2+c_XX+c_YY+c_ZZ)
\colon c_X,c_Y,c_Z\in\R, c_X^2+c_Y^2+c_Z^2\leq 1\big\}
\]
is a three-dimensional Euclidean ball of radius $1/\sqrt{2}$. The set of 
density matrices of the algebra $\M_2(\R)$ of real $2\times 2$ matrices 
is the great disk 
\[\textstyle
\cD(\M_2(\R))=\big\{\frac{1}{2}(\id_2+c_XX+c_ZZ)
\colon c_X,c_Z\in\R, c_X^2+c_Z^2\leq 1\big\}
\]
of the Bloch ball $\cD(\M_2)$. There is no complex *-subalgebra of $\M_n$ 
in any dimension $n$ whose state space is a disk.
\end{exa}
\begin{rem}\label{rem:realvscomplex}
Real and complex matrix systems have the same families of state spaces. If 
$\cR$ is a real or complex matrix system on $\C^n$, then the real matrix 
system of hermitian matrices $\cH(\cR)$ has the same set of hermitian 
matrices and hence the same set of density matrices as $\cR$. Conversely, 
any real matrix system $\cR$ included in $\cH(\M_n)$ has the same same set 
of hermitian matrices and the same set of density matrices as the complex 
matrix system $\cR\oplus\ii\cR$. The set of density matrices of a real
or complex matrix system $\cR$ is affinely isomorphic to the state space of
$\cR$ by Lemma~\ref{lem:riesz-real} or~\ref{lem:riesz}, respectively.
\end{rem}
Let $\cR_1,\cR_2$ be real matrix systems on $\C^n$ and let $\cR_2\subset\cR_1$. 
We abbreviate
$\cH_i:=\cH(\cR_i)$,
$\cC_i:=\cC(\cR_i)$,
$\cC_i^\vee:=\cC^\vee(\cR_i)$,
$\cD_i:=\cD(\cR_i)$,  
$\cS_{\R,i}:=\cS_\R(\cR_i)$,
$\cS_i:=\cS(\cR_i)$, 
$r_{\R,i}:=r_{\R,\cR_i}$, and
$r_i:=r_{\cR_i}$, $i=1,2$.
Usually, the orthogonal projection of $\cR_1$ onto $\cR_2$ is the idempotent 
self-adjoint linear map $\cR_1\to\cR_1$ whose range is $\cR_2$. Reducing the 
codomain to the range, we get a map
\begin{equation}\label{eq:pi-red1}
\pi:\cR_1\to\cR_2\,,
\end{equation}
whose value at $A\in\cR_1$ is specified by the equations
$\Re\langle A-\pi(A),B\rangle=0$ for all $B\in\cR_2$. We refer to $\pi$ as 
the \emph{orthogonal projection} of $\cR_1$ onto $\cR_2$ in this paper. The 
notation $\cR_1\to\cR_2$ conveying domain and range is useful especially in 
Sec.~\ref{sec:proof-thm}. The adjoint of $\pi$ is the embedding 
$\cR_2\to\cR_1$, $A\mapsto A$. If $\cR_1$ and $\cR_2$ are complex matrix 
systems, then the Frobenius inner product induces the same orthogonal 
projection as the Euclidean scalar product \eqref{eq:scalarp}.
\par
As $\cR_i$ is the orthogonal direct sum of the spaces of its hermitian and 
skew-hermitian matrices, $i=1,2$, the map \eqref{eq:pi-red1} restricts to 
$\pi:\cH_1\to\cH_2$, which we denote (aware of the notational imprecision) 
by the same symbol $\pi$. The value of $\pi$ at $A\in\cH_1$ is specified by 
$\langle A-\pi(A),B\rangle=0$ for all $B\in\cH_2$.
\par
A \emph{cone} in a  Euclidean space 
$(E,\langle\!\langle\cdot,\cdot\rangle\!\rangle)$ is a subset $C$ of $E$ 
that is closed under multiplication with positive 
scalars. A \emph{base} of a cone $C$ is a subset $B$ of $C$ such that 
$0\not\in\aff B$ and such that for all nonzero $x\in C$ there exist 
$y\in B$ and $s>0$ such that $x=s y$ holds. Note that we have 
$B=C\cap\aff B$ for every base $B$ of a cone $C$. The set 
$M^\vee:=\{x\in E\mid\forall y\in M\colon 
\langle\!\langle x,y\rangle\!\rangle\geq 0\}$ 
is a closed convex cone for every subset $M\subset E$, called the 
\emph{dual cone} to $M$. Regarding duality of convex cones, we refer 
to \cite[Sec.~14]{Rockafellar1970}.
\par
\begin{lem}\label{lem:dualcones}
Let $\cR_1,\cR_2$ be real matrix systems on $\C^n$ such that $\cR_2\subset\cR_1$
and let $\pi:\cH_1\to\cH_2$ denote the orthogonal projection.
Then $\cC_2^\vee=\pi(\cC_1^\vee)$ and $\cD_2=\pi(\cD_1)$ holds.
\end{lem}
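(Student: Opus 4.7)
The plan is to establish $\cC_2^\vee = \pi(\cC_1^\vee)$ first, and then to deduce $\cD_2 = \pi(\cD_1)$ from it via a brief trace-preservation argument. The starting observation is the trivial identity $\cC_2 = \cC_1 \cap \cH_2$, since positive semidefiniteness of a hermitian matrix is an intrinsic property independent of the ambient real matrix system.

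The main step I would perform is a cone-duality computation inside $\cH_1$. By bipolar duality,
\[
(\cC_1 \cap \cH_2)^\vee_{\cH_1} = \overline{\cC_1^\vee + \cH_2^\vee} = \overline{\cC_1^\vee + \cH_2^\perp},
\]
the last equality using that the dual cone of a subspace is its orthogonal complement. To remove the closure, I would verify $\cC_1^\vee \cap \cH_2^\perp = \{0\}$, whence a routine sequential argument shows $\cC_1^\vee + \cH_2^\perp$ is closed. The intersection is trivial because $\id_n$ lies in the interior of $\cC_1$ in $\cH_1$ (small hermitian perturbations of $\id_n$ remain positive definite): an $A \in \cC_1^\vee$ with $A \perp \cH_2 \ni \id_n$ would satisfy $\langle A, \id_n + \epsilon B\rangle \geq 0$ for all small $B \in \cH_1$, which forces $A = 0$. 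On the other hand, a direct check using $\langle A, B\rangle = \langle \pi(A), B\rangle$ for $B \in \cH_2$ gives $(\cC_2)^\vee_{\cH_1} = \cC_2^\vee + \cH_2^\perp$. Combining,
\[
\cC_2^\vee + \cH_2^\perp = \cC_1^\vee + \cH_2^\perp,
\]
and applying $\pi$ (with $\pi|_{\cH_2} = \idty$ and $\pi(\cH_2^\perp) = 0$) produces $\cC_2^\vee = \pi(\cC_1^\vee)$.

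For the second identity, I would use that $\pi$ preserves the trace: self-adjointness of $\pi$ together with $\pi(\id_n) = \id_n$ yield $\tr \pi(A) = \langle \id_n, \pi(A)\rangle = \langle \pi(\id_n), A\rangle = \tr A$. Hence $\pi(\cD_1) \subseteq \cD_2$, and conversely every $\sigma \in \cD_2 \subseteq \cC_2^\vee = \pi(\cC_1^\vee)$ admits a preimage $\rho \in \cC_1^\vee$ with $\tr \rho = \tr \sigma = 1$, so $\rho \in \cD_1$.

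The step I expect to require the most care is the closedness of $\cC_1^\vee + \cH_2^\perp$, since the sum of a closed convex cone and a subspace need not be closed in general; the leverage here comes from the interior position of $\id_n$, which simultaneously rules out nontrivial directions of $\cC_1^\vee$ inside $\cH_2^\perp$ and so provides the missing transversality.
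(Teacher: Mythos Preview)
Your proof is correct and takes a genuinely different route from the paper's. Both arguments hinge on the interior position of $\id_n$ in $\cC_1$, and both must establish a closedness statement, but they organize the work differently. The paper shows the inclusions $\pi(\cC_1^\vee)\subset\cC_2^\vee\subset[\pi(\cC_1^\vee)]^{\vee\vee}$ directly and then closes the gap by proving that $\pi(\cC_1^\vee)$ itself is closed; it does so by exhibiting $\pi(\cD_1)$ as a compact base of $\pi(\cC_1^\vee)$ (the interior position of $\id_n$ makes $\cD_1$ a compact base of $\cC_1^\vee$ via \cite[Lemma~3.3]{Plaumann-etal2021}, and trace preservation keeps $0$ out of $\aff\pi(\cD_1)$). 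You instead work entirely in the ambient space $\cH_1$: dualizing $\cC_2=\cC_1\cap\cH_2$ there, identifying the result with $\cC_2^\vee+\cH_2^\perp$, and reducing closedness of $\cC_1^\vee+\cH_2^\perp$ to the trivial intersection $\cC_1^\vee\cap\cH_2^\perp=\{0\}$, again forced by the interior point $\id_n$. Your approach is slightly more self-contained, avoiding the external compact-base lemmas; the paper's has the mild structural advantage that the compact base $\pi(\cD_1)$ is exactly the object one wants for the second identity, so $\cD_2=\pi(\cD_1)$ falls out with almost no extra work. The trace-preservation step is identical in both.
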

\begin{proof}
This lemma and its proof are similar to \cite[Prop.~5.2]{Plaumann-etal2021}.
The inclusions $\pi(\cC_1^\vee)\subset\cC_2^\vee$ and 
$\cC_2\supset(\pi(\cC_1^\vee))^\vee$ are straightforward to verify and imply
\[
\pi(\cC_1^\vee)
\subset
\cC_2^\vee
\subset 
[\pi(\cC_1^\vee)]^\vee{}^\vee\,.
\]
Since $[\pi(\cC_1^\vee)]^\vee{}^\vee$ is the closure of the convex cone 
$\pi(\cC_1^\vee)$, it suffices to show that $\pi(\cC_1^\vee)$ is closed. 
By \cite[Lemma~3.1]{Plaumann-etal2021}, this would follow if $\pi(\cD_1)$ 
was a compact base of $\pi(\cC_1^\vee)$. As $\id_n$ lies in the interior 
of $\cC_1$ in the topology of $\cH_1$, we know that $\cD_1$ is a compact 
base of $\cC_1^\vee$, see \cite[Lemma~3.3]{Plaumann-etal2021}. Hence 
$\pi(\cD_1)$ is a compact base of $\pi(\cC_1^\vee)$ provided we establish 
that $0\not\in\aff\pi(\cD_1)$. But this is clear from $\pi$ being 
trace-preserving: 
\[
\tr(\pi(A))
=\langle\id_n,\pi(A)\rangle
=\langle\id_n,A\rangle
=\tr(A)\,,
\quad
A\in\cH_1\,.
\]
This completes the proof of $\cC_2^\vee=\pi(\cC_1^\vee)$. The identity 
$\cC_2^\vee=\pi(\cC_1^\vee)$ and the fact that $\pi$ is trace-preserving 
imply $\cD_2=\pi(\cD_1)$. 
\end{proof}
Lemma~\ref{lem:dualcones} shows that the orthogonal projection 
$\pi:\cR_1\to\cR_2$ restricts to the map 
\begin{equation}\label{eq:pi-red2}
\pi:\cD_1\to\cD_2\,,
\end{equation}
which we call the \emph{orthogonal projection} of $\cD_1$ onto $\cD_2$.
To avoid any possible confusion, the orthogonal projections $\cR_1\to\cR_2$ and 
$\cH_1\to\cH_2$ are written without function labels from here on 
(Lemma~\ref{lem:dir-sum-proj} is an exception). 
\par
\begin{prop}\label{pro:restriction}
Let $\cR_1,\cR_2$ be real matrix systems on $\C^n$ such that 
\mbox{$\cR_2\subset\cR_1$}
and let $\pi:\cD(\cR_1)\to\cD(\cR_2)$ denote the orthogonal projection.
\begin{enumerate}
\item[a)]
The diagram~a) below commutes. Both $\pi:\cD_1\to\cD_2$ and 
$\cS_{\R,1}\to\cS_{\R,2}$, $\ell\mapsto\ell|_{\cR_2}$ are surjective maps. 
\item[b)]
If $\cR_1,\cR_2$ are complex matrix systems on $\C^n$, then the diagram~b) 
below commutes and the maps $\pi:\cD_1\to\cD_2$ and $\cS_1\to\cS_2$, 
$\ell\mapsto\ell|_{\cR_2}$ are onto.
\end{enumerate}
\[
\mbox{a)}\quad
\xymatrix{%
\cD_1 \ar@<-.5ex>[r]_{r_{\R,1}} \ar[d]_{\pi} 
 & \cS_{\R,1} \ar@<-.5ex>[l] \ar[d]^{\ell\mapsto\ell|_{\cR_2}}\\
\cD_2 \ar@<.5ex>[r]^{r_{\R,2}} 
 & \cS_{\R,2} \ar@<.5ex>[l]
}
\qquad\qquad
\mbox{b)}\quad
\xymatrix{%
\cD_1 \ar@<-.5ex>[r]_{r_1} \ar[d]_{\pi} 
 & \cS_1 \ar@<-.5ex>[l] \ar[d]^{\ell\mapsto\ell|_{\cR_2}}\\
\cD_2 \ar@<.5ex>[r]^{r_2} 
 & \cS_2 \ar@<.5ex>[l]
}\]
\end{prop}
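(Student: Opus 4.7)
The plan is to verify the commutativity of both diagrams directly from the defining properties of $\pi$, and then deduce surjectivity of the restriction maps from the surjectivity of $\pi$ (Lemma~\ref{lem:dualcones}) via the bijections $r_{\R,i}$ and $r_i$ of Lemmas~\ref{lem:riesz-real} and~\ref{lem:riesz}.

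For part~a), commutativity of the diagram amounts to the identity
\[
\Re\langle \pi(\rho),B\rangle = \Re\langle \rho,B\rangle\,,
\qquad
\rho\in\cD_1,\ B\in\cR_2,
\]
which is immediate from the defining property $\Re\langle \rho-\pi(\rho),B\rangle=0$ of the orthogonal projection $\pi:\cR_1\to\cR_2$. Surjectivity of $\pi:\cD_1\to\cD_2$ is exactly the statement $\cD_2=\pi(\cD_1)$ in Lemma~\ref{lem:dualcones}. Surjectivity of the restriction $\cS_{\R,1}\to\cS_{\R,2}$ then follows by a one-line diagram chase: given $\ell\in\cS_{\R,2}$, write $\ell=r_{\R,2}(\rho)$ with $\rho\in\cD_2$, lift to some $\sigma\in\cD_1$ with $\pi(\sigma)=\rho$, and use the commutativity just established to conclude that $r_{\R,1}(\sigma)\in\cS_{\R,1}$ restricts to $\ell$ on $\cR_2$.

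Part~b) follows by the same scheme, with $r_i$ in place of $r_{\R,i}$. Commutativity now reads $\langle\pi(\rho),B\rangle=\langle\rho,B\rangle$ for $\rho\in\cD_1$ and $B\in\cR_2$, which is again the defining property of $\pi$, now with respect to the Frobenius inner product (the paper notes after~\eqref{eq:pi-red1} that the real and complex orthogonal projections coincide when $\cR_1,\cR_2$ are complex matrix systems). There is no substantive obstacle: the proposition is essentially bookkeeping that pieces together Lemmas~\ref{lem:riesz-real}, \ref{lem:riesz}, and~\ref{lem:dualcones}. The only mild point of care is that $\pi$ was first introduced on all of $\cR_1$ and only then restricted to $\cH_1\to\cH_2$ and $\cD_1\to\cD_2$, so when testing against an arbitrary $B\in\cR_2$ one invokes the real-orthogonality of $\pi$ on $\cR_1$, not merely its restriction to hermitian elements.
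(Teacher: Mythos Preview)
Your proof is correct and follows essentially the same approach as the paper's: verify commutativity via the defining orthogonality condition $\Re\langle\rho-\pi(\rho),B\rangle=0$, invoke Lemma~\ref{lem:dualcones} for surjectivity of $\pi$, and then deduce surjectivity of the restriction map by a diagram chase using the bijections $r_{\R,i}$ (resp.~$r_i$). The paper's proof is slightly terser but structurally identical.
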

\begin{proof}
a) The horizontal arrows of diagram a) and b) are obtained in 
Lemma~\ref{lem:riesz-real} and Lemma~\ref{lem:riesz}, respectively, and 
$\pi:\cD_1\to\cD_2$ is onto by Lemma~\ref{lem:dualcones}. The diagram~a) 
commutes because for all $\rho\in\cD_1$ and $A\in\cR_2$ we have
\begin{align*}
[r_{\R,2}\circ\pi(\rho)](A)
 =\Re\langle\pi(\rho),A\rangle 
 =\Re\langle\rho,A\rangle
 =[r_{\R,1}(\rho)](A)
 =r_{\R,1}(\rho)|_{\cR_2}(A)\,.
\end{align*}
Therefore and since $\pi:\cD_1\to\cD_2$ is surjective, the map
$\cS_{\R,1}\to\cS_{\R,2}$ is surjective as well. The proof of b) is similar. 
\end{proof}
\begin{cor}\label{cor:restriction}
Let $\cR_1,\cR_2$ be real matrix systems on $\C^n$ such that $\cR_2\subset\cR_1$,
let $\pi:\cD(\cR_1)\to\cD(\cR_2)$ denote the orthogonal projection, and let 
$\rho\in\cD_1$. 
\begin{enumerate}
\item[a)] 
The map $\pi:\cD_1\to\cD_2$ is open at $\rho$ if and only if 
$\cS_{\R,1}\to\cS_{\R,2}$, $\ell\mapsto\ell|_{\cR_2}$ is open at 
$r_{\R,1}(\rho)$.
\item[b)]
If $\cR_1,\cR_2$ are complex matrix systems on $\C^n$, then $\pi:\cD_1\to\cD_2$ 
is open at $\rho$ if and only if $\cS_1\to\cS_2$, $\ell\mapsto\ell|_{\cR_2}$ is 
open at $r_1(\rho)$.
\end{enumerate}
\end{cor}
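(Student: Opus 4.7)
The plan is to reduce both parts of the corollary to a purely topological observation: openness at a point is invariant under pre- and post-composition with homeomorphisms. The key input is the commutativity of the diagrams in Proposition~\ref{pro:restriction}, together with the fact that the horizontal arrows in those diagrams are homeomorphisms.

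First I would argue that $r_{\R,1},r_{\R,2}$ (and in part~b), $r_1,r_2$) are homeomorphisms onto their images. By Lemmas~\ref{lem:riesz-real} and~\ref{lem:riesz}, they are real affine bijections between compact convex subsets of finite-dimensional Euclidean spaces, and such bijections are continuous with continuous inverses. Next, from the commutativity of diagram~a) in Proposition~\ref{pro:restriction} (writing $R_2$ for the restriction map $\cS_{\R,1}\to\cS_{\R,2}$) I get
\[
R_2 = r_{\R,2}\circ\pi\circ r_{\R,1}^{-1}\,,
\qquad
\pi = r_{\R,2}^{-1}\circ R_2\circ r_{\R,1}\,,
\]
and similarly with $r_i$ in place of $r_{\R,i}$ for part~b).

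Finally, I would invoke the elementary fact that if $h:X'\to X$ and $k:Y\to Y'$ are homeomorphisms between subsets of Euclidean spaces (each equipped with its relative topology), then a map $f:X\to Y$ is open at $x\in X$ if and only if $k\circ f\circ h:X'\to Y'$ is open at $h^{-1}(x)$. This follows directly from the definition: a neighborhood $U'$ of $h^{-1}(x)$ corresponds under $h$ to a neighborhood $h(U')$ of $x$, its image $f(h(U'))$ is a neighborhood of $f(x)$ by hypothesis, and applying the homeomorphism $k$ transports this to a neighborhood of $k(f(x))$; the converse direction is symmetric. Applied to $h=r_{\R,1}$, $k=r_{\R,2}$ (or $h=r_1$, $k=r_2$ in the complex case) at the point $\rho\in\cD_1$, this gives both a) and b).

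There is no real obstacle here; the whole content is packaged in Proposition~\ref{pro:restriction} and Lemmas~\ref{lem:riesz-real}--\ref{lem:riesz}. The only point that deserves care is to verify that the affine isomorphisms $r_{\R,\cR},r_\cR$ really are homeomorphisms in the relative topologies used to define openness at a point, but this is immediate since they are linear maps between finite-dimensional real vector spaces restricted to compact convex subsets.
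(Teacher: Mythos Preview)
Your proposal is correct and is essentially the same as the paper's proof, which simply states that the claim ``follows directly from Prop.~\ref{pro:restriction}.'' You have merely unpacked what this means: the commutative diagrams of Prop.~\ref{pro:restriction} together with the fact that the horizontal maps $r_{\R,i}$ (resp.\ $r_i$) from Lemmas~\ref{lem:riesz-real} and~\ref{lem:riesz} are homeomorphisms immediately yield the equivalence of openness at corresponding points.
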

\begin{proof}
This follows directly from Prop.~\ref{pro:restriction}.
\end{proof}
\begin{rem}\label{rem:intersection}
Let $\cR_1,\cR_2$ be real matrix systems on $\C^n$ and let $\cR_2\subset\cR_1$. 
Then $\cD_2\supset\cD_1\cap\cR_2$ holds, but the converse inclusion 
is wrong in general, as Ex.~\ref{ex:ex1a} shows. However, if $\cR_2$ is a real 
*-subalgebra of $\M_n$, then we have 
\begin{equation}\label{eq:D-inter}
\cD_2=\cD_1\cap\cR_2\,.
\end{equation}
Indeed, $\cC_2^\vee=\cC_2$ holds \cite[Thm.~III.2.1]{FarautKoranyi1994} as the 
space of hermitian matrices $\cH_2$ is a Euclidean Jordan algebra with Jordan 
product $A\circ B=\frac{1}{2}(AB+BA)$ and inner product 
$(A,B)\mapsto\Re\langle A,B\rangle$, $A,B\in\cH_2$. So,
$\cC_2^\vee=\cC_2\subset\cC_1\subset\cC_1^\vee$ proves $\cD_2\subset\cD_1$, 
which implies \eqref{eq:D-inter}. See also the Notes to Chapter~6 in 
\cite{AlfsenShultz2003}.
\end{rem}
\begin{exa}\label{ex:ex1a}
Despite $\cR_2\subset\cR_1\subset\M_3$, the inclusions 
$\cD_2\subset\cD_1\subset\cD(\M_3)$ fail if $\cR_2:=\spn_\C(\id_3,Z\oplus 0)$ 
and $\cR_1:=\spn_\C(\id_3,X\oplus 1,Z\oplus 0)$. We have
\[\textstyle
A_\lambda\in\cD_2\Leftrightarrow|\lambda|\leq\frac{3}{2}\,,
\quad
A_\lambda\in\cD_1\Leftrightarrow|\lambda|\leq\sqrt{2}\,,
\quad
A_\lambda\in\cD(\M_3)\Leftrightarrow|\lambda|\leq 1
\]
for $A_\lambda:=(\id_3+\lambda\,Z\oplus 0)/3\in\cR_2$, $\lambda\in\R$.
The second equivalence is obtained by minimizing $\langle A_\lambda,A\rangle$ 
for fixed $\lambda$ over $A\in\cC_1$, that is, by minimizing 
$\langle A_\lambda,\id_3+c_1(X\oplus 1)+c_2Z\oplus 0\rangle
=1+\frac{1}{3}c_1+\frac{2}{3}\lambda c_2$ 
on the unit disk of $\R^2$.
\end{exa}
%
%%%%%%%%%%%%%%%%%%%%%%%%%%%%%%%%%%%%%%%%%%%%%%%%%%%%%%%%%%%%%%%%%%%%%%%%%%%%
%%%%%%%%%%%%%%%%%%%%%%%%%%%%%%%%%%%%%%%%%%%%%%%%%%%%%%%%%%%%%%%%%%%%%%%%%%%%
%%%%%%%%%%%%%%%%%%%%%%%%%%%%%%%%%%%%%%%%%%%%%%%%%%%%%%%%%%%%%%%%%%%%%%%%%%%%
%%%%%%%%%%%%%%%%%%%%%%%%%%%%%%%%%%%%%%%%%%%%%%%%%%%%%%%%%%%%%%%%%%%%%%%%%%%%
%%%%%%%%%%%%%%%%%%%%%%%%%%%%%%%%%%%%%%%%%%%%%%%%%%%%%%%%%%%%%%%%%%%%%%%%%%%%
%
\section{Direct convex sums}
\label{sec:directsums}
This section addresses affinely independent convex sets, their convex hulls,
and maps defined thereon. Let
\[
\Delta_m:=\{(s_1,\dots,s_m)\in\R^m\mid\forall i: s_i\geq0, s_1+\dots+s_m=1\}
\]
denote the probability simplex, and 
\[\textstyle
\Delta_m(\epsilon,s_1,\dots,s_m)
:=\Delta_m\cap\bigoplus_{i=1}^m(s_i-\epsilon,s_i+\epsilon)
\]
the open hypercube of edge length $2\epsilon$ centered at 
$(s_1,\dots,s_m)\in\Delta_m$.
\par
A family of convex subsets $K_1,\dots,K_m$ of a Euclidean space is 
\emph{affinely independent} if every point in their convex hull can be 
expressed by a unique convex combination $s_1x_1+\dots+s_mx_m$. This 
means that $(s_1,\dots,s_m)\in\Delta_m$ is unique and $x_i\in K_i$ is 
unique for all $i$ for which if $s_i>0$, $i=1,\dots,m$. The 
\emph{direct convex sum} \cite{Alfsen1971} of a family $K_1,\dots,K_m$ 
of affinely independent convex sets is defined as their convex hull
\[
K_1\dcs\cdots\dcs K_m:=\conv(K_1\cup\dots\cup K_m)\,.
\]
If $K_1,\dots,K_m$ is a family of affinely independent compact convex sets,
then their direct convex sum is compact \cite[Thm.~17.2]{Rockafellar1970}. 
A compactness argument allows us to describe a base of open neighborhoods.
\par
\begin{lem}\label{lem:base-dcs}
Let $K_1,\dots,K_m$ be affinely independent compact convex subsets of a 
Euclidean space, and let $x_i\in K_i$, $i=1,\dots,m$, and  
$(s_1,\dots,s_m)\in\Delta_m$. Let $I:=\{i\in\{1,\ldots,m\}\mid s_i>0\}$,
$\delta:=\min_{i\in I}s_i$, and define
\[\begin{array}{r@{\hskip2pt}l}
O_I(\epsilon,(A_i)_{i\in I})
:=\{t_1y_1+\dots+t_my_m\mid &
(t_1,\dots,t_m)\in\Delta_m(\epsilon,s_1,\dots,s_m),\\
  & \mbox{$\forall i\colon y_i\in K_i$ and ($i\in I \Rightarrow y_i\in A_i$)} \}
\end{array}\]
for every $\epsilon\in(0,\delta]$ and open set $A_i$ in the relative 
topology of $K_i$ containing $x_i$, $i=1,\dots,m$. Then the family 
$\{O_I(\epsilon,(A_i)_{i\in I})\}$ is a local base of open neighborhoods at 
$s_1x_1+\dots+s_mx_m$ in the relative topology of $K_1\dcs\cdots\dcs K_m$.
\end{lem}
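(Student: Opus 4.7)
My plan is to establish the local base via the canonical coordinate maps on $K := K_1 \dcs \cdots \dcs K_m$. For every $y \in K$, affine independence yields a unique expansion $y = \sum_{j=1}^m t_j(y)\, y_j(y)$ with $(t_1(y),\ldots,t_m(y)) \in \Delta_m$ and $y_j(y) \in K_j$ whenever $t_j(y) > 0$. Each coefficient map $t_j : K \to [0,1]$ is affine: comparing a convex combination of the expansions of two points $y,y' \in K$ with the expansion of $\lambda y + (1-\lambda) y'$, uniqueness (and convexity of each $K_i$) forces $t_j(\lambda y + (1-\lambda) y') = \lambda t_j(y) + (1-\lambda) t_j(y')$, so $t_j$ is continuous. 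Each component map $y_j$ is continuous on $T_j := \{y \in K : t_j(y) > 0\}$: if $y^{(n)} \to y$ in $T_j$, compactness of each $K_i$ lets me pass to a subsequence along which $y_i(y^{(n)}) \to y_i^\ast \in K_i$ for every $i$; taking limits in $y^{(n)} = \sum_i t_i(y^{(n)})\, y_i(y^{(n)})$ and invoking uniqueness of the canonical expansion of $y$ identifies $y_j^\ast = y_j(y)$, so every subsequence has the same limit.

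With these continuity facts, openness of $O := O_I(\epsilon,(A_i)_{i \in I})$ in $K$ follows from the identity
\[
O \;=\; \bigcap_{i=1}^m t_i^{-1}\bigl((s_i-\epsilon,s_i+\epsilon)\bigr) \;\cap\; \bigcap_{i \in I} y_i^{-1}(A_i),
\]
together with the observation that $\epsilon \leq \delta$ forces $(s_i - \epsilon, s_i + \epsilon) \subset (0,\infty)$ for $i \in I$, so the component maps $y_i$ are defined and continuous wherever $t_i(y) \in (s_i - \epsilon, s_i + \epsilon)$. Hence $O$ is a finite intersection of relatively open subsets of $K$, and it contains $x := s_1 x_1 + \cdots + s_m x_m$ by direct inspection.

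It remains to verify that every relative neighborhood $U$ of $x$ in $K$ contains some $O_I(\epsilon,(A_i)_{i \in I})$. I argue by contradiction: with $\epsilon_n = 1/n$ and $A_i^{(n)}$ a shrinking base of relative neighborhoods of $x_i$ in $K_i$ (e.g., metric balls), I select $y^{(n)} \in O_I(\epsilon_n,(A_i^{(n)})_{i \in I}) \setminus U$ with canonical expansion $y^{(n)} = \sum_j t_j^{(n)} y_j^{(n)}$. Then $t_j^{(n)} \to s_j$ for all $j$ and $y_i^{(n)} \to x_i$ for $i \in I$. Compactness of $K_j$ for $j \notin I$ permits extracting a further subsequence along which every $y_j^{(n)}$ converges in $K_j$; since $s_j = 0$ for $j \notin I$, those terms drop out in the limit, so $y^{(n)} \to \sum_{i \in I} s_i x_i = x$, contradicting $y^{(n)} \notin U$ and $U$ open. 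The main subtlety throughout is the continuity of the component maps, because the natural parameterization $(t,y_1,\ldots,y_m) \mapsto \sum_j t_j y_j$ is highly non-injective on the strata where some $t_j$ vanish; the subsequence-plus-uniqueness argument in paragraph one is what restores control on $T_j$.
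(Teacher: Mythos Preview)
Your proof is correct and takes a genuinely different route from the paper's. You work \emph{backward}: you build the coordinate maps $t_j$ (affine on $K$, hence continuous in finite dimensions) and $y_j$ (continuous on $T_j$ by a compactness-plus-uniqueness subsequence argument), then identify each $O_I(\epsilon,(A_i)_{i\in I})$ as a finite intersection of preimages under these maps; the local-base property is handled by a sequential contradiction. The paper instead works \emph{forward} through the surjective parametrization $\beta:\Delta_m\times\prod_iK_i\to K$: it writes $O=\beta(U)$ for an open box $U$ and uses the same uniqueness from affine independence to show $\beta(U)\cap\beta(U^\complement)=\emptyset$, so that $O$ is the complement in $K$ of the compact set $\beta(U^\complement)$; smallness comes from a direct diameter estimate. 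Your approach has the virtue of making the coordinate structure on $K$ explicit and reusable, at the price of two small (standard) technicalities you leave implicit: that a bounded affine function on a finite-dimensional convex set extends continuously to its affine hull, and that $y_i(y^{(n)})$ must be assigned an arbitrary value in $K_i$ whenever $t_i(y^{(n)})=0$ before one can extract convergent subsequences for all $i$. The paper's argument is more self-contained but less modular.
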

\begin{proof}
As $K:=K_1\dcs\cdots\dcs K_m$ is a metric space, it suffices to show that 
$O:=O_I(\epsilon,(A_i)_{i\in I})$ is open and there are arbitrary small 
sets of this form.
\par
Let $B_i:=A_i$ if $i\in I$ and $B_i:=K_i$ if $i\not\in I$, $i=1,\dots,m$. 
Then the set 
\[\textstyle
U:=\Delta_m(\epsilon,s_1,\dots,s_m)
\oplus\bigoplus_iB_i
\]
is open in the relative topology of the compact set 
$\widetilde{K}:=\Delta_m\oplus\bigoplus_iK_i$. The complement 
\[
U^\complement:=\widetilde{K}\setminus U
=\underbrace{\textstyle
\Delta_m(\epsilon,s_1,\dots,s_m)^\complement\oplus\bigoplus_iK_i}_{C:=}
\cup\bigcup_{j\in I}
\underbrace{\textstyle
\Delta_m\oplus(\bigoplus_{i\neq j}K_i)\oplus A_j^\complement}_{C_j:=}
\]
is compact. Hence, its image under the continuous surjective map
\[\textstyle
\beta:\widetilde{K}\to K\,,
\quad
((t_i)_{i=1}^m,(y_i)_{i=1}^m)\mapsto\sum_{i=1}^m t_iy_i
\]
is compact. We prove that $O=\beta(U)$ is open in $K$ by showing that 
$\beta(U)$ is disjoint from $\beta(U^\complement)$. Let
\[
u:=(t_1,\dots,t_m,y_1,\dots,y_m)\in U\,.
\]
Then $\beta(u)\not\in\beta(C)$ follows as the vector $(t_i)_{i=1}^m$ is 
uniquely determined by $\beta(u)$. For every $j\in I$ we have $s_j>0$, 
hence $t_j>0$ by the definition of $O$. Thus, $y_j$ is uniquely determined 
by $\beta(u)$, which proves $\beta(u)\not\in\beta(C_j)$.
\par
Let $|M|:=\sup_{x,y\in M}|y-x|$ denote the diameter of a set $M$. The 
distance of $x:=s_1x_1+\dots+s_mx_m$ from a point $t_1y_1+\dots+t_my_m$ 
in $O$ is bounded by 
\[\textstyle
\sum_i \left( |s_i-t_i| |x_i| + t_i |x_i-y_i| \right)
\leq \epsilon\sum_i
|x_i| + \sum_{i\in I}t_i|A_i| + \epsilon\sum_{i\not\in I}|K_i|\,.
\]
Choosing open sets $(A_i)_{i\in I}$ with diameters at most $\epsilon$, 
we obtain 
\[\textstyle
|O|\leq 2\sup_{y\in O}|x-y|\leq 2\epsilon\left(
\sum_i|x_i| + 1 + \sum_{i\not\in I}|K_i|
\right)\,,
\]
which completes the proof, as the compact sets $K_i$ have finite 
diameters and as $\epsilon$ can be chosen arbitrarily small.
\end{proof}
Let $E_1,\dots,E_m$ be Euclidean spaces. We consider $E_i$ as a 
subspace of the direct sum $\bigoplus_{j=1}^m E_j$ \emph{via} the 
embedding
\[\textstyle
E_i\to\bigoplus_{j=1}^m E_j\,,
\quad 
x\mapsto (\underbrace{0,\dots,0}_{\makebox[0pt]{\scriptsize $i-1$ zeros}},x,
\underbrace{0,\dots,0}_{\makebox[0pt]{\scriptsize $m-i$ zeros}})\,,
\quad
i=1,\dots,m\,. 
\] 
\par
\begin{prop}\label{pro:dcs-map}
Let $K_i\subset E_i$ and $L_i\subset F_i$ be compact convex subsets of Euclidean 
spaces $E_i$ and $F_i$, $i=1,\dots,m$, such that $K_1,\dots,K_m$ are affinely 
independent in $\bigoplus_{i=1}^mE_i$ and $L_1,\dots,L_m$ are affinely 
independent in $\bigoplus_{i=1}^mF_i$. If $f_i:K_i\to L_i$ is a map, 
$i=1,\dots,m$, then a map 
\begin{equation}\label{eq:dcs-map}\textstyle
f_1\dcs\cdots\dcs f_m:K_1\dcs\cdots\dcs K_m \to L_1\dcs\cdots\dcs L_m
\end{equation}
is well defined by
\[\textstyle
s_1x_1+\dots+s_mx_m \mapsto s_1f_1(x_1)+\dots+s_mf_m(x_m)\,.
\]
If $f_i$ is open and surjective for $i=1,\dots,m$, then $f_1\dcs\cdots\dcs f_m$ 
is open.
\end{prop}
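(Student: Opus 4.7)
The plan is to verify well-definedness and then compare the standard bases of neighborhoods on the two sides using Lemma~\ref{lem:base-dcs}.

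For well-definedness, I would note that, by the affine independence of $K_1, \ldots, K_m$, every $x \in K_1 \dcs \cdots \dcs K_m$ determines the coefficient vector $(s_1, \dots, s_m) \in \Delta_m$ uniquely, and determines $x_i \in K_i$ uniquely whenever $s_i > 0$. Since terms with $s_i = 0$ contribute $0 \cdot f_i(x_i) = 0$ on the image side, the formula $s_1 x_1 + \cdots + s_m x_m \mapsto s_1 f_1(x_1) + \cdots + s_m f_m(x_m)$ does not depend on the choice of $x_i$ for $i$ with $s_i = 0$, so $f := f_1 \dcs \cdots \dcs f_m$ is well defined.

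To prove openness, I would fix $x = s_1 x_1 + \cdots + s_m x_m \in K := K_1 \dcs \cdots \dcs K_m$, set $I := \{i : s_i > 0\}$ and $\delta := \min_{i \in I} s_i$, and show that every basic neighborhood $O_I(\epsilon, (A_i)_{i \in I})$ from Lemma~\ref{lem:base-dcs} has open image. Here $\epsilon \in (0, \delta]$ and each $A_i$ is an open neighborhood of $x_i$ in $K_i$ for $i \in I$. The key computation is to show the identity
\[
f\bigl(O_I(\epsilon, (A_i)_{i \in I})\bigr) = O_I\bigl(\epsilon, (f_i(A_i))_{i \in I}\bigr),
\]
where the right-hand set is formed in $L := L_1 \dcs \cdots \dcs L_m$. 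The inclusion $\subset$ is immediate from the definition of $f$. For $\supset$, given a point $t_1 z_1 + \cdots + t_m z_m$ on the right, I would choose preimages: for $i \in I$, use that $z_i \in f_i(A_i)$ to pick $y_i \in A_i$ with $f_i(y_i) = z_i$; for $i \notin I$, use surjectivity of $f_i$ to pick any $y_i \in K_i$ with $f_i(y_i) = z_i$. Then $t_1 y_1 + \cdots + t_m y_m$ lies in $O_I(\epsilon, (A_i)_{i \in I})$ and maps to the given point.

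With the identity in hand, openness of each $f_i$ for $i \in I$ makes each $f_i(A_i)$ open in $L_i$ containing $f_i(x_i)$, so $O_I(\epsilon, (f_i(A_i))_{i \in I})$ is a basic open neighborhood of $f(x) = s_1 f_1(x_1) + \cdots + s_m f_m(x_m)$ in $L$ by applying Lemma~\ref{lem:base-dcs} on the target side (using that the coefficient vector and index set $I$ of $f(x)$ in $L$ are again $(s_1, \dots, s_m)$ and $I$, by affine independence of $L_1, \dots, L_m$). Since the sets $O_I(\epsilon, (A_i)_{i \in I})$ form a local base at $x$, this shows $f$ is open at $x$, and hence open. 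The main point to be careful about is the role of surjectivity: it is precisely what guarantees that letting the $y_i$ with $i \notin I$ range freely over $K_i$ covers all of $L_i$ on the image side, which is essential because the basic neighborhoods in $L$ at $f(x)$ place no restriction on the $z_i$ for $i \notin I$.
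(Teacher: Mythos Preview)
Your proof is correct and follows essentially the same approach as the paper: both use Lemma~\ref{lem:base-dcs} to obtain the key identity $f\bigl(O_I(\epsilon,(A_i)_{i\in I})\bigr)=O_I\bigl(\epsilon,(f_i(A_i))_{i\in I}\bigr)$, with surjectivity of the $f_i$ handling the indices $i\notin I$ and openness of the $f_i$ ensuring the right-hand side is open. You simply spell out more of the details (the two inclusions, the role of the index set $I$ on the target side) that the paper leaves implicit.
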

\begin{proof}
The map is well defined as the sets $K_1,\dots,K_m$ are affinely independent
and the sets $L_1,\dots,L_m$ are affinely independent. Regarding the openness
of $f_1\dcs\cdots\dcs f_m$, it suffices to show that the images of the members 
of a base of the relative topology of $K_1\dcs\cdots\dcs K_m$ are open. 
Using the base of Lemma~\ref{lem:base-dcs}, we have
\[
f_1\dcs\cdots\dcs f_m[O_I(\epsilon,(A_i)_{i\in I})]
=O_I[\epsilon,(f(A_i))_{i\in I}]\,,
\]
which is open in $L_1\dcs\cdots\dcs L_m$. The required identity of $f_i(K_i)=L_i$ 
for every $i\not\in I$ is a consequence of $f_i$ being surjective.
\end{proof}
We call the map \eqref{eq:dcs-map} the \emph{direct convex sum} of the maps 
$f_1,\dots,f_m$. Next, we recall a sufficient condition for the affine 
independence of convex sets \cite{ClausingPapadopoulou1978}.
\par
\begin{rem}\label{rem:embedded-aff-ind}
Let $K_i$ be a convex subset of a Euclidean space $E_i$ such that 
$0\not\in\aff K_i$ holds for $i=1,\dots,m$. Then $K_1,\dots,K_m$ 
are affinely independent in the direct sum $\bigoplus_{i=1}^m E_i$.
\end{rem}
Returning to matrix systems, we consider a real matrix system $\cR_i$ 
on $\C^{n_i}$, $i=1,\dots,m$. The direct sum $\cR:=\bigoplus_{i=1}^m\cR_i$ is 
a real matrix system on $\C^{n_1+\dots+n_m}$. The Frobenius inner product of 
$(A_i)_{i=1}^m,(B_i)_{i=1}^m\in\cR$ is 
$\langle (A_i)_{i=1}^m,(B_i)_{i=1}^m\rangle=\sum_{i=1}^m\langle A_i,B_i\rangle$.
\par
\begin{lem}\label{lem:dir-sum}
Let $\cR_i$ be a real matrix system on $\C^{n_i}$, $i=1,\ldots,m$. Then 
\begin{align*}
\cC(\cR_1\oplus\dots\oplus\cR_m) &=\cC(\cR_1)\oplus\dots\oplus\cC(\cR_m)\,,\\
\cC^\vee(\cR_1\oplus\dots\oplus\cR_m) 
 &=\cC^\vee(\cR_1)\oplus\dots\oplus\cC^\vee(\cR_m)\,,\\
\text{and}\quad
\cD(\cR_1\oplus\dots\oplus\cR_m) &=\cD(\cR_1)\dcs\cdots\dcs\cD(\cR_m)\,.
\end{align*}
\end{lem}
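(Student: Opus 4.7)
The plan is to establish the three displayed identities in order, since each naturally feeds into the next.

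For the first identity $\cC(\cR_1\oplus\dots\oplus\cR_m)=\cC(\cR_1)\oplus\dots\oplus\cC(\cR_m)$, I would simply invoke the elementary fact that a block-diagonal hermitian matrix $A_1\oplus\dots\oplus A_m$ is positive semidefinite in $\M_{n_1+\dots+n_m}$ if and only if each diagonal block $A_i$ is positive semidefinite in $\M_{n_i}$. There is nothing deeper here.

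For the second identity, I would use the bilinearity $\langle A_1\oplus\dots\oplus A_m,\,B_1\oplus\dots\oplus B_m\rangle=\sum_i\langle A_i,B_i\rangle$ together with the first identity. The inclusion $\supset$ is immediate term by term. For $\subset$, given $A=A_1\oplus\dots\oplus A_m\in\cC^\vee(\bigoplus_i\cR_i)$, I would test $A$ against the block-matrices $0\oplus\cdots\oplus B_j\oplus\cdots\oplus 0$ for $B_j\in\cC(\cR_j)$; the first identity ensures these lie in $\cC(\bigoplus_i\cR_i)$, and nonnegativity forces $A_j\in\cC^\vee(\cR_j)$.

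For the third identity, I would start from $\rho=\rho_1\oplus\dots\oplus\rho_m\in\cD(\bigoplus_i\cR_i)$, where the block decomposition is legitimate by the second identity. Setting $s_i:=\tr(\rho_i)=\langle\id_{n_i},\rho_i\rangle$, the nonnegativity $s_i\geq 0$ follows from $\id_{n_i}\in\cC(\cR_i)$ combined with $\rho_i\in\cC^\vee(\cR_i)$, while $\sum_i s_i=\tr(\rho)=1$ by hypothesis. Defining $\sigma_i:=\rho_i/s_i\in\cD(\cR_i)$ whenever $s_i>0$ (and choosing $\sigma_i\in\cD(\cR_i)$ arbitrarily otherwise), I recover $\rho=\sum_i s_i\sigma_i$ through the direct-sum embeddings. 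Conversely any such convex combination visibly lies in $\cD(\bigoplus_i\cR_i)$.

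The one point requiring a citation is well-definedness of the direct convex sum, i.e.\ affine independence of $\cD(\cR_1),\dots,\cD(\cR_m)$ inside $\bigoplus_i\cR_i$. This is immediate from Rem.~\ref{rem:embedded-aff-ind}: each $\cD(\cR_i)$ lies in the affine hyperplane $\{A\in\cH(\cR_i):\tr(A)=1\}$, which does not contain $0$, so $0\notin\aff\cD(\cR_i)$. I do not anticipate a genuine obstacle; the only mild subtlety is handling the edge case $s_i=0$ when writing $\rho$ as a convex combination, which the freedom to pick any $\sigma_i\in\cD(\cR_i)$ dispatches.
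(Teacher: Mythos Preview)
Your proposal is correct and follows essentially the same approach as the paper, which dispatches the first identity as ``clear,'' obtains the second by induction on $m$ from the easy case $m=2$, cites Rem.~\ref{rem:embedded-aff-ind} (exactly as you do) for affine independence via the trace-one hyperplane, and then says the third identity ``follows from the second one by enforcing the trace to be one.'' Your version is simply more explicit---writing out the test vectors $0\oplus\cdots\oplus B_j\oplus\cdots\oplus 0$ and handling the $s_i=0$ edge case---but the underlying argument is the same.
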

\begin{proof}
The first identity is clear. The second one follows by induction from $m=2$,
a case that is easy to verify. By Rem.~\ref{rem:embedded-aff-ind} and because 
$\cD(\cR_i)$ is included in the hyperplane of trace-one matrices $i=1,\dots,m$, 
the sets $\cD(\cR_1),\dots,\cD(\cR_m)$ are affinely independent in the direct 
sum $\bigoplus_{i=1}^m\cH(\cR_i)$. The third identity follows from the second 
one by enforcing the trace to be one. 
\end{proof}
\begin{exa}\label{ex:ex1b1}
Let $\cR:=\spn_\R(\id_3,X\oplus 1,Z\oplus 0)$.
\begin{enumerate}
\item[a)] 
The set of density matrices $\cD(\M_2\oplus\M_1)$ of the *-sub\-al\-ge\-bra 
$\M_2\oplus\M_1$ of $\M_3$ is a symmetric cone that is the direct convex 
sum of the Bloch ball $\cD(\M_2)$ and the singleton $\cD(\M_1)=\{1\}$, see 
Lemma~\ref{lem:dir-sum} and Ex.~\ref{ex:Bloch}. The closed segment 
\[\textstyle
\cG:=
\big[\ket{+}\!\!\bra{+}\oplus0,0\oplus 1\big]
=\big\{(1-\lambda)\ket{+}\!\!\bra{+}\oplus\lambda
 \colon \lambda\in[0,1] \big\}
\] 
is a generatrix of this cone. 
One proves along the lines of Ex.~\ref{ex:ex1} that the orthogonal projection 
\[\textstyle
\pi:\cD(\M_2\oplus\M_1)\to\cD(\cR)
\]
is not open at any point in the half-open segment 
\[\textstyle
\cG_0
:=\big(\ket{+}\!\!\bra{+}\oplus0,0\oplus 1\big]
=\cG\setminus\{\ket{+}\!\!\bra{+}\oplus 0\}\,.
\] 
Thereby, the equivalence of states and density matrices is described in 
Coro.~\ref{cor:restriction}. It is important to observe that 
$\frac{1}{2}(\id_2+\cos(\theta)X+\sin(\theta)Z)\oplus 0$ and 
$\cos(\theta)(X\oplus 1)+\sin(\theta)Z\oplus 0$ are matrices in $\M_2\oplus\M_1$ 
for all $\theta\in\R$. The map $\pi$ is open at every point in the complement 
of $\cG_0$ because $\cD(\M_2\oplus\M_1)$ is a cone over a ball, see
\cite[Lemma~4.17]{Weis2014} for a detailed proof.
\item[b)]
The orthogonal projection $\pi:\cD(\M_2\oplus\M_1)\to\cD(\cR)$ has an
instructive geometry. The set of density matrices
$\cD(\cR)=\pi\big(\cD(\M_2)\dcs\{1\}\big)$ is the convex hull of the 
projected ball $\pi(\cD(\M_2)\oplus\{0\})$ and the singleton 
$\pi(0\oplus 1)$ by Lemma~\ref{lem:dualcones}. In turn, 
$\pi(\cD(\M_2)\oplus\{0\})$ is the filled ellipse of all points
\begin{align}
\label{eq:piR}
&\textstyle 
\hphantom{{}={}}\pi\big(\frac{1}{2}(\id_2+c_XX+c_YY+c_ZZ)\oplus 0\big)\\
\nonumber
&\textstyle 
=\frac{1}{2}\big(%
\id_3-M
+c_X(3M-\id_3)
+c_ZZ\oplus0\big)\,,
\end{align}
where $c_X,c_Y,c_Z\in\R$ satisfy $c_X^2+c_Y^2+c_Z^2=1$, and 
$M:=\frac{1}{2}(\ket{+}\!\!\bra{+}\oplus 1)$ is the midpoint of the generatrix 
$\cG$. Note that $\id_3$, $(3X-\id_2)\oplus2$, $Z\oplus 0$ is an orthogonal 
basis of $\cR$. The choice of $c_X=1,c_Y=c_Z=0$ yields
\begin{equation}\label{eq:fiberM2M1}\textstyle
\pi(\ket{+}\!\!\bra{+}\oplus0)=M=\pi(0\oplus 1)\,.
\end{equation}
Thus, $\cG$ is perpendicular to $\cR$, and  $\cD(\cR)=\pi(\cD(\M_2)\oplus\{0\})$ 
is an ellipse. Moreover, $\pi^{-1}(M)=\cG$ holds by equation~\eqref{eq:piR} 
and~\eqref{eq:fiberM2M1}. 
\item[c)]
The ellipse $\cD(\cR)$ has the semiaxes $\sqrt{3/8}$ and $1/\sqrt{2}$. This
follows from the formula \eqref{eq:piR} when $(c_X,c_Y,c_Z)$ is assigned the
values of $(\pm1,0,0)$ and $(0,0,\pm1)$. In retrospect to Rem.~\ref{ex:ex1a}, 
the value $(1/3,0,\sqrt{8/9})$ confirms that $(\id_3+\lambda\,Z\oplus 0)/3$ 
is contained in $\cD(\cR)$ if and only if $|\lambda|\leq\sqrt{2}$. 
\end{enumerate}
\end{exa}
It is instructive to look at Ex.~\ref{ex:ex1b1} from the perspective of a
real *-subalgebra of $\M_3$, whose state space can be visualized in 
three-space.
\par
\begin{exa}\label{ex:ex1b2}
The set of density matrices $\cD(\M_2(\R)\oplus\M_1(\R))$ of the real 
*-subalgebra $\M_2(\R)\oplus\M_1(\R)$ of $\M_3$ is a symmetric cone, which 
is the direct convex sum of the great disk $\cD(\M_2(\R))$ of the Bloch ball 
and a singleton. As in Ex.~\ref{ex:ex1b1}, the closed segment $\cG$ is a 
generatrix of this cone, which is the fiber of the orthogonal 
projection 
\[\textstyle
\pi:\cD(\M_2(\R)\oplus\M_1(\R))\to\cD(\cR)
\]
over $M$. The map $\pi$ is not open at any point in 
$\cG_0=\cG\setminus\{\ket{+}\!\!\bra{+}\oplus 0\}$ and open at every point 
in the complement of $\cG_0$. The lack of openness can be visualized 
graphically in three-space by observing that $\pi$ projects the cone 
$\cD(\M_2(\R)\oplus\M_1(\R))$ along its generatrix $\cG$ to the ellipse 
$\cD(\cR)$. 
\end{exa}
\begin{lem}\label{lem:dir-sum-proj}
Let $\cR_i,\cR_i'$ be real matrix systems on $\C^{n_i}$ such that
$\cR_i\subset\cR_i'$, let $\pi_i:\cR_i'\to\cR_i$ denote the orthogonal 
projection, $i=1,\ldots,m$, and consider the direct sums 
$\cR':=\bigoplus_{i=1}^m\cR_i'$ and $\cR:=\bigoplus_{i=1}^m\cR_i$. The 
orthogonal projection $\pi:\cD(\cR')\to\cD(\cR)$ equals
\[
\pi_1\dcs\cdots\dcs\pi_m:
\cD(\cR_1')\dcs\cdots\dcs\cD(\cR_m')
\to\cD(\cR_1)\dcs\cdots\dcs\cD(\cR_m)\,.
\]
\end{lem}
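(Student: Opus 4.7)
The plan is to reduce the statement to two observations: that orthogonal projection decomposes over orthogonal direct sums, and that the density-matrix sets decompose as direct convex sums via Lemma~\ref{lem:dir-sum}. With these in hand, the identity becomes a straightforward computation on convex combinations.

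First I would exploit that the Frobenius inner product on $\cR'=\bigoplus_{i=1}^m\cR_i'$ is the orthogonal sum of the Frobenius inner products on the summands $\cR_i'$, and that $\cR=\bigoplus_{i=1}^m\cR_i$ is the corresponding orthogonal direct sum of the $\cR_i$. Consequently, the map $\cR'\to\cR$ sending $(A_i)_{i=1}^m\mapsto(\pi_i(A_i))_{i=1}^m$ is $\R$-linear, has range $\cR$, and its defining equations $\Re\langle A_i-\pi_i(A_i),B_i\rangle=0$ for all $B_i\in\cR_i$ sum up to $\Re\langle(A_i)-(\pi_i(A_i)),(B_i)\rangle=0$ for all $(B_i)\in\cR$. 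By uniqueness of the orthogonal projection, this componentwise map coincides with $\pi:\cR'\to\cR$.

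Next I would restrict to density matrices. Lemma~\ref{lem:dir-sum} gives $\cD(\cR')=\cD(\cR_1')\dcs\cdots\dcs\cD(\cR_m')$ and $\cD(\cR)=\cD(\cR_1)\dcs\cdots\dcs\cD(\cR_m)$, the affine independence being guaranteed by Rem.~\ref{rem:embedded-aff-ind} since each $\cD(\cR_i^{(\prime)})$ lies in the trace-one hyperplane. An arbitrary element $\rho\in\cD(\cR')$ can thus be written as $\rho=s_1\rho_1+\cdots+s_m\rho_m$ with $(s_1,\dots,s_m)\in\Delta_m$ and $\rho_i\in\cD(\cR_i')$, where each $\rho_i$ is viewed inside $\cR'$ via the embedding into the $i$-th summand. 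Applying the componentwise formula for $\pi$ and linearity, I obtain
\[
\pi(\rho)=\sum_{i=1}^m s_i\,\pi_i(\rho_i),
\]
where each $\pi_i(\rho_i)\in\cD(\cR_i)$ by Lemma~\ref{lem:dualcones}. This is exactly the value of $\pi_1\dcs\cdots\dcs\pi_m$ at $\rho$, as defined in Prop.~\ref{pro:dcs-map}; note the map $\pi_1\dcs\cdots\dcs\pi_m$ is well defined precisely because both families $\cD(\cR_i')$ and $\cD(\cR_i)$ are affinely independent.

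There is no serious obstacle; the only care needed is to check that the componentwise projection really is the orthogonal projection on $\cR'$ (i.e., to invoke the orthogonal decomposition of the inner product), and to keep track of the embeddings $\cH(\cR_i)\hookrightarrow\cH(\cR)$ when passing between the direct-sum and direct-convex-sum pictures.
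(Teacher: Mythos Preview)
Your proof is correct and follows essentially the same approach as the paper: verify that the orthogonal projection $\cR'\to\cR$ is the direct sum $\bigoplus_i\pi_i$, invoke Lemma~\ref{lem:dir-sum} for the direct convex sum decompositions of $\cD(\cR')$ and $\cD(\cR)$, and then identify the resulting action on convex combinations with the definition of $\pi_1\dcs\cdots\dcs\pi_m$ from Prop.~\ref{pro:dcs-map}. You spell out the details (the defining equations for the projection and the invocation of Lemma~\ref{lem:dualcones} to ensure $\pi_i(\rho_i)\in\cD(\cR_i)$) more explicitly than the paper, which simply calls the computation ``straight-forward,'' but the logic is identical.
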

\begin{proof}
The orthogonal projections $\pi_i:\cD(\cR_i')\to\cD(\cR_i)$, $i=1,\ldots,m$, 
and $\pi:\cD(\cR')\to\cD(\cR)$ are well defined by Lemma~\ref{lem:dualcones}. 
A straight-forward computation shows that the orthogonal projection 
$\pi:\cR'\to\cR$ is the direct sum $\bigoplus_{i=1}^m\pi_i$. The claim then 
follows from the third identity of Lemma~\ref{lem:dir-sum} and from the 
definition of the direct convex sum of maps in formula 
\eqref{eq:dcs-map}.
\end{proof}
%
%%%%%%%%%%%%%%%%%%%%%%%%%%%%%%%%%%%%%%%%%%%%%%%%%%%%%%%%%%%%%%%%%%%%%%%%%%%%
%%%%%%%%%%%%%%%%%%%%%%%%%%%%%%%%%%%%%%%%%%%%%%%%%%%%%%%%%%%%%%%%%%%%%%%%%%%%
%%%%%%%%%%%%%%%%%%%%%%%%%%%%%%%%%%%%%%%%%%%%%%%%%%%%%%%%%%%%%%%%%%%%%%%%%%%%
%%%%%%%%%%%%%%%%%%%%%%%%%%%%%%%%%%%%%%%%%%%%%%%%%%%%%%%%%%%%%%%%%%%%%%%%%%%%
%%%%%%%%%%%%%%%%%%%%%%%%%%%%%%%%%%%%%%%%%%%%%%%%%%%%%%%%%%%%%%%%%%%%%%%%%%%%
%
\section{Numerical ranges}
\label{sec:numerical-ranges}
The orthogonal projection $\cD(\M_n)\to\cD(\cR)$ may be restricted to the set 
$\ex\cD(\M_n)$ of extreme points\footnote{%
An point in a convex set $K$ is an \emph{extreme point} \cite{Rockafellar1970} 
of $K$ if there is no way to express it as a convex combination  
$(1-s)x+sy$ such that $x,y\in K$ and $s\in(0,1)$, except by taking $x=y$.
We denote the set of extreme points of $K$ by $\ex K$.} 
of $\cD(\M_n)$. The openness of this restriction was studied in matrix theory 
\cite{Corey-etal2013,Leake-etal2014-preimages,Leake-etal2014-inverse,
Leake-etal2016-inverse,LinsParihar2016,LinsSpitkovsky2020,
SpitkovskyWeis2016,SpitkovskyWeis2018,Weis2016} for two-dimensional 
state spaces $\cD(\cR)$ represented as numerical ranges
(see Rem.~\ref{rem:eigenfunctions}). 
\par
Let $A_1,\ldots,A_k\in\cH(\M_n)$ and consider the real matrix system 
\[\textstyle
\cR(A_1,\ldots,A_k):=\spn_\R(\id_n,A_1,\ldots,A_k)\,.
\]
The image of $\cD(\M_n)$ under the real linear map 
\[\textstyle
v:\cH(\M_n)\to\R^k\,, 
\quad
B\mapsto(\langle B,A_1\rangle,\dots,\langle B,A_k\rangle)\tp
\]
is the \emph{joint numerical range} $V(A_1,\ldots,A_k):=v(\cD(\M_n))\subset\R^k$,
see \cite{BonsallDuncan1971}. Aware of the notational imprecision, we use the 
same label $v$ also for several restrictions of $v$, among others for
\begin{align}
\label{eq:proj-jnr2}
v:\cD(\M_n)&\to V(A_1,\ldots,A_k)\,, 
& \rho & \mapsto(\langle\rho,A_1\rangle,\dots,\langle\rho,A_k\rangle)\tp\,.
\end{align}
\par
\begin{lem}\label{lem:jnr}
The following diagram commutes.
\[\textstyle
\xymatrix{%
 & \cD(\M_n) \ar[d]^\pi \ar@<-1.0ex>[dl]_v \\
*+[l]{V(A_1,\ldots,A_k)} \ar@<-.5ex>[r]
 & *+[r]{\cD(\cR(A_1,\ldots,A_k))} \ar@<-.5ex>[l]_v
}\]
\end{lem}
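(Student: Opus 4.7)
The plan is to verify two things: commutativity of the triangle formed by $\pi$ and the two copies of $v$, and then that the pair of arrows between $V(A_1,\ldots,A_k)$ and $\cD(\cR(A_1,\ldots,A_k))$ consists of mutually inverse affine bijections, as is conveyed by the doubled arrowheads.

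For the commutativity, I would argue as follows. Fix $\rho\in\cD(\M_n)$. Each matrix $A_i$ belongs to $\cR(A_1,\ldots,A_k)$, so by the defining property of the orthogonal projection on the hermitian parts (Section~\ref{sec:states}), one has $\langle \rho-\pi(\rho),A_i\rangle=0$ for $i=1,\ldots,k$. Here the pairing is real because $\rho-\pi(\rho)$ and $A_i$ are both hermitian, so the complex and real orthogonality conditions coincide and no imaginary parts enter. Consequently $\langle\pi(\rho),A_i\rangle=\langle\rho,A_i\rangle$, and assembling the $k$ coordinates yields $v(\pi(\rho))=v(\rho)$, which is exactly the commutativity of the triangle.

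Next, I would show that the restriction $v:\cD(\cR(A_1,\ldots,A_k))\to V(A_1,\ldots,A_k)$ is an affine bijection; its inverse then supplies the unlabeled arrow $V(A_1,\ldots,A_k)\to\cD(\cR(A_1,\ldots,A_k))$. Affinity and continuity are immediate from the definition. Surjectivity is obtained by a lifting argument: given $w\in V(A_1,\ldots,A_k)$, pick $\sigma\in\cD(\M_n)$ with $v(\sigma)=w$; by Lemma~\ref{lem:dualcones}, $\pi(\sigma)\in\cD(\cR(A_1,\ldots,A_k))$, and the commutativity just established gives $v(\pi(\sigma))=v(\sigma)=w$. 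For injectivity, suppose $\rho,\sigma\in\cD(\cR(A_1,\ldots,A_k))$ satisfy $v(\rho)=v(\sigma)$. Then $\langle \rho-\sigma,A_i\rangle=0$ for each $i$, and also $\langle \rho-\sigma,\id_n\rangle=\tr(\rho)-\tr(\sigma)=0$; thus $\rho-\sigma$ is orthogonal to the real spanning set $\{\id_n,A_1,\ldots,A_k\}$ of $\cR(A_1,\ldots,A_k)$. Since $\rho-\sigma$ itself lies in $\cR(A_1,\ldots,A_k)$, it must vanish.

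There is no substantial obstacle; the argument is a direct verification. The only point that demands a moment of care is the compatibility between the real orthogonal projection defining $\pi$ and the complex Frobenius pairing used in the definition of $v$, which is resolved by restricting attention to hermitian arguments, where the pairing is automatically real.
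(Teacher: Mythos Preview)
Your proof is correct and follows essentially the same route as the paper's: both establish $v=v\circ\pi$ via orthogonality of $\rho-\pi(\rho)$ to each $A_i\in\cR$, and both verify injectivity of $v$ on the trace-one slice of $\cH(\cR(A_1,\ldots,A_k))$ by the same orthogonality-to-a-spanning-set argument. The only cosmetic difference is that the paper cites surjectivity of $\pi$ (Lemma~\ref{lem:dualcones}) up front and deduces surjectivity of $v|_{\cD(\cR)}$ from the factoring, whereas you phrase this as an explicit lifting; the content is identical.
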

\begin{proof}
Let $\cR:=\cR(A_1,\ldots,A_k)$. By Lemma~\ref{lem:dualcones}, the orthogonal 
projection $\pi:\cD(\M_n)\to\cD(\cR)$ is surjective. It is straightforward to 
verify that $v:\cH(\M_n)\to\R^k$ factors through $\cR$, in the sense 
that $v=v\circ\pi$ holds. Hence, the commutativity of the diagram is 
implied by the injectivity of $v$ restricted to the affine space 
$\{B\in\cH(\cR):\tr(B)=1\}$. Let $B_1,B_2$ be contained in this affine 
space. If $v(B_1)=v(B_2)$, then 
\begin{align*}
0 &= v(B_1-B_2)=\langle B_1-B_2,A_i\rangle_{i=1}^k\\
\text{and}\quad
0 &= \tr(B_1)-\tr(B_2)=\langle B_1-B_2,\id_n\rangle\,.
\end{align*}
This implies $B_1-B_2=0$ as 
$B_1-B_2\in\cH(\cR)=\spn_\R(\id_n,A_1,\ldots,A_k)$.
\end{proof}
In the remainder of this section, let $k=2$ and $A:=A_1+\ii A_2$. The image 
of the unit sphere $\C S^n:=\{\ket{\varphi}\in\C^n:\braket{\varphi|\varphi}=1\}$ 
under the hermitian quadratic form 
$f_A:\C^n\to\C$, $\ket{\varphi}\mapsto\braket{\varphi|A\varphi}$ is the 
\emph{numerical range} 
\[\textstyle
W(A):=f_A(\C S^n)\subset\C\,.
\]
Here, 
$\braket{\varphi_1|\varphi_2}:=\overline{x_1}y_1+\cdots+\overline{x_n}y_n$ 
is the inner product of $\ket{\varphi_1}=(x_1,\ldots,x_n)\tp$ and 
$\ket{\varphi_2}=(y_1,\ldots,y_n)\tp$ in $\C^n$. We use the same label 
$f_A$ to denote the map
\[
f_A:\C S^n\to W(A)\,,
\quad 
\ket{\varphi}\mapsto\braket{\varphi|A\varphi}\,.
\]
\par
Minkowski's theorem \cite{Schneider2014} asserts that every compact convex set 
is the convex hull of its extreme points.
\par
\begin{prop}\label{pro:nr}
The following diagram commutes. 
\[\textstyle
\xymatrix{%
\C S^n \ar[rr]^-{\ket{\varphi}\mapsto\ket{\varphi}\!\bra{\varphi}} \ar[d]_{f_A} 
 && \ex\cD(\M_n) \ar@<0.0ex>[d]_v \ar@<0.0ex>[r]^-{\rho\mapsto\rho}
 & *+[r]{\cD(\M_n)} \ar@<0.0ex>[dl]_v \ar[d]^\pi\\
W(A) \ar@<.5ex>[rr] 
 && V(A_1,A_2) \ar@<-.5ex>[r] \ar@<.5ex>[ll]_-%
{\raisebox{10pt}{\scriptsize$z\mapsto
\left(\begin{smallmatrix}\Re(z)\\\Im(z)\end{smallmatrix}\right)$}}
 & *+[r]{\cD(\cR(A_1,A_2))} \ar@<-.5ex>[l]_-v
}\]
\end{prop}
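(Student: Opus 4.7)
My plan is to verify the diagram by checking well-definedness of each node and map and then the commutativity of each elementary sub-region. The maps $v$, $f_A$, $\pi$, the inclusion $\ex\cD(\M_n)\hookrightarrow\cD(\M_n)$, and the real linear isomorphism $z\mapsto(\Re(z),\Im(z))\tp:\C\to\R^2$ are manifestly well defined; the only map needing a brief comment is $\ket{\varphi}\mapsto\ket{\varphi}\!\bra{\varphi}$ landing in $\ex\cD(\M_n)$. This is a standard spectral fact: any density matrix decomposes as a convex combination of its rank-one spectral projections, and a rank-one projection itself admits no nontrivial such decomposition, so the extreme points of $\cD(\M_n)$ are exactly the $\ket{\varphi}\!\bra{\varphi}$ with $\ket{\varphi}\in\C S^n$.

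I would then justify the two double (bijective) arrows at the bottom. The right arrow $V(A_1,A_2)\leftrightarrow\cD(\cR(A_1,A_2))$ is exactly the bijection furnished by Lemma~\ref{lem:jnr}, obtained there from the injectivity of $v$ on the trace-one affine slice of $\cH(\cR(A_1,A_2))$. For the left arrow $W(A)\leftrightarrow V(A_1,A_2)$, a direct computation using $A=A_1+\ii A_2$ with $A_1,A_2\in\cH(\M_n)$ gives
\[
v(\ket{\varphi}\!\bra{\varphi})=\big(\braket{\varphi|A_1\varphi},\braket{\varphi|A_2\varphi}\big)\tp
\quad\text{and}\quad
f_A(\ket{\varphi})=\braket{\varphi|A_1\varphi}+\ii\braket{\varphi|A_2\varphi}\,,
\]
so the image of $W(A)$ under $z\mapsto(\Re(z),\Im(z))\tp$ coincides with $v(\ex\cD(\M_n))$. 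Minkowski's theorem then yields $\cD(\M_n)=\conv(\ex\cD(\M_n))$, hence by linearity $V(A_1,A_2)=\conv(v(\ex\cD(\M_n)))$, and the Hausdorff--Toeplitz theorem guarantees that $W(A)$ is already convex, so the convex hull operation is trivial and the bottom-left identification is indeed a bijection.

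Commutativity of each sub-region now follows: the right-hand square commutes by $v=v\circ\pi$, verified inside the proof of Lemma~\ref{lem:jnr}; the middle triangle commutes because $v$ on $\ex\cD(\M_n)$ is the restriction of $v$ on $\cD(\M_n)$ under the inclusion; and the left-hand square commutes by the displayed identity above together with the real/imaginary part formula. The main conceptual input from outside the excerpt is the Hausdorff--Toeplitz convexity of $W(A)$, without which the left-hand bijection would require passing to the convex hull; beyond that I anticipate no obstacle, as the statement is essentially a bookkeeping exercise assembling Lemma~\ref{lem:jnr}, Minkowski's theorem, and the spectral characterization of extreme points of $\cD(\M_n)$.
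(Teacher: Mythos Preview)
Your proof is correct and follows essentially the same route as the paper: both assemble Lemma~\ref{lem:jnr}, the characterization of $\ex\cD(\M_n)$ as rank-one projections, Minkowski's theorem, and the Toeplitz--Hausdorff theorem. Your surjectivity argument for $W(A)\to V(A_1,A_2)$ is in fact slightly more direct than the paper's---you identify the image of $W(A)$ with $v(\ex\cD(\M_n))$ and take the convex hull via Minkowski applied to $\cD(\M_n)$, whereas the paper first argues that each extreme point of $V(A_1,A_2)$ lies in the image by observing its $v$-preimage is a face of $\cD(\M_n)$ and invoking Minkowski on that face---but the ingredients and logical structure are the same.
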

\begin{proof}
The bottom right triangle is the case $k=2$ of Lemma~\ref{lem:jnr}. The 
map $f_A:\C S^n\to W(A)$ factors through $\ex\cD(\M_n)$, $\cD(\M_n)$, 
and $V(A_1,A_2)$, as $\C S^n\to\cD(\M_n)$, 
$\ket{\varphi}\mapsto\ket{\varphi}\!\!\bra{\varphi}$ parametrizes the 
extreme points of $\cD(\M_n)$, see for example 
\cite[Sec.~5.1]{BengtssonZyczkowski2017}, and since for all 
$\ket{\varphi}\in\C S^n$ we have
\begin{align*}
f_A(\ket{\varphi})
&=\braket{\varphi|A\varphi}
=\tr(\ket{\varphi}\!\!\bra{\varphi}A)
=\left\langle\ket{\varphi}\!\!\bra{\varphi},A\right\rangle\\
&=\left\langle\ket{\varphi}\!\!\bra{\varphi},A_1\right\rangle
+\ii\left\langle\ket{\varphi}\!\!\bra{\varphi},A_2\right\rangle\,.
\end{align*}
It remains to show that $g:W(A)\to V(A_1,A_2)$, $z\mapsto(\Re(z),\Im(z))\tp$ 
is onto (being the restriction of a bijection, the map $g$ is one-to-one). 
\par
First, we show that $\ex V(A_1,A_2)$ is included in the image of $g$. The 
preimage of every extreme point $x$ of $V(A_1,A_2)$ under 
$v:\cD(\M_n)\to V(A_1,A_2)$ contains an extreme point of $\cD(\M_n)$. This 
is true because the preimage of $x$ is a face $\cF$ of $\cD(\M_n)$, which 
has an extreme point $\rho$ by Minkowski's theorem, since $\cD(\M_n)$, and 
hence $\cF$, is compact. Since $\rho$ is also an extreme point of $\cD(\M_n)$, 
the claim follows from $\C S^n\to\ex\cD(\M_n)$ being onto. 
\par
Second, the convex hull of $\ex V(A_1,A_2)$ is included in the image of $g$
because $W(A)$ is convex by the Toeplitz-Hausdorff theorem 
\cite{Davis1971,MaierNetzer2024}. Third, the map $g$ is onto, because 
$V(A_1,A_2)$ is the convex hull of its extreme points, again by Minkowski's 
theorem.
\end{proof}
The affine isomorphism $W(A)\cong\cD(\cR(A_1,A_2))$ of Prop.~\ref{pro:nr} 
has an analogue in the much more general setting of matrix-valued states 
\cite[Thm.~5.1]{Farenick2004}. 
\par
Let $f_A^{-1}$ denote the multi-valued inverse of $f_A:\C S^n\to W(A)$. Corey 
et al. \cite{Corey-etal2013} define $f_A^{-1}$ to be \emph{strongly continuous} 
at $z\in W(A)$ if the map $f_A$ is open at every point in the fiber 
$f_A^{-1}(z)$ of $f_A$ over $z$. 
\par
\begin{rem}\label{rem:eigenfunctions}
Strong continuity can be described in terms of standard results on the 
numerical range. We refer to Sec.~8 of \cite{Leake-etal2014-preimages} 
and the references therein. 
\par
There exists a family of orthonormal bases 
$\ket{\varphi_1(\theta)},\dots,\ket{\varphi_n(\theta)}$ of $\C^n$ that is analytically 
parametrized by a real number $\theta\in\R$; and there are analytic functions 
$\lambda_i:\R\to\R$, called \emph{eigenfunctions} \cite{Leake-etal2014-inverse}, 
such that
\[\textstyle
(\cos(\theta)A_1+\sin(\theta)A_2)\ket{\varphi_i(\theta)}
=\lambda_i(\theta)\ket{\varphi_i(\theta)}\,,
\quad 
\theta\in\R\,,
\quad
i=1,\dots,n\,.
\]
For every $i=1,\ldots,n$, the numerical range $W(A)$ includes the image 
$\Img(z_i)$ of the curve 
\[\textstyle
z_i:\R\to\C\,,
\quad 
\theta\mapsto e^{\ii\theta}(\lambda_i(\theta)+\ii\lambda'_i(\theta))\,.
\]
Every extreme point of $W(A)$ is contained in $\Img(z_i)$ for some 
$i=1,\ldots,n$.
\par
If $z\in W(A)$ is not an extreme point of 
$W(A)$, then $f_A^{-1}$ is strongly continuous at $z$
\cite[Thm.~4]{Corey-etal2013}. Let $z$ be an extreme point of $W(A)$. Then 
there are $\theta_0\in\R$ and $i_0\in\{1,\ldots,n\}$ such that 
$z=z_{i_0}(\theta_0)$. Now, $f_A^{-1}$ is strongly continuous at $z$ if and 
only if $z_i(\theta_0)=z$ implies $z_i=z_{i_0}$ for all $i=1,\ldots,n$, see 
\cite[Thm.~2.1.1]{Leake-etal2014-inverse}. 
Leake at al.~\cite{Leake-etal2014-inverse} state the latter condition by 
saying that the eigenfunctions corresponding to $z$ at $\theta_0$ do not split. 
\end{rem}
\begin{rem}\label{rem:pi-eigenfunctions}
Strong continuity is connected to the openness of a linear map. For all 
$z\in W(A)$, the multi-valued map $f_A^{-1}$ is strongly continuous at $z$ if 
and only if the restricted linear map $v:\cD(\M_n)\to V(A_1,A_2)$, introduced 
in \eqref{eq:proj-jnr2}, is open at every point in the fiber 
$v^{-1}[(\Re(z),\Im(z))\tp]$, see \cite[Coro.~5.2]{Weis2016}. Moreover, for 
every $x\in V(A_1,A_2)$, the map $v$ is open at some point in the relative 
interior\footnote{%
The \emph{relative interior} \cite{Rockafellar1970} of a convex set $K$, denoted 
by $\ri(K)$, is the interior of $K$ in the relative topology of the affine hull 
of $K$.} 
of $v^{-1}(x)$ if and only if $v$ is open at every point in $v^{-1}(x)$.
\end{rem}
\begin{prop}\label{pro:DR-eigenfunctions}
Let $z\in W(A)$, let $x:=(\Re(z),\Im(z))\tp\in\R^2$, let $\rho$ be 
the unique density matrix of $\cR(A_1,A_2)$ that satisfies $v(\rho)=x$,
and let $\pi:\cD(\M_n)\to\cD(\cR(A_1,A_2))$ denote the orthogonal 
projection. Then the following assertions are equivalent.
\begin{itemize}
\item
$f_A^{-1}$ is strongly continuous at $z$,
\item
$v:\cD(\M_n)\to V(A_1,A_2)$ is open at every point in $v^{-1}(x)$, 
\item
$\pi:\cD(\M_n)\to\cD(\cR(A_1,A_2))$ is open at every point in $\pi^{-1}(\rho)$.
\end{itemize}
We have 
$z\in\ex W(A)\Leftrightarrow x\in\ex V(A_1,A_2)
\Leftrightarrow \rho\in\ex\cD(\cR(A_1,A_2))$. If $z\in\ex W(A)$, then
$z=z_{i_0}(\theta_0)$ for some $\theta_0\in\R$ and $i_0\in\{1,\ldots,n\}$,
and the following assertions are equivalent.
\begin{itemize}
\item
$f_A^{-1}$ is strongly continuous at $z$,
\item
the eigenfunctions corresponding to $z$ at $\theta_0$ do not split,
\item
$\pi$ is open at some point in the relative interior of $\pi^{-1}(\rho)$.
\end{itemize}
\end{prop}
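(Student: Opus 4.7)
The plan is to exploit the factorization $v=v\circ\pi$ from Lemma~\ref{lem:jnr}, where the outer $v:\cD(\cR(A_1,A_2))\to V(A_1,A_2)$ is an affine bijection between compact convex sets and hence an affine homeomorphism. This factorization will transport the known statements about $v$ in Rem.~\ref{rem:pi-eigenfunctions} and Rem.~\ref{rem:eigenfunctions} to the corresponding statements about~$\pi$.

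First I would dispose of the extreme-point equivalences. The affine homeomorphism $v:\cD(\cR(A_1,A_2))\to V(A_1,A_2)$ restricts to a bijection between the extreme-point sets, giving $x\in\ex V(A_1,A_2)\Leftrightarrow\rho\in\ex\cD(\cR(A_1,A_2))$. By Prop.~\ref{pro:nr}, the map $g:W(A)\to V(A_1,A_2)$, $z\mapsto(\Re(z),\Im(z))\tp$ is an affine bijection, yielding $z\in\ex W(A)\Leftrightarrow x\in\ex V(A_1,A_2)$.

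For the two triangles of equivalences I would use two observations about the factorization. Since the outer $v$ is bijective, the fibers coincide as subsets of $\cD(\M_n)$, that is $v^{-1}(x)=\pi^{-1}(\rho)$, and hence so do their relative interiors. Since the outer $v$ is also a homeomorphism, for every $\sigma\in\cD(\M_n)$ the map $v$ is open at $\sigma$ if and only if $\pi$ is open at $\sigma$. Combining these two observations with the first half of Rem.~\ref{rem:pi-eigenfunctions} converts the equivalence ``strong continuity of $f_A^{-1}$ at $z$ iff $v$ is open at every point of $v^{-1}(x)$'' into the first triangle. The second triangle, valid under the assumption $z\in\ex W(A)$, is obtained by transferring the second half of Rem.~\ref{rem:pi-eigenfunctions} in exactly the same way, together with the eigenfunction characterization in Rem.~\ref{rem:eigenfunctions}.

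I do not anticipate a genuine obstacle; the argument is essentially a dictionary between $v$ and $\pi$. The only verification needed is that the outer $v$ is a homeomorphism, which follows from Lemma~\ref{lem:jnr} because a continuous bijection from a compact space into a Hausdorff space automatically has a continuous inverse.
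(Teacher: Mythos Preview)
Your proposal is correct and follows essentially the same approach as the paper: the paper's proof simply reads ``The claims follow directly from Rem.~\ref{rem:eigenfunctions} and~\ref{rem:pi-eigenfunctions}, and Prop.~\ref{pro:nr},'' and your argument spells out precisely how those ingredients combine via the factorization $v=v\circ\pi$ with the outer $v$ an affine homeomorphism. Your explicit verification that $v^{-1}(x)=\pi^{-1}(\rho)$ and that openness transfers through the homeomorphism is exactly the dictionary the paper leaves implicit.
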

\begin{proof}
The claims follow directly from Rem.~\ref{rem:eigenfunctions} 
and~\ref{rem:pi-eigenfunctions}, and Prop.~\ref{pro:nr}.
\end{proof}
\begin{exa}\label{ex:riM2M1}
Prop.~\ref{pro:DR-eigenfunctions} ignores that $\pi:\cD(\M_n)\to\cD(\cR(A_1,A_2))$ 
could be open at some point in the fiber $\pi^{-1}(\rho)$ over an extreme point 
$\rho$ of $\cD(\cR(A_1,A_2))$, but not open anywhere in the relative interior of 
$\pi^{-1}(\rho)$.
\par
This occurs for $A_1:=X\oplus 1$ and $A_2:=Z\oplus 0$. As discussed in 
Ex.~\ref{ex:ex1b1}~a) and~b), the orthogonal projection 
$\pi_\cR:\cD(\M_2\oplus\M_1)\to\cD(\cR(A_1,A_2))$ is open at 
$\ket{+}\!\!\bra{+}\oplus0$ and nowhere else in the fiber 
$\pi_\cR^{-1}(M)=\big[\ket{+}\!\!\bra{+}\oplus0,0\oplus 1\big]$ over 
$M=\frac{1}{2}(\ket{+}\!\!\bra{+}\oplus 1)$. In particular, $\pi_\cR$ is not 
open anywhere in the relative interior of $\pi_\cR^{-1}(M)$. Ex.~\ref{ex:ex1c} 
proves the analogue for the larger fiber $\pi^{-1}(M)$.
\end{exa}
%
%%%%%%%%%%%%%%%%%%%%%%%%%%%%%%%%%%%%%%%%%%%%%%%%%%%%%%%%%%%%%%%%%%%%%%%%%%%%
%%%%%%%%%%%%%%%%%%%%%%%%%%%%%%%%%%%%%%%%%%%%%%%%%%%%%%%%%%%%%%%%%%%%%%%%%%%%
%%%%%%%%%%%%%%%%%%%%%%%%%%%%%%%%%%%%%%%%%%%%%%%%%%%%%%%%%%%%%%%%%%%%%%%%%%%%
%%%%%%%%%%%%%%%%%%%%%%%%%%%%%%%%%%%%%%%%%%%%%%%%%%%%%%%%%%%%%%%%%%%%%%%%%%%%
%%%%%%%%%%%%%%%%%%%%%%%%%%%%%%%%%%%%%%%%%%%%%%%%%%%%%%%%%%%%%%%%%%%%%%%%%%%%
%
\section{Retractions}
\label{sec:retraction}
Generalizing a known result, we prove that the set of density matrices of 
every real *-subalgebra of $\M_n$ is a stable convex set. The proof relies 
on retractions of state spaces, a topic that also proved helpful in the 
foundations of quantum information theory \cite{Harremoes2018}. 
\par
\begin{rem}[Stability of state spaces]\label{rem:Dstable}~
\begin{enumerate}
\item[a)]
The stability problem of a finite-dimensional compact convex set $K$ is 
completely solved \cite{Papadopoulou1977}. The \emph{$d$-skeleton} of $K$
is defined as the union of all faces\footnote{%
A \emph{face} \cite{Rockafellar1970} of a convex set $K$ is a convex subset 
$F\subset K$ such that $x,y\in F$ is implied by $(1-s)x + sy\in F$ for all
$x,y\in K$ and $s\in(0,1)$.}  
of $K$ of dimension at most $d$. The convex set $K$ is stable if and only 
if for every nonnegative integer $d$, the $d$-skeleton of $K$ is closed.
\item[b)]
The set of density matrices $\cD(\M_n)$ is stable \cite{Debs1979} because all 
its $d$-skeletons are closed. The closedness follows from three arguments: First,
the set $\cD(\M_n)$ is a compact convex set of dimension $n^2-1$. Second, every 
nonempty face of $\M_n$ is unitarily similar to $\cD(\M_l)\oplus\{0\}$ for some 
positive integer $l$, and third, the unitary group $U(n)$ is compact. 
\item[c)]
The state space $\cS(\cA)$ of every *-subalgebra $\cA$ of $\M_n$ is stable 
\cite{Papadopoulou1982} as it is a direct convex sum of state spaces of full
matrix algebras $\M_n$. As $\cS(\cA)$ is stable, the set of density matrices 
$\cD(\cA)$ is stable, too, by Lemma~\ref{lem:riesz}.
\end{enumerate}
\end{rem}
A \emph{retraction} is an affine map $f:K\to L$ between compact convex sets 
$K,L$ which is left-inverse to an affine map $g:L\to K$, called a 
\emph{section}. 
\par
\begin{prop}\label{pro:Astable}
If $\cA$ is a real *-subalgebra of $\M_n$, then the orthogonal 
projection $\cD(\M_n)\to\cD(\cA)$ is a retraction and 
$\cD(\cA)$ is stable.
\end{prop}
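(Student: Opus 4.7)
The plan is to exhibit $\pi\colon\cD(\M_n)\to\cD(\cA)$ as an affine retraction whose section is the set-theoretic inclusion $\cD(\cA)\hookrightarrow\cD(\M_n)$, and then to transfer stability from $\cD(\M_n)$ (Rem.~\ref{rem:Dstable}~b)) to $\cD(\cA)$ by a retraction argument in the spirit of Clausing~\cite{Clausing1978}.

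To set up the retraction, I first note that Rem.~\ref{rem:intersection} applied with $\cR_1=\M_n$ (viewed as a real matrix system) and $\cR_2=\cA$ gives $\cD(\cA)=\cD(\M_n)\cap\cA$, so the inclusion $\iota\colon\cD(\cA)\to\cD(\M_n)$, $\rho\mapsto\rho$, is a well-defined continuous affine map. The orthogonal projection $\pi\colon\cH(\M_n)\to\cH(\cA)$ fixes $\cH(\cA)$ pointwise and, by Lem.~\ref{lem:dualcones}, restricts to a continuous affine surjection $\pi\colon\cD(\M_n)\to\cD(\cA)$. Since $\pi(\rho)=\rho$ for every $\rho\in\cD(\cA)$, one has $\pi\circ\iota=\idty_{\cD(\cA)}$, so $\pi$ is a retraction with section $\iota$.

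For stability, I would verify openness of the midpoint map of $\cD(\cA)$ at every pair via the standard sequential criterion. Given $\sigma_1,\sigma_2\in\cD(\cA)$ and a sequence $(\rho_n)$ in $\cD(\cA)$ with $\rho_n\to\tfrac{1}{2}(\sigma_1+\sigma_2)$, I view everything inside $\cD(\M_n)$ via $\iota$. The stability of $\cD(\M_n)$ produces $\tau_{1,n},\tau_{2,n}\in\cD(\M_n)$ with $\tau_{i,n}\to\sigma_i$ and $\tfrac{1}{2}(\tau_{1,n}+\tau_{2,n})=\rho_n$. Applying the continuous affine map $\pi$ yields $\pi(\tau_{i,n})\to\pi(\sigma_i)=\sigma_i$ in $\cD(\cA)$ together with $\tfrac{1}{2}(\pi(\tau_{1,n})+\pi(\tau_{2,n}))=\pi(\rho_n)=\rho_n$, which is the required open pre-image lifting.

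The point worth flagging, rather than a genuine obstacle, is that the argument uses only the retraction property $\pi\circ\iota=\idty_{\cD(\cA)}$ together with continuity and affineness of $\iota$ and $\pi$; no openness of $\pi$ is invoked, so there is no circularity with Thm.~\ref{thm:Aopen}, which is established only later in Sec.~\ref{sec:proof-thm}. One could equivalently simply cite Clausing's retraction theorem that every affine retract of a stable compact convex set is stable, which subsumes the short sequential argument above.
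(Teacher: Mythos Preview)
Your proposal is correct and follows essentially the same route as the paper: exhibit the inclusion $\cD(\cA)\hookrightarrow\cD(\M_n)$ as a section of $\pi$ using Rem.~\ref{rem:intersection} and Lemma~\ref{lem:dualcones}, then transfer stability from $\cD(\M_n)$ to its retract $\cD(\cA)$ via Clausing's result. The paper simply cites Clausing's Coro.~1.3 for the second step, whereas you additionally spell out the underlying sequential argument, but this is the same proof.
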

\begin{proof}
The orthogonal projection $\pi:\cD(\M_n)\to\cD(\cA)$ is well defined by 
Lemma~\ref{lem:dualcones} and it is a retraction because the inclusion 
$\cD(\cA)\subset\cD(\M_n)$ of equation \eqref{eq:D-inter} provides a 
section for $\pi$. Coro.~1.3 in \cite{Clausing1978} asserts that the 
image of a stable convex set under a retraction is stable. Therefore, 
and since $\cD(\M_n)$ is stable by Rem.~\ref{rem:Dstable}~b), the convex 
set $\cD(\cA)$ is stable.
\end{proof}
\begin{exa}
Prop.~\ref{pro:Astable} does not generalize to arbitrary matrix systems. 
We consider the hermitian matrices 
\[
A_1:=X\oplus 1\oplus 1\,,
\quad
A_2:=Z\oplus 0\oplus 0\,, 
\quad 
A_3:=0\oplus (-1)\oplus 1\,,
\]
in the algebra $\M_2\oplus\M_1\oplus\M_1$. The joint numerical range 
$V(A_1,A_2,A_3)$, introduced in Sec.~\ref{sec:numerical-ranges}, is easier 
to handle algebraically than the set of density matrices 
$\cD(\cR(A_1,A_2,A_3))$, to which it is affinely isomorphic by 
Lemma~\ref{lem:jnr}. Since $A_1,A_2,A_3\in\M_2\oplus\M_1\oplus\M_1$, we 
have the standard result of 
\[\textstyle
V(A_1,A_2,A_3)
=\conv\big( V(X,Z,0)
\cup V(1,0,-1)
\cup V(1,0,1) \big)\,.
\]
Hence, $V(A_1,A_2,A_3)=\conv(S)$ is the convex hull of 
\[\textstyle
S:=\{(c_1,c_2,0)\tp\in\R^3\mid c_1^2+c_2^2=1\}
\cup\{(1,0,-1)\tp,(1,0,1)\tp\}\,.
\]
The set of extreme points $S\setminus\{(1,0,0)\tp\}$ of $V(A_1,A_2,A_3)$ is 
not closed, hence $V(A_1,A_2,A_3)$ is not stable by Rem.~\ref{rem:Dstable}~a).
\end{exa}
%
%%%%%%%%%%%%%%%%%%%%%%%%%%%%%%%%%%%%%%%%%%%%%%%%%%%%%%%%%%%%%%%%%%%%%%%%%%%%
%%%%%%%%%%%%%%%%%%%%%%%%%%%%%%%%%%%%%%%%%%%%%%%%%%%%%%%%%%%%%%%%%%%%%%%%%%%%
%%%%%%%%%%%%%%%%%%%%%%%%%%%%%%%%%%%%%%%%%%%%%%%%%%%%%%%%%%%%%%%%%%%%%%%%%%%%
%%%%%%%%%%%%%%%%%%%%%%%%%%%%%%%%%%%%%%%%%%%%%%%%%%%%%%%%%%%%%%%%%%%%%%%%%%%%
%%%%%%%%%%%%%%%%%%%%%%%%%%%%%%%%%%%%%%%%%%%%%%%%%%%%%%%%%%%%%%%%%%%%%%%%%%%%
%
\section{Proof of Thm.~\ref{thm:Aopen}}
\label{sec:proof-thm}
The main ideas in establishing Thm.~\ref{thm:Aopen} are that the set of density 
matrices $\cD(\M_n)$ is a stable convex set and that $\cD(\M_n)$ is highly 
symmetric. Being stable and symmetric, $\cD(\M_{p+q})$ projects openly onto 
$\cD(\M_p\oplus\M_q)$. Loosely speaking, Thm.~\ref{thm:Aopen} is obtained by 
combining various such open projections, since every *-subalgebra of $\M_n$ 
is a direct sum of full matrix algebras.
\par
If $K$ is a stable convex set, then the \emph{arithmetic mean map}
\[\textstyle
K^{\times k}\to K\,,
\quad
(x_1,\dots,x_k)\mapsto\frac{1}{k}\sum_{i=1}^k x_i\,,
\]
defined on the $k$-fold cartesian product of $K$, is open for all positive 
integers $k$. More generally, for every tuple $(s_1,\dots,s_k)$ of 
nonnegative real numbers adding up to one, the map
\[\textstyle
K^{\times k}\to K\,,
\quad
(x_1,\dots,x_k)\mapsto\sum_{i=1}^k s_i x_i
\]
is open. The latter assertion is proved for $k=2$ in Prop.~1.1 in 
\cite{ClausingPapadopoulou1978}. By induction, it is true for every 
$k>2$ as well.
\par
\begin{lem}\label{lem:cyclic-group}
Let $\cR$ be a real matrix system on $\C^n$ and let $\cD(\cR)$ be stable
and invariant under an orthogonal transformation $\gamma:\cR\to\cR$ that 
generates a finite cyclic group. Let $\cA:=\{A\in\cR:\gamma(A)=A\}$ be a 
real *-subalgebra of $\M_n$. Then the orthogonal projection 
$\cD(\cR)\to\cD(\cA)$ is open. 
\end{lem}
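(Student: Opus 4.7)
The plan is to exploit the Reynolds-style averaging formula for the orthogonal projection onto a fixed-point subspace. Let $k$ be the order of the cyclic group $G := \langle\gamma\rangle$. Since each $\gamma^i$ is an orthogonal bijection of $\cR$ and fixes every element of $\cA$ pointwise, the averaging operator
\[
P := \tfrac{1}{k}\sum_{i=0}^{k-1}\gamma^i\colon\cR\to\cR
\]
is self-adjoint (the inverses of the $\gamma^i$ range again over $G$), idempotent, and has image $\cA$; hence $P$ is the orthogonal projection $\pi\colon\cR\to\cA$. Restricted to $\cD(\cR)$, $P$ lands in $\cA\cap\cD(\cR)$, which by Rem.~\ref{rem:intersection} equals $\cD(\cA)$ because $\cA$ is a *-subalgebra of $\M_n$. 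So the map $\pi\colon\cD(\cR)\to\cD(\cA)$ coincides with $\rho\mapsto\tfrac{1}{k}\sum_i\gamma^i(\rho)$.

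To prove openness at $\rho_0\in\cD(\cR)$, set $\sigma_0:=\pi(\rho_0)$ and let $\cO\subset\cD(\cR)$ be an open neighborhood of $\rho_0$. Since $\cD(\cR)$ sits in a Euclidean space and is convex, it admits a local base of convex open sets, so I may assume $\cO$ is convex. Stability of $\cD(\cR)$, together with the inductive extension of the midpoint map recalled just before the lemma, makes the arithmetic mean map
\[
m_k\colon\cD(\cR)^{\times k}\to\cD(\cR)\,,\quad
(x_0,\ldots,x_{k-1})\mapsto\tfrac{1}{k}\textstyle\sum_i x_i
\]
open. Because each $\gamma^i$ is a self-homeomorphism of $\cD(\cR)$, the set $\gamma^i(\cO)$ is an open neighborhood of $\gamma^i(\rho_0)$, and
\[
U:=m_k\bigl(\gamma^0(\cO)\times\cdots\times\gamma^{k-1}(\cO)\bigr)
\]
is an open subset of $\cD(\cR)$ containing $\sigma_0$. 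Consequently $U\cap\cD(\cA)$ is an open neighborhood of $\sigma_0$ in $\cD(\cA)$, and it suffices to show $U\cap\cD(\cA)\subset\pi(\cO)$.

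This inclusion is the heart of the argument, and exploits both the $\gamma$-invariance enjoyed by points of $\cA$ and the convexity of $\cO$. Given $\sigma\in U\cap\cD(\cA)$, write $\sigma=\tfrac{1}{k}\sum_i\gamma^i(\rho_i)$ with $\rho_i\in\cO$. Applying $\gamma^j$ and reindexing gives
\[
\sigma=\gamma^j(\sigma)=\tfrac{1}{k}\textstyle\sum_i\gamma^i(\rho_{i-j})\,,\quad j=0,\ldots,k-1\,,
\]
where subscripts are read modulo $k$. Averaging these $k$ identities and using linearity of each $\gamma^i$ collapses the double sum to
\[
\sigma=\tfrac{1}{k}\textstyle\sum_i\gamma^i(\bar\rho)=\pi(\bar\rho)\,,\quad
\bar\rho:=\tfrac{1}{k}\textstyle\sum_j\rho_j\,.
\]
Since $\cO$ is convex and each $\rho_j\in\cO$, the convex combination $\bar\rho$ lies in $\cO$, whence $\sigma\in\pi(\cO)$. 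This establishes $U\cap\cD(\cA)\subset\pi(\cO)$ and thereby openness of $\pi$ at $\rho_0$.

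The main obstacle I anticipate is the last step, specifically the need to convert an arbitrary representation of $\sigma$ as a value of $m_k$ into one that exhibits $\sigma$ as $\pi(\bar\rho)$ for an explicit $\bar\rho\in\cO$; the symmetrization over the orbit of $\gamma$ together with the convexity of $\cO$ is exactly what makes this conversion work. All the other ingredients---the description of $\pi$ as a group average, the openness of $m_k$ from stability, and the inclusion $\cD(\cA)\subset\cD(\cR)$---are either standard or already established in the paper.
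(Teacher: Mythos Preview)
Your proof is correct and follows essentially the same strategy as the paper's: identify the projection onto $\cA$ with the group average $\tfrac{1}{k}\sum_i\gamma^i$, use stability to make the $k$-fold arithmetic mean map open, and then show that $m_k(\gamma^0(\cO)\times\cdots\times\gamma^{k-1}(\cO))\cap\cD(\cA)\subset\pi(\cO)$ for convex $\cO$. The only cosmetic difference is in the verification of this last inclusion: the paper pairs $\sigma=\tfrac{1}{k}\sum_i\gamma^i(\rho_i)$ against an arbitrary $A\in\cH(\cA)$ and uses $\gamma^{-i}(A)=A$ to conclude $\langle\sigma,A\rangle=\langle\bar\rho,A\rangle$, whereas you reach $\sigma=\pi(\bar\rho)$ by averaging the identities $\sigma=\gamma^j(\sigma)$ over $j$; both compute the same thing.
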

\begin{proof}
Let $k$ denote the order of the cyclic group generated by $\gamma$. The main idea 
is to write the orthogonal projection $\pi:\cD(\cR)\to\cD(\cA)$ in terms of the 
arithmetic mean map
\[\textstyle
a:\cD(\cR)^{\times k}\to\cD(\cR)\,,
\quad
(\rho_1,\dots,\rho_k)\mapsto\frac{1}{k}\sum_{i=1}^k\rho_i\,,
\]
which, as discussed above, is open because $\cD(\cR)$ is stable. The proof is done 
in two steps. First, we prove that 
\begin{align}\label{eq:group1}
a(\gamma(\cO)\times\dots\times\gamma^k(\cO))\cap\cA
\end{align}
is an open subset of $\cD(\cA)$ for all open subsets $\cO$ of 
$\cD(\cR)$. Secondly, we prove that 
\begin{align}\label{eq:group2}
a(\gamma(\cK)\times\dots\times\gamma^k(\cK))\cap\cA=\pi(\cK)
\end{align}
holds for all convex subsets $\cK$ of $\cD(\cR)$. The assertions \eqref{eq:group1}
and \eqref{eq:group2} together show that $\pi(\cO)$ is an open subset of $\cD(\cA)$ 
for all convex open subsets $\cO$ of $\cD(\cR)$, and hence for all open subsets.
\par
First, we prove that the set in \eqref{eq:group1} is open. As $\cD(\cR)$ is 
invariant under the orthogonal transformation $\gamma$, the map $\gamma$ 
restricts to a homeomorphism $\cD(\cR)\to\cD(\cR)$. It follows that 
$\gamma(\cO)\times\dots\times\gamma^k(\cO)$ is an open subset of the $k$-fold 
cartesian product $\cD(\cR)^{\times k}$. Then
\[
\widetilde{\cO}:=a(\gamma(\cO)\times\dots\times\gamma^k(\cO))
\]
is an open subset of $\cD(\cR)$, because $\cD(\cR)$ is stable. Finally, 
$\widetilde{\cO}\cap\cA$ is an open subset of $\cD(\cA)$ by equation 
\eqref{eq:D-inter}, which shows that $\widetilde{\cO}\cap\cA$ equals 
$\widetilde{\cO}\cap\cD(\cA)$. 
\par
Secondly, we prove the formula \eqref{eq:group2}, beginning with the inclusion 
``$\supset$''. Let $\rho\in\cK\subset\cD(\cR)$. The density matrix
$\sigma:=a(\gamma(\rho),\dots,\gamma^k(\rho))$ is invariant under $\gamma$. 
This implies $\sigma\in\cA$, hence $\sigma\in\cD(\cA)$ by \eqref{eq:D-inter}. 
Since $\gamma$ is self-adjoint, for all $A\in\cH(\cA)$ we have
\[\textstyle
\Re\langle\rho,A\rangle
=\Re\langle a(\rho,\dots,\rho),A\rangle
=\Re\langle a(\gamma(\rho),\dots,\gamma^k(\rho)),A\rangle
=\Re\langle\sigma,A\rangle\,,
\]
hence $\pi(\rho)=\sigma$. Regarding the inclusion ``$\subset$'',
let $\rho_i\in\cK$, $i=1,\dots,k$. Let 
$\sigma:=a(\gamma(\rho_1),\dots,\gamma^k(\rho_k))$ and 
$\tau:=a(\rho_1,\dots,\rho_k)$. Then for all $A\in\cH(\cA)$
\[\textstyle
\Re\langle\sigma,A\rangle
=\Re\langle a(\gamma(\rho_1),\dots,\gamma^k(\rho_k)),A\rangle
=\Re\langle a(\rho_1,\dots,\rho_k),A\rangle
=\Re\langle\tau,A\rangle
\]
holds. If $\sigma\in\cA$, then $\sigma=\pi(\tau)$ follows. 
As $\cK$ is convex, $\tau\in\cK$ holds and we obtain $\sigma\in\pi(\cK)$.
\end{proof}
\begin{rem}\label{rem:Schur}
A hermitian matrix $M\in\cH(M_{p+q})$ in the block form
\[
M=\begin{pmatrix}
A & B\\
B^\ast & C
\end{pmatrix}\,,
\quad
A\in\cH(M_p)\,,
\quad
B\in\C^{p\times q}\,,
\quad
C\in\cH(M_q)
\]
is positive semidefinite if and only if the top left block $A$ is positive
semidefinite, the range of $B$ is included in the range of $A$, and the 
\emph{generalized Schur complement} $M/A=C-B^\ast A^-B$ is positive 
semidefinite, where $A^-$ is a \emph{generalized inverse} of $A$, that is 
to say, $A^-\in M_p$ and $AA^-A=A$ holds \cite{HornZhang2005}.
\end{rem}
\begin{prop}\label{pro:project-sum}
The orthogonal projection $\cD(\M_{p+q})\to\cD(\M_p\oplus\M_q)$ is open.
\end{prop}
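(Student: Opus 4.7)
The plan is to deduce this from Lemma~\ref{lem:cyclic-group} applied with $\cR=\M_{p+q}$ and $\cA=\M_p\oplus\M_q$. To do so, I need to exhibit an orthogonal involution on $\M_{p+q}$ whose fixed-point set is exactly the block-diagonal subalgebra $\M_p\oplus\M_q$, and then check the remaining hypotheses.

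First I would set $U:=\id_p\oplus(-\id_q)\in\M_{p+q}$, which is unitary and self-adjoint, and define $\gamma:\M_{p+q}\to\M_{p+q}$ by $\gamma(M):=UMU^\ast$. Since $U^2=\id_{p+q}$, one gets $\gamma^2=\idty$, so $\gamma$ generates a cyclic group of order~$2$. A direct block computation shows that for $M=\begin{pmatrix}A&B\\B^\ast&C\end{pmatrix}$ one has $\gamma(M)=\begin{pmatrix}A&-B\\-B^\ast&C\end{pmatrix}$, so the fixed-point set $\{M:\gamma(M)=M\}$ is precisely $\M_p\oplus\M_q$, which is indeed a (real) $\ast$-subalgebra of $\M_{p+q}$. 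Unitary conjugation preserves the Frobenius inner product, hence also its real part, so $\gamma$ is an orthogonal transformation of $\M_{p+q}$ in the sense required by Lemma~\ref{lem:cyclic-group}; similarly $\gamma$ preserves positivity and trace, so $\cD(\M_{p+q})$ is $\gamma$-invariant.

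Next I would invoke Rem.~\ref{rem:Dstable}~b) to conclude that $\cD(\M_{p+q})$ is stable. With all the hypotheses of Lemma~\ref{lem:cyclic-group} verified, the lemma yields that the orthogonal projection $\cD(\M_{p+q})\to\cD(\M_p\oplus\M_q)$ is open. As a sanity check, the projection asserted by Lemma~\ref{lem:cyclic-group} is the map $\rho\mapsto\tfrac{1}{2}(\rho+\gamma(\rho))$, and this indeed sends $\begin{pmatrix}A&B\\B^\ast&C\end{pmatrix}$ to $A\oplus C$, matching the ordinary orthogonal projection of $\M_{p+q}$ onto $\M_p\oplus\M_q$.

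There is no real obstacle here beyond recognizing the right involution $\gamma$: once the sign-flip unitary $U=\id_p\oplus(-\id_q)$ is written down, every hypothesis of Lemma~\ref{lem:cyclic-group} (stability, $\gamma$-invariance of $\cD$, orthogonality of $\gamma$, finite order, and $\cA$ being a $\ast$-subalgebra) is immediate, so the proposition reduces to one application of that lemma.
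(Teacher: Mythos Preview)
Your proof is correct and follows essentially the same route as the paper: both apply Lemma~\ref{lem:cyclic-group} with $\cR=\M_{p+q}$, $\cA=\M_p\oplus\M_q$, and the order-two map $\gamma$ that flips the sign of the off-diagonal blocks. The only difference is cosmetic: the paper describes $\gamma$ as the orthogonal reflection at $\M_p\oplus\M_q$ and verifies that it preserves positive semidefiniteness via the generalized Schur complement (Rem.~\ref{rem:Schur}), whereas you realize the same $\gamma$ as conjugation by the unitary $U=\id_p\oplus(-\id_q)$, which makes orthogonality, trace-preservation, and positivity-preservation immediate. Your verification is in fact a bit slicker.
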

\begin{proof}
Lemma~\ref{lem:cyclic-group} proves the claim when $\cR:=\M_{p+q}$ and 
$\cA:=\M_p\oplus\M_q$. The set of density matrices $\cD(\M_{p+q})$ is stable 
by Rem.~\ref{rem:Dstable}~b). The reflection $\gamma:\M_{p+q}\to\M_{p+q}$ at 
the subspace $\M_p\oplus\M_q$ generates a group of order two. 
\par
We show that $\cD(\M_{p+q})$ is invariant under $\gamma$. In block form, 
the reflection reads
\[\textstyle
\gamma:
\begin{pmatrix}
A & B\\
D & C
\end{pmatrix}
\mapsto
\begin{pmatrix}
A & -B\\
-D & C
\end{pmatrix}\,,
\]
where $A\in\M_p$, $B\in\C^{p\times q}$, $C\in\M_q$, and $D\in\C^{q\times p}$.
The space of hermitian matrices is invariant under $\gamma$, which restricts to 
\[\textstyle
\cH(\M_{p+q})\to\cH(\M_{p+q})\,,
\quad
M=\begin{pmatrix}
A & B\\
B^\ast & C
\end{pmatrix}
\mapsto
M'=\begin{pmatrix}
A & -B\\
-B^\ast & C
\end{pmatrix}\,,
\]
where $A\in\cH(\M_p)$, $B\in\C^{p\times q}$, and $C\in\cH(\M_q)$. By
Rem.~\ref{rem:Schur}, the map $M\mapsto M'$ preserves the positive 
semidefiniteness since $M$ and $M'$ have the same diagonal blocks and 
both off-diagonal blocks differ in a sign, so that $M'/A=M/A$ holds. 
The map $\gamma$ preserves the trace. The set of fixed points
$\M_p\oplus\M_q=\{A\in\M_{p+q}:\gamma(A)=A\}$ is a *-subalgebra of $\M_{p+q}$.
\end{proof}
\begin{cor}\label{cor:project-sum}
The orthogonal projection 
$\cD(\M_n)\to\cD(\M_{n_1}\oplus\dots\oplus\M_{n_k})$ is open,
where $n=n_1+\dots+n_k$.
\end{cor}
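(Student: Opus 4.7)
I would prove the corollary by induction on $k$, using Prop.~\ref{pro:project-sum} as the basic two-summand case and iterating via the direct convex sum machinery of Sec.~\ref{sec:directsums}. The base case $k=1$ is trivial since the projection is then the identity; for $k=2$ it is exactly Prop.~\ref{pro:project-sum}.

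For the inductive step, set $m:=n_2+\dots+n_k$. The plan is to factor the projection $\pi:\cD(\M_n)\to\cD(\M_{n_1}\oplus\dots\oplus\M_{n_k})$ through the intermediate algebra $\M_{n_1}\oplus\M_m$. Since orthogonal projections onto nested subspaces compose, $\pi=\pi_2\circ\pi_1$, where
\[
\pi_1:\cD(\M_n)\to\cD(\M_{n_1}\oplus\M_m)
\quad\text{and}\quad
\pi_2:\cD(\M_{n_1}\oplus\M_m)\to\cD(\M_{n_1}\oplus\M_{n_2}\oplus\dots\oplus\M_{n_k})
\]
are orthogonal projections. By Prop.~\ref{pro:project-sum} applied with $p=n_1$ and $q=m$, the map $\pi_1$ is open. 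It therefore suffices to prove that $\pi_2$ is open, for then $\pi$ is open as a composition of open maps.

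To handle $\pi_2$, I would invoke Lemma~\ref{lem:dir-sum-proj} with $\cR_1':=\M_{n_1}$, $\cR_1:=\M_{n_1}$, and $\cR_i':=\cR_i:=\M_{n_i}$ for $i\geq 2$ paired according to the decomposition $\M_m=\M_{n_2}\oplus\dots\oplus\M_{n_k}$ on the left factor of $\pi_2$'s codomain, so that $\pi_2$ is identified with the direct convex sum
\[
\idty_{\cD(\M_{n_1})}\dcs\pi'\colon\cD(\M_{n_1})\dcs\cD(\M_m)\to\cD(\M_{n_1})\dcs\cD(\M_{n_2})\dcs\cdots\dcs\cD(\M_{n_k}),
\]
where $\pi':\cD(\M_m)\to\cD(\M_{n_2}\oplus\dots\oplus\M_{n_k})$ is the orthogonal projection. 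By the inductive hypothesis $\pi'$ is open, and it is surjective by Lemma~\ref{lem:dualcones}; the identity on $\cD(\M_{n_1})$ is trivially open and surjective. Prop.~\ref{pro:dcs-map} then yields openness of $\idty\dcs\pi'$, hence of $\pi_2$, and the induction closes.

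The only step that requires any real thought is the bookkeeping in the inductive step, namely making sure that the factorization $\pi=\pi_2\circ\pi_1$ is the orthogonal projection onto nested subspaces and that Lemma~\ref{lem:dir-sum-proj} is invoked with the correct grouping of summands; once the right decomposition is written down, everything reduces to Prop.~\ref{pro:project-sum}, Prop.~\ref{pro:dcs-map}, and Lemma~\ref{lem:dir-sum-proj}, with no further geometric input needed.
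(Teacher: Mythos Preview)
Your proof is correct and follows the same inductive strategy as the paper: factor the projection through an intermediate two-block algebra, apply Prop.~\ref{pro:project-sum} for the first factor, and use Lemma~\ref{lem:dir-sum-proj} together with Prop.~\ref{pro:dcs-map} and the induction hypothesis for the second. The only cosmetic difference is that the paper peels off the \emph{last} block $\M_{n_{k+1}}$ (writing the second factor as $\pi_k\dcs\idty$) whereas you peel off the \emph{first} block $\M_{n_1}$ (writing it as $\idty\dcs\pi'$); your description of the invocation of Lemma~\ref{lem:dir-sum-proj} is slightly garbled---you really want the two-summand instance $\cR_1'=\cR_1=\M_{n_1}$, $\cR_2'=\M_m$, $\cR_2=\M_{n_2}\oplus\dots\oplus\M_{n_k}$---but the intended application is clear and correct.
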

\begin{proof}
Proceeding by induction, we observe that the orthogonal projection 
$\cD(\M_{n_1})\to\cD(\M_{n_1})$ is of course open. Let $k\geq 1$ and 
assume that the orthogonal projection
\[\textstyle
\pi_k:\cD(\M_{n_1+\dots+n_k})\to\cD(\M_{n_1}\oplus\dots\oplus\M_{n_k})
\]
is open. The orthogonal projection 
$\M_{n_1+\dots+n_{k+1}}\to\M_{n_1}\oplus\dots\oplus\M_{n_{k+1}}$ factors 
into the orthogonal projections
\begin{align*}
\M_{n_1+\dots+n_{k+1}}
&\to\M_{n_1+\dots+n_k}\oplus\M_{n_{k+1}}\\
\text{and}\quad
\M_{n_1+\dots+n_k}\oplus\M_{n_{k+1}}
&\to\M_{n_1}\oplus\dots\oplus\M_{n_k}\oplus\M_{n_{k+1}}\,.
\end{align*}
Hence, by Lemma~\ref{lem:dualcones}, the map $\pi_{k+1}$ factors into
the orthogonal projections
\begin{align}
\label{eq:corodcs1}
\cD(\M_{n_1+\dots+n_{k+1}})
&\to\cD(\M_{n_1+\dots+n_k}\oplus\M_{n_{k+1}})\\
\label{eq:corodcs2}
\text{and}\quad
\cD(\M_{n_1+\dots+n_k}\oplus\M_{n_{k+1}})
&\to\cD(\M_{n_1}\oplus\dots\oplus\M_{n_k}\oplus\M_{n_{k+1}})\,.
\end{align}
The map \eqref{eq:corodcs1} is open by Prop.~\ref{pro:project-sum}.
Lemma~\ref{lem:dir-sum-proj} shows that \eqref{eq:corodcs2} equals
\begin{align*}
\cD(\M_{n_1+\dots+n_k})\dcs\cD(\M_{n_{k+1}})
&\to\cD(\M_{n_1}\oplus\dots\oplus\M_{n_k})\dcs\cD(\M_{n_{k+1}})\,,
\end{align*}
which is open by Prop.~\ref{pro:dcs-map} (for $m=2$) and by the 
induction hypothesis. Being the composition of two open maps, 
$\pi_{k+1}$ is open.
\end{proof}
\begin{prop}\label{pro:project-tensor}
The orthogonal projection 
$\cD(\bigoplus_{i=1}^k\M_q)\to\cD(\M_q\otimes\id_k)$ is open.
\end{prop}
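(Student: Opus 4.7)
The plan is to apply Lemma~\ref{lem:cyclic-group} with $\cR:=\bigoplus_{i=1}^k\M_q$. Under the natural identification, $\M_q\otimes\id_k$ sits inside $\cR$ as the diagonal embedding
\[
\cA:=\{A\oplus A\oplus\cdots\oplus A\mid A\in\M_q\}\,,
\]
so that the orthogonal projection $\cR\to\cA$ is the averaging map $(A_1,\ldots,A_k)\mapsto\frac{1}{k}\sum_i A_i\oplus\cdots\oplus\frac{1}{k}\sum_i A_i$. In particular, $\cA$ is a *-subalgebra of $\M_{qk}$ isomorphic to $\M_q$, and hence a real *-subalgebra as required by the lemma.

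The symmetry I would use is the cyclic shift of the $k$ summands,
\[
\gamma:\cR\to\cR\,,\quad (A_1,\ldots,A_k)\mapsto(A_k,A_1,\ldots,A_{k-1})\,.
\]
This $\gamma$ is orthogonal with respect to the Frobenius inner product because it permutes an orthogonal direct-sum decomposition of $\cR$; it generates a cyclic group of order $k$; and since it only relabels blocks, it preserves positive semidefiniteness and the trace, so $\cD(\cR)$ is $\gamma$-invariant. Moreover, solving $\gamma(A_1,\ldots,A_k)=(A_1,\ldots,A_k)$ forces $A_1=\cdots=A_k$, so the fixed-point set of $\gamma$ is exactly $\cA$.

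The last hypothesis to check is stability of $\cD(\cR)$, which follows from Rem.~\ref{rem:Dstable}~c) since $\cR$ is itself a complex *-subalgebra of $\M_{qk}$. All hypotheses of Lemma~\ref{lem:cyclic-group} are thus satisfied, and the openness of $\cD(\cR)\to\cD(\cA)$ follows immediately.

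The proposition is essentially set up so as to be a direct application of the cyclic-group lemma. The only real step is recognizing the cyclic shift of the $k$ summands as the relevant symmetry whose fixed-point subalgebra is precisely $\M_q\otimes\id_k$; once that identification is in place, no serious obstacle remains.
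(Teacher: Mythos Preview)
Your proof is correct and follows essentially the same approach as the paper: apply Lemma~\ref{lem:cyclic-group} with $\cR=\bigoplus_{i=1}^k\M_q$, take $\gamma$ to be the cyclic shift of the $k$ summands, verify that its fixed-point set is $\M_q\otimes\id_k$, and invoke Rem.~\ref{rem:Dstable}~c) for stability of $\cD(\cR)$. The only difference is cosmetic---the paper phrases the invariance of $\cD(\cR)$ via Lemma~\ref{lem:dir-sum} rather than by direct inspection of blocks.
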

\begin{proof}
Lemma~\ref{lem:cyclic-group} proves the claim when $\cR:=\bigoplus_{i=1}^k\M_q$ 
and $\cA:=\M_q\otimes\id_k$. The convex set $\cD(\cR)$ is stable by 
Rem.~\ref{rem:Dstable}~c) and equals the $k$-fold direct convex sum
$\cD(\cR)=\cD(\M_q)\dcs\cdots\dcs\cD(\M_q)$ of $\cD(\M_q)$ by 
Lemma~\ref{lem:dir-sum}. Hence, the cyclic permutation $\gamma=(1,\dots,k)$ 
defines the orthogonal transformation
\[\textstyle
\gamma:\cD(\cR)\to\cD(\cR)\,,
\quad
(\sigma_1,\dots,\sigma_k)
\mapsto(\sigma_{\gamma^{-1}(1)},\dots,\sigma_{\gamma^{-1}(k)})\,,
\]
which generates a group of order $k$. Clearly, $\cD(\cR)$ is invariant under 
$\gamma$ and $\cA=\{A\in\cR:\gamma(A)=A\}$ is a *-subalgebra of $\M_{kq}$.
\end{proof}
\begin{proof}[Proof of Thm.~\ref{thm:Aopen}]
Since $\cA$ is a *-subalgebra of $\M_n$, there exists a unitary $n\times n$ matrix 
$U$ such that $U\cA U^\ast=\bigoplus_{i=1}^m\cA_i$, where 
$\cA_i:=\M_{q_i}\otimes\id_{k_i}$ for every $i=1,\dots,m$, and 
$q_1k_1+\dots+q_mk_m=n$, see \cite[Thm.~5.6]{Farenick2001}.
\par
As $\cD(\M_n)\to\cD(\M_n)$, $\rho\mapsto U\rho U^\ast$ is a homeomorphism, it 
suffices to prove that the orthogonal projection 
$\pi:\cD(\M_n)\to\cD(\bigoplus_{i=1}^m\cA_i)$ is open. The orthogonal projection 
$\M_n\to\bigoplus_{i=1}^m\cA_i$ factors into the orthogonal projections
\begin{align*}
\M_n & \textstyle
\to\cB_1\oplus\dots\oplus\cB_m\\
\text{and}\quad
\cB_1\oplus\dots\oplus\cB_m & \textstyle
\to\cA_1\oplus\dots\oplus\cA_m\,,
\end{align*}
where $\cB_i:=\bigoplus_{j=1}^{k_i}\M_{q_i}$, $i=1,\dots,m$. By 
Lemma~\ref{lem:dualcones}, the map $\pi$ factors into
\begin{align}
\label{eq:thmdcs1}
\cD(\M_n)
&\to\cD(\cB_1\oplus\dots\oplus\cB_m)\,,\\
\label{eq:thmdcs2}
\text{and}\quad
\cD(\cB_1\oplus\dots\oplus\cB_m)
&\to\cD(\cA_1\oplus\dots\oplus\cA_m)\,.
\end{align}
The map \eqref{eq:thmdcs1} is open by Coro.~\ref{cor:project-sum} 
(for $k=k_1+\dots+k_m$). Lemma~\ref{lem:dir-sum-proj} shows that the map
\eqref{eq:thmdcs2} equals
\begin{align*}
\cD(\cB_1)\dcs\cdots\dcs\cD(\cB_m)
&\to\cD(\cA_1)\dcs\cdots\dcs\cD(\cA_m)\,,
\end{align*}
which is open by Prop.~\ref{pro:dcs-map} and Prop.~\ref{pro:project-tensor}. 
In conclusion, $\pi$ is open as it is a composition of two open maps.
\end{proof}
%
%
%%%%%%%%%%%%%%%%%%%%%%%%%%%%%%%%%%%%%%%%%%%%%%%%%%%%%%%%%%%%%%%%%%%%%%%%%%%%
%%%%%%%%%%%%%%%%%%%%%%%%%%%%%%%%%%%%%%%%%%%%%%%%%%%%%%%%%%%%%%%%%%%%%%%%%%%%
%%%%%%%%%%%%%%%%%%%%%%%%%%%%%%%%%%%%%%%%%%%%%%%%%%%%%%%%%%%%%%%%%%%%%%%%%%%%
%%%%%%%%%%%%%%%%%%%%%%%%%%%%%%%%%%%%%%%%%%%%%%%%%%%%%%%%%%%%%%%%%%%%%%%%%%%%
%%%%%%%%%%%%%%%%%%%%%%%%%%%%%%%%%%%%%%%%%%%%%%%%%%%%%%%%%%%%%%%%%%%%%%%%%%%%
%
\section{Real *-subalgebras of $\M_n$}
\label{sec:real}
Every real *-subalgebra of $\M_n$ is *-isomorphic \cite[Thm.~5.22]{Farenick2001} 
to a direct sum of algebras of real, complex, and quaternionic 
\mbox{$q$-by-$q$}-matrices of various sizes $q$. We are here interested in the 
algebra $\M_n(\R)$ of real $n\times n$ matrices.
\par
\begin{prop}\label{pro:project-real}
The orthogonal projection $\cD(\M_n)\to\cD(\M_n(\R))$ is open.
\end{prop}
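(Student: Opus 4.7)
The plan is to invoke Lemma~\ref{lem:cyclic-group} with $\cR := \M_n$ (viewed as a real matrix system on $\C^n$), $\cA := \M_n(\R)$, and $\gamma : \M_n \to \M_n$ defined by entrywise complex conjugation $A \mapsto \overline{A}$. This is the natural analogue of the reflection used in Prop.~\ref{pro:project-sum}: complex conjugation is the canonical order-two symmetry of $\M_n$ whose fixed algebra is $\M_n(\R)$.

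The hypotheses of Lemma~\ref{lem:cyclic-group} then reduce to four small checks. First, $\gamma$ is an $\R$-linear involution, so the cyclic group it generates has order two. Second, $\gamma$ is orthogonal with respect to the real inner product $\Re\langle\cdot,\cdot\rangle$: since $\overline{A}^\ast = A\tp$, a short calculation gives $\tr(\overline{A}^\ast\overline{B}) = \tr(A\tp\overline{B}) = \overline{\tr(A^\ast B)}$, whose real part equals $\Re\langle A,B\rangle$. Third, $\cD(\M_n)$ is stable by Rem.~\ref{rem:Dstable}~b). Fourth, $\cD(\M_n)$ is $\gamma$-invariant: writing $\rho = B^\ast B$ gives $\overline{\rho} = \overline{B}^\ast\overline{B} \geq 0$, and $\tr(\overline{\rho}) = \overline{\tr(\rho)} = 1$. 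Finally, the fixed-point set of $\gamma$ is exactly $\M_n(\R)$, which is visibly a real *-subalgebra of $\M_n$.

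With these in place, Lemma~\ref{lem:cyclic-group} yields the openness of $\cD(\M_n) \to \cD(\M_n(\R))$ at once. I do not anticipate any serious obstacle; the only point to watch is that $\gamma$ is merely $\R$-linear, not $\C$-linear, which is precisely what allows a nonzero fixed-subalgebra that is real rather than complex, and is compatible with the real-orthogonality requirement in Lemma~\ref{lem:cyclic-group}.
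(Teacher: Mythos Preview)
Your proposal is correct and is essentially the paper's own proof: the paper also applies Lemma~\ref{lem:cyclic-group} with $\cR=\M_n$, $\cA=\M_n(\R)$, and $\gamma$ the reflection at $\M_n(\R)$, which is precisely entrywise complex conjugation $A+\ii B\mapsto A-\ii B$. The only cosmetic differences are in the verifications---you check positive semidefiniteness via a factorization $\rho=B^\ast B$, whereas the paper uses the characterization $\langle A-\ii B,(C+\ii D)^2\rangle=\langle A+\ii B,(C-\ii D)^2\rangle$ on the decomposition $\cH(\M_n)=\mathrm{Sym}_n(\R)\oplus\ii\,\mathrm{Skew}_n(\R)$.
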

\begin{proof}
Lemma~\ref{lem:cyclic-group} proves the claim when $\cR:=\M_n$ and 
$\cA:=\M_n(\R)$. The set of density matrices $\cD(\M_n)$ is stable 
by Rem.~\ref{rem:Dstable}~b). The reflection $\gamma:\M_n\to\M_n$ at 
the real subspace $\M_n(\R)$ generates a group of order two. 
\par
The algebra $\M_n$ 
is the orthogonal direct sum $\M_n=\M_n(\R)\oplus\ii\,\M_n(\R)$ and the 
reflection at the real subspace $\M_n(\R)$ reads
\[\textstyle
\gamma:
\M_n\to\M_n\,,
\quad
A+\ii B\mapsto A-\ii B\,, 
\quad
A,B\in\M_n(\R)\,.
\]  
The orthogonal transformation $\gamma$ preserves the space of hermitian matrices, 
which is the orthogonal direct sum
\[
\cH(\M_n)=\mathrm{Sym}_n(\R)\oplus\ii\,\mathrm{Skew}_n(\R)
\]
of the space of real symmetric matrices 
\begin{align*}
\mathrm{Sym}_n(\R) 
 & :=\{A\in\M_n(\R)\colon A\tp=A\}
 =\cH(\M_n(\R))
 \end{align*}
and the space of skew-symmetric matrices
\begin{align*}
\mathrm{Skew}_n(\R)
 &:=\{A\in\M_n(\R)\colon A\tp=-A\}\,.
\end{align*}
We prove that $\gamma$ preserves the trace and the positive semidefiniteness 
on the space of hermitian matrices. Let $A,C\in\mathrm{Sym}_n(\R)$ and 
$B,D\in\mathrm{Skew}_n(\R)$. Then 
$\tr(A+\ii B)=\tr(A)$ shows that the trace is preserved. It is well known 
that a matrix is positive semidefinite if and only if its inner product 
with the square of every hermitian matrix is nonnegative. Thus
\begin{align*}
\langle A-\ii B,(C+\ii D)^2\rangle
&=\langle A+\ii B,(C-\ii D)^2\rangle
\end{align*}
shows that $A-\ii B$ is positive semidefinite if $A+\ii B$ is positive 
semidefinite. Clearly, $\M_n(\R)=\{A\in\M_n:\gamma(A)=A\}$ is a real 
*-subalgebra of $\M_n$.
\end{proof}
As the orthogonal projection $\M_n\to\M_n(\R)$ is the entrywise real part,
we denote by $\Re:\cD(\M_n)\to\cD(\M_n(\R))$ the orthogonal projection 
of $\cD(\M_n)$ onto $\cD(\M_n(\R))$.
\par
\begin{exa}\label{ex:sum2real}
We consider the chain $\M_3\supset\M_2\oplus\M_1\supset\M_2(\R)\oplus\M_1(\R)$
of real *-subalgebras of $\M_3$. The orthogonal projections
\[\textstyle
\M_3
\longrightarrow
\M_2\oplus\M_1
\longrightarrow
\M_2(\R)\oplus\M_1(\R)
\]
restrict by Lemma~\ref{lem:dualcones} to 
\[\textstyle
\cD(\M_3)
\stackrel{\pi_1}{\longrightarrow}
\cD(\M_2\oplus\M_1)
\stackrel{\pi_2}{\longrightarrow}
\cD(\M_2(\R)\oplus\M_1(\R))\,.
\]
The map $\pi_1$ is open by Prop.~\ref{pro:project-sum}. By 
Lemma~\ref{lem:dir-sum-proj}, the map $\pi_2$ is the direct convex sum
\[\textstyle
\Re\dcs\idty:
\cD(\M_2)\dcs\{1\}
\to\cD(\M_2(\R))\dcs\{1\}
\]
of $\Re:\cD(\M_2)\to\cD(\M_2(\R))$ and the identity map $\idty:\{1\}\to\{1\}$. 
The map $\Re$ is open by Prop.~\ref{pro:project-real}, hence $\pi_2$ is open 
by Prop.~\ref{pro:dcs-map}. The orthogonal projection 
$\cD(\M_3)\to\cD(\M_2(\R)\oplus\M_1(\R))$ is open, as it is a composition of 
two open maps.
\end{exa}
%
%%%%%%%%%%%%%%%%%%%%%%%%%%%%%%%%%%%%%%%%%%%%%%%%%%%%%%%%%%%%%%%%%%%%%%%%%%%%
%%%%%%%%%%%%%%%%%%%%%%%%%%%%%%%%%%%%%%%%%%%%%%%%%%%%%%%%%%%%%%%%%%%%%%%%%%%%
%%%%%%%%%%%%%%%%%%%%%%%%%%%%%%%%%%%%%%%%%%%%%%%%%%%%%%%%%%%%%%%%%%%%%%%%%%%%
%%%%%%%%%%%%%%%%%%%%%%%%%%%%%%%%%%%%%%%%%%%%%%%%%%%%%%%%%%%%%%%%%%%%%%%%%%%%
%%%%%%%%%%%%%%%%%%%%%%%%%%%%%%%%%%%%%%%%%%%%%%%%%%%%%%%%%%%%%%%%%%%%%%%%%%%%
%
\section{Topology simplified by algebra}
\label{sec:simpler-subalgebras}
Thm.~\ref{thm:Aopen} can simplify topology problems. Given topological spaces 
$K,L$, a map $f:K\to L$ is \emph{continuous} \cite{Kelley1975} at $x\in K$ if 
the preimage of every neighborhood of $f(x)$ in $L$ is a neighborhood of $x$ 
in $K$.
\par
\begin{lem}\label{lem:simplify}
Let $\cR_1,\cR_2$ be real matrix systems on $\C^n$ such that
\mbox{$\cR_2\subset\cR_1$}. Let $\pi_1:\cD(\M_n)\to\cD(\cR_1)$ and 
$\pi_2:\cD(\cR_1)\to\cD(\cR_2)$ denote the orthogonal projections and assume the 
orthogonal projection $\pi_2\circ\pi_1:\cD(\M_n)\to\cD(\cR_2)$ is open. Let 
$f:\cD(\cR_2)\to T$ be a map to a topological space $T$. Let $\rho\in\cD(\cR_1)$.
\begin{enumerate}
\item[a)] The map $f\circ\pi_2:\cD(\cR_1)\to T$ is open at $\rho$ if and only if
$f:\cD(\cR_2)\to T$ is open at $\pi_2(\rho)$.
\item[b)] The map $f\circ\pi_2:\cD(\cR_1)\to T$ is continuous at $\rho$ if and 
only if the map $f:\cD(\cR_2)\to T$ is continuous at $\pi_2(\rho)$.
\end{enumerate}
\end{lem}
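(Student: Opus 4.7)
The plan is to derive both parts from a single intermediate claim: the orthogonal projection $\pi_2 \colon \cD(\cR_1) \to \cD(\cR_2)$ is itself open at $\rho$. Once this is established, parts (a) and (b) reduce to routine neighborhood chases along the composition $\cD(\M_n) \xrightarrow{\pi_1} \cD(\cR_1) \xrightarrow{\pi_2} \cD(\cR_2)$.

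To prove the intermediate claim, I fix an open neighborhood $V$ of $\rho$ in $\cD(\cR_1)$. Lemma~\ref{lem:dualcones} guarantees that $\pi_1$ is surjective, so I may pick $\sigma \in \pi_1^{-1}(\rho)$; continuity of $\pi_1$ then makes $\pi_1^{-1}(V)$ an open neighborhood of $\sigma$ in $\cD(\M_n)$. The hypothesis that $\pi_2 \circ \pi_1$ is open gives that $(\pi_2 \circ \pi_1)(\pi_1^{-1}(V))$ is a neighborhood of $\pi_2(\rho)$, and the identity $\pi_1(\pi_1^{-1}(V)) = V$ (again by surjectivity of $\pi_1$) turns this image into $\pi_2(V)$. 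Hence $\pi_2(V)$ is a neighborhood of $\pi_2(\rho)$.

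For part (a), the forward direction uses only continuity and surjectivity of $\pi_2$: for any open neighborhood $U$ of $\pi_2(\rho)$, the set $\pi_2^{-1}(U)$ is an open neighborhood of $\rho$, so the hypothesis together with $\pi_2(\pi_2^{-1}(U)) = U$ (by surjectivity of $\pi_2$, from Lemma~\ref{lem:dualcones}) yields that $f(U) = (f \circ \pi_2)(\pi_2^{-1}(U))$ is a neighborhood of $f(\pi_2(\rho))$. The reverse direction uses the intermediate step: given an open neighborhood $V$ of $\rho$, I pick an open set $U_0 \subset \pi_2(V)$ containing $\pi_2(\rho)$, and openness of $f$ at $\pi_2(\rho)$ makes $f(U_0) \subset f(\pi_2(V)) = (f \circ \pi_2)(V)$ into a neighborhood of $f(\pi_2(\rho))$. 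For part (b), the forward direction applies the intermediate step to the open neighborhood $(f \circ \pi_2)^{-1}(W) = \pi_2^{-1}(f^{-1}(W))$ of $\rho$ and uses surjectivity of $\pi_2$ to conclude that $f^{-1}(W)$ is a neighborhood of $\pi_2(\rho)$ for any neighborhood $W$ of $f(\pi_2(\rho))$ in $T$; the reverse direction is the standard fact that pre-composition with a continuous map preserves continuity at a point. I foresee no substantive obstacle here; the whole proof is neighborhood bookkeeping, and the only essential input beyond the hypothesis is the surjectivity of $\pi_1$ and $\pi_2$ furnished by Lemma~\ref{lem:dualcones}, which is needed each time a preimage-then-image is collapsed.
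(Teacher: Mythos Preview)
Your proof is correct and follows essentially the same neighborhood-chasing approach as the paper: both rely on lifting through $\pi_1^{-1}$ (using surjectivity of $\pi_1$) and pushing down via the open map $\pi_2\circ\pi_1$, with your version cleanly isolating the intermediate fact that $\pi_2$ is open at $\rho$ while the paper inlines that same reasoning directly in each of the four implications. One harmless slip: in the forward direction of~(b), the set $(f\circ\pi_2)^{-1}(W)$ is a neighborhood of $\rho$ but need not be \emph{open}, since $f\circ\pi_2$ is only assumed continuous at the single point $\rho$; this does not affect the argument, as openness of $\pi_2$ at $\rho$ only requires neighborhoods to map to neighborhoods.
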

\begin{proof}
The orthogonal projection $\cD(\M_n)\to\cD(\cR_2)$ equals indeed 
$\pi_2\circ\pi_1$ by Lemma~\ref{lem:dualcones}.
\par
We begin with the implication ``$\Rightarrow$'' of a). If $\cN_2\subset\cD(\cR_2)$ 
is a neighborhood of $\pi_2(\rho)$, then 
\[\textstyle
f(\cN_2)=(f\circ\pi_2)\circ\pi_2^{-1}(\cN_2)
\]
is a neighborhood of $f(\pi_2(\rho))$ because $\pi_2$ is continuous and 
$f\circ\pi_2$ is open at $\rho$. Regarding the implication ``$\Leftarrow$'', 
we choose a neighborhood $\cN_1\subset\cD(\cR_1)$ of $\rho$. Then
\[\textstyle
f\circ\pi_2(\cN_1)=f\circ(\pi_2\circ\pi_1)\circ\pi_1^{-1}(\cN_1)
\]
is a neighborhood of $f(\pi_2(\rho))$ because $\pi_1$ is continuous, 
$\pi_2\circ\pi_1$ is open, and $f$ is open at $\pi_2(\rho)$.
\par
To prove  b) we choose a neighborhood $\cN_T\subset T$ of $f(\pi_2(\rho))$. 
Regarding the implication ``$\Rightarrow$'', the preimage 
\[\textstyle
f^{-1}(\cN_T)=(\pi_2\circ\pi_1)\circ\pi_1^{-1}\circ(f\circ\pi_2)^{-1}(\cN_T)
\]
is a neighborhood of $\pi_2(\rho)$, because $f\circ\pi_2$ is continuous 
at $\rho$, the map $\pi_1$ is continuous, and $\pi_2\circ\pi_1$ is open.
Regarding the implication ``$\Leftarrow$'', the preimage
\[\textstyle
(f\circ\pi_2)^{-1}(\cN_T)=\pi_2^{-1}\circ f^{-1}(\cN_T)
\]
is a neighborhood of $\rho$, as $f$ is continuous at $\pi_2(\rho)$, 
and $\pi_2$ is continuous.
\end{proof}
\begin{rem}[Simplifying openness problems]\label{rem:simplifyA}
Let $\pi:\cD(\M_n)\to\cD(\cR)$ be the orthogonal projection to a real 
matrix system $\cR$ on $\C^n$ and let $\cA$ be a *-subalgebra of $\M_n$ 
such that $\cR\subset\cA$. Then $\pi=\pi_\cR\circ\pi_\cA$ factors into 
the orthogonal projections $\pi_\cA:\cD(\M_n)\to\cD(\cA)$ and 
$\pi_\cR:\cD(\cA)\to\cD(\cR)$. For every $\rho\in\cD(\M_n)$ the map 
$\pi$ is open at $\rho$ if and only if $\pi_\cR$ is open at 
$\pi_\cA(\rho)$.
\par
Indeed, the map $\pi$ factors by Lemma~\ref{lem:dualcones}. The second 
claim follows from Lemma~\ref{lem:simplify}~a), by letting $\cR_1:=\M_n$ 
and $\cR_2:=\cA$, and by taking $\pi_\cR$ as the map $f:\cD(\cR_2)\to T$. 
The assumptions of the lemma are met since $\pi_\cA$ is open by 
Thm.~\ref{thm:Aopen}.
\end{rem}
The following examples demonstrate the use of Rem.~\ref{rem:simplifyA}.
Continuity problems are a topic of Sec.~\ref{sec:quantum} below. 
\par
\begin{exa}\label{ex:ex1c}
Keeping the notation of $\pi=\pi_\cR\circ\pi_\cA$ from 
Rem.~\ref{rem:simplifyA}, we take $\cR:=\spn_\R(\id_3,X\oplus 1,Z\oplus 0)$
and $\cA:=\M_2\oplus\M_1$.
\begin{enumerate}
\item[a)]
By Ex.~\ref{ex:ex1b1}~a), the map $\pi_\cR$ is not open at any point in the 
half-open segment 
$\cG_0=\big(\ket{+}\!\!\bra{+}\oplus0,0\oplus 1\big]\subset\cD(\cA)$ and 
open at every point in the complement. By Rem.~\ref{rem:simplifyA}, the map 
$\pi:\cD(\M_3)\to\cD(\cR)$ is not open at any point in $\pi_\cA^{-1}(\cG_0)$ 
and open at every point in the complement.
\item[b)]
To describe $\pi_\cA^{-1}(\cG_0)$, we study the fibers of the orthogonal 
projection 
\[\textstyle
\cH(\M_3)\to\cH(\cA)\,,
\quad
\begin{pmatrix}
A & \ket{\varphi}\\
\bra{\varphi} & c
\end{pmatrix}
\mapsto
\begin{pmatrix}
A & 0\\
0 & c
\end{pmatrix}\,,
\]
where $A\in\cH(\M_2)$, $\ket{\varphi}\in\C^2\cong\C^{2\times 1}$, and 
$c\in\R\cong\cH(\M_1)$. Every point in 
$\cG=\big[\ket{+}\!\!\bra{+}\oplus0,0\oplus 1\big]$ is of the form 
$(1-\lambda)\ket{+}\!\!\bra{+}\oplus\lambda$ for some $\lambda\in[0,1]$. 
Using the generalized Schur complement (Rem.~\ref{rem:Schur}), one 
verifies that the fiber of $\pi_\cA$ over this point is the set of all 
matrices
\[\textstyle
\rho(\lambda,z):=
\begin{pmatrix}
(1-\lambda)\ket{+}\!\!\bra{+} & z\ket{+}\\
\overline{z}\bra{+} & \lambda
\end{pmatrix}\,,
\quad
z\in\C\,,
\quad
|z|^2\leq\lambda(1-\lambda)\,,
\]
where $|z|$ denotes the absolute value of $z\in\C$. In conclusion, the 
orthogonal projection $\pi:\cD(\M_3)\to\cD(\cR)$ is not open at any point 
of
\[\textstyle
\pi_\cA^{-1}(\cG_0)
=\{\rho(\lambda,z)\colon 
z\in\C, |z|^2\leq\lambda(1-\lambda),
\lambda\in(0,1]\}
\]
and open at every point in the complement.
\item[c)]
We verify a claim made in Ex.~\ref{ex:riM2M1}. The segment 
$\cG=\big[\ket{+}\!\!\bra{+}\oplus0,0\oplus 1\big]$ is the fiber of 
$\pi_\cR$ over $M=\frac{1}{2}(\ket{+}\!\!\bra{+}\oplus 1)\in\cD(\cR)$ by
Ex.~\ref{ex:ex1b1}~b), so $\pi_\cA^{-1}(\cG)$ is the fiber of 
$\pi=\pi_\cR\circ\pi_\cA$ over $M$. As recalled in part a) above, the map 
$\pi$ is open at $\ket{+}\!\!\bra{+}\oplus0$ but not open at any point of 
$\pi_\cA^{-1}(\cG_0)$. We now observe that $\pi$ is not open at any point 
in the relative interior of $\pi^{-1}(M)$, as we have the chain of 
inclusions
\begin{align*}
&\textstyle 
\hphantom{{}={}}
\pi_\cA\Big(\ri\big((\pi_\cR\circ\pi_\cA)^{-1}(M)\big)\Big)
=\ri\Big(\pi_\cA\big((\pi_\cR\circ\pi_\cA)^{-1}(M)\big)\Big)\\
&\textstyle 
=\ri(\pi_\cR^{-1}(M))
=\ri(\cG)
=\cG_0\setminus\{0\oplus 1\}
\subset\cG_0\,,
\end{align*}
whose first equality holds by \cite[Thm.~6.6]{Rockafellar1970}.
\end{enumerate}
\end{exa}
\begin{exa}\label{ex:2proj}
The recipe of Rem.~\ref{rem:simplifyA} helps analyze the openness of
the orthogonal projection $\cD(\M_n)\to\cD(\cR(P,Q))$ to the real matrix system 
\[\textstyle
\cR(P,Q)=\spn_\R(\id_n,P,Q)
\]
generated by two orthogonal projections $P,Q\in\M_n$, that is to say, matrices
satisfying $P=P^2=P^\ast$ and $Q=Q^2=Q^\ast$. It is well known that the matrix 
system $\cR(P,Q)$ is included in a surprisingly small *-subalgebra of $\M_n$, 
see Coro.~2.2 in the survey \cite{BoettcherSpitkovsky2010} by Böttcher and 
Spitkovsky, and the references therein. More precisely, there exists a unitary 
$n\times n$ matrix $U$, nonnegative integers $m_1\leq 4$ and $m_2$, and positive 
integers $k_i$, $i=1,\ldots,m_1$ satisfying $k_1+\dots+k_{m_1}+2m_2=n$, such 
that $\cR:=U\cR(P,Q)U^\ast$ is included in 
\[\textstyle
\cA:=\big(\bigoplus_{i=1}^{m_1}\M_1\otimes\id_{k_i}\big)
\oplus
\big(\bigoplus_{j=1}^{m_2}\M_2\big)\,.
\]
Since $\rho\mapsto U\rho U^\ast$ is a homeomorphism of $\cD(\M_n)$, the 
openness problems of the orthogonal projections $\cD(\M_n)\to\cD(\cR(P,Q))$ 
and $\cD(\M_n)\to\cD(\cR)$ are equivalent. The second one is substantially 
simplified by the method of Rem.~\ref{rem:simplifyA} as $\cD(\cA)$ has a 
rather simple shape. It is the direct convex sum of several three-dimensional 
Euclidean balls and a simplex of dimension at most three by
Lemma~\ref{lem:dir-sum} and Ex.~\ref{ex:Bloch}. This observation should also 
simplify the strong continuity problem for the numerical range $W(P+\ii Q)$.
\end{exa}
%
%%%%%%%%%%%%%%%%%%%%%%%%%%%%%%%%%%%%%%%%%%%%%%%%%%%%%%%%%%%%%%%%%%%%%%%%%%%%
%%%%%%%%%%%%%%%%%%%%%%%%%%%%%%%%%%%%%%%%%%%%%%%%%%%%%%%%%%%%%%%%%%%%%%%%%%%%
%%%%%%%%%%%%%%%%%%%%%%%%%%%%%%%%%%%%%%%%%%%%%%%%%%%%%%%%%%%%%%%%%%%%%%%%%%%%
%%%%%%%%%%%%%%%%%%%%%%%%%%%%%%%%%%%%%%%%%%%%%%%%%%%%%%%%%%%%%%%%%%%%%%%%%%%%
%%%%%%%%%%%%%%%%%%%%%%%%%%%%%%%%%%%%%%%%%%%%%%%%%%%%%%%%%%%%%%%%%%%%%%%%%%%%
%
\section{Continuity in quantum information theory}
\label{sec:quantum}
We discuss continuity problems of entropic inference maps and of measures 
of correlation. We assume that $\cR$ is a real matrix system on $\C^n$ and 
that, without loss of generality (see Rem.~\ref{rem:realvscomplex}), we 
have $\cR\subset\cH(\M_n)$. 
\par
\begin{exa}[Maximum entropy inference I]\label{exa:ME1}
The purpose of the maximum entropy inference method is to update a prior 
probability distribution if new information becomes available in the form of  
constraints that specify a set of possible posterior probability 
distributions. The preferred posterior is that which minimizes the relative 
entropy from the prior subject to the available constraints, see Chap.~8 in 
Caticha's book \cite{Caticha2022} and the references therein. An analogous 
quantum mechanical inference method can be defined by replacing probability 
distributions with density matrices and the standard relative entropy with 
the Umegaki relative entropy. The axiomatic foundations of the maximum 
entropy inference method were settled for probability distributions in the 
1980's, see Chap.~6 in \cite[pp.~157--160]{Caticha2022}. More than 30 years
later, the axioms of the quantum inference are still a matter of discussion
\cite{Ali-etal2012} but a new approach appeared in the work of Vanslette 
recently \cite{Vanslette2017}.
\par
Linear constraints%
\footnote{Linear constraints can be defined in terms of expectation values. 
Let $A_1,\dots,A_k$ be hermitian $n\times n$ matrices such that 
$\cR=\spn_\R(\id_n,A_1,\dots,A_k)$. The observables represented by 
$A_1,\dots,A_k$ have the \emph{expectation values}
$v(\rho)=(\langle\rho,A_1\rangle,\dots,\langle\rho,A_k\rangle)\tp$ if 
$\rho\in\cD(\M_n)$ is the system state \cite{BengtssonZyczkowski2017}. The 
fiber $\pi^{-1}(\sigma)$ over $\sigma\in\cD(\cR)$ is the set of 
$\rho\in\cD(\M_n)$ whose expectation values are $v(\rho)=v(\sigma)$, see
Lemma~\ref{lem:jnr}.}
on $\cD(\M_n)$ are defined by the orthogonal projection
\[\textstyle
\pi:\cD(\M_n)\to\cD(\cR)\,.
\]
The \emph{relative entropy} $S:\cD(\M_n)\times\cD(\M_n)\to[0,+\infty]$ 
is an asymmetric distance. It is defined by 
$S(\rho_1,\rho_2):=\tr[\rho_1(\log(\rho_1)-\log(\rho_2))]$ if the range 
of $\rho_1$ is included in the range of $\rho_2$ and by 
$S(\rho_1,\rho_2):=+\infty$ otherwise, for all $\rho_1,\rho_2\in\cD(\M_n)$. 
Let $\tau\in\cD(\M_n)$, the \emph{prior}, be a density matrix of 
maximal rank $n$. Then
\[\textstyle
\phi_\tau:\cD(\M_n)\to\R\,,
\quad
\rho\mapsto-S(\rho,\tau)
\]
is continuous and strictly concave. So the \emph{maximum entropy inference map}
\begin{equation}\label{eq:maxentinf}\textstyle
\Psi_\tau:\cD(\cR)\to\cD(\M_n)\,,
\quad
\sigma\mapsto\argmax_{\rho\in\pi^{-1}(\sigma)}\phi_\tau(\rho)
\end{equation}
is well defined, see \cite[Def.~1.1]{Weis2014} and the references therein. 
Discontinuities of this inference map \cite{WeisKnauf2012} aroused interest in 
theoretical physics 
\cite{Chen-etal2015,HuberGuehne2016,Lostaglio-etal2017,Winter2016}. The map 
\eqref{eq:maxentinf} is continuous for commutative real matrix systems $\cR$, 
for example for the inference of probability distributions mentioned above.
\end{exa}
We quote \cite[Thm.~4.9]{Weis2014}.
\par
\begin{thm}\label{thm:MEcont}
Let $\sigma\in\cD(\cR)$. Then $\Psi_\tau$ is continuous at $\sigma$ if and 
only if $\pi$ is open at $\Psi_\tau(\sigma)$.
\end{thm}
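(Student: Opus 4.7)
The plan is to exploit that $\Psi_\tau$ is a section of $\pi$, meaning $\pi\circ\Psi_\tau=\idty_{\cD(\cR)}$, and to separate the two implications. Write $\rho^\ast:=\Psi_\tau(\sigma)$, so that $\pi(\rho^\ast)=\sigma$.

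For the direction ``$\Psi_\tau$ continuous at $\sigma$'' $\Rightarrow$ ``$\pi$ open at $\rho^\ast$'', I would argue directly from the section property. Given any neighborhood $U$ of $\rho^\ast$ in $\cD(\M_n)$, continuity of $\Psi_\tau$ at $\sigma$ produces a neighborhood $V$ of $\sigma$ in $\cD(\cR)$ with $\Psi_\tau(V)\subset U$. Applying $\pi$ gives $V=\pi(\Psi_\tau(V))\subset\pi(U)$, so $\pi(U)$ is a neighborhood of $\sigma$, which proves openness of $\pi$ at $\rho^\ast$.

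For the converse ``$\pi$ open at $\rho^\ast$'' $\Rightarrow$ ``$\Psi_\tau$ continuous at $\sigma$'', I would use the sequential characterization of openness valid in this metric setting: $\pi$ is open at $\rho^\ast$ iff every sequence $\sigma_k\to\sigma$ admits a lift $\tilde\rho_k\to\rho^\ast$ with $\pi(\tilde\rho_k)=\sigma_k$. Given such a lift, continuity of $\phi_\tau$ on $\cD(\M_n)$ — which holds globally because $\tau$ has full rank, so the range inclusion in the definition of the Umegaki relative entropy is automatic — yields $\phi_\tau(\tilde\rho_k)\to\phi_\tau(\rho^\ast)$. Since $\tilde\rho_k\in\pi^{-1}(\sigma_k)$ and $\Psi_\tau(\sigma_k)$ maximizes $\phi_\tau$ on this fiber,
\[
\phi_\tau(\Psi_\tau(\sigma_k))\geq\phi_\tau(\tilde\rho_k)\,.
\]
By compactness of $\cD(\M_n)$, extract a convergent subsequence $\Psi_\tau(\sigma_{k_j})\to\rho'$. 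Continuity of $\pi$ gives $\pi(\rho')=\sigma$, so $\rho'\in\pi^{-1}(\sigma)$, and the limit of the above inequality gives $\phi_\tau(\rho')\geq\phi_\tau(\rho^\ast)$. Strict concavity of $\phi_\tau$ makes the maximizer on the convex fiber $\pi^{-1}(\sigma)$ unique, forcing $\rho'=\rho^\ast$. Since every convergent subsequence of $\Psi_\tau(\sigma_k)$ must converge to $\rho^\ast$, compactness forces $\Psi_\tau(\sigma_k)\to\rho^\ast$, establishing continuity of $\Psi_\tau$ at $\sigma$.

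The main obstacle is the converse implication, where the three ingredients — sequential openness used to lift $\sigma_k$, continuity plus strict concavity of $\phi_\tau$, and uniqueness of the maximizer on the convex fiber — have to be combined carefully; the forward direction is essentially automatic from $\pi\circ\Psi_\tau=\idty$.
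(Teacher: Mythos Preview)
The paper does not give its own proof of this statement; it is quoted from \cite[Thm.~4.9]{Weis2014} without argument. Your proof is correct. The forward direction is exactly the standard observation that a continuous section of a surjection witnesses openness at its values, and your converse correctly combines the sequential lifting characterization of openness (valid in this metric setting), the global continuity and strict concavity of $\phi_\tau$ on $\cD(\M_n)$ (both guaranteed because $\tau$ has full rank), and compactness of $\cD(\M_n)$ to force every subsequential limit of $\Psi_\tau(\sigma_k)$ to coincide with the unique fiber maximizer $\rho^\ast$.
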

In what follows, continuity and openness problems will be simplified by 
factoring $\pi=\pi_\cR\circ\pi_\cA$ through a *-subalgebra $\cA$ of $\M_n$ 
that includes $\cR$, using the notation of Rem.~\ref{rem:simplifyA}.
\par
\begin{exa}[Maximum entropy inference II]\label{exa:ME2}
Before simplifying the openness problem of $\pi$, we describe the set 
$\Psi_\tau(\cD(\cR))$ of posteriors, as the openness only matters for
points in this set by Thm.~\ref{thm:MEcont}. Recalling $\cR=\cH(\cR)$, 
we define 
\[\textstyle
\cE_\tau(\cR)
:=\big\{ \frac{e^{\log(\tau)+A}}{\tr(e^{\log(\tau)+A})} \mid A\in\cR\big\}\,.
\]
The manifold $\cE_\tau(\cR)$ is known as a \emph{Gibbsian family} or 
\emph{exponential family}, see 
\cite{WeisKnauf2012,Niekamp-etal2013,Weis2014,Pavlov-etal2024} and the 
references therein. By (D5) in \cite{Weis2014} we have
\begin{equation}\label{eq:rIexpfamily}\textstyle
\Psi_\tau(\cD(\cR))=
\{\rho_1\in\cD(\M_n):\inf_{\rho_2\in\cE_\tau(\cR)}S(\rho_1,\rho_2)=0\}\,.
\end{equation}
The right-hand side of \eqref{eq:rIexpfamily} is the 
\emph{reverse information closure} or \emph{rI-closure} \cite{CsiszarMatus2003} 
of $\cE_\tau(\cR)$, which is a subset of the Euclidean closure of 
$\cE_\tau(\cR)$.
\par
\begin{enumerate}
\item[a)]
If $\tau=\id_n/n$ is the uniform prior, then
\[\textstyle
\phi_\tau(\rho)
=-S(\rho,\id_n/n)
=S(\rho)-\log(n)\,,
\quad
\rho\in\cD(\M_n)
\]
is the \emph{von Neumann entropy} $S(\rho):=-\tr[\rho\log(\rho)]$ up to a 
constant. By functional calculus, $\cE_\tau(\cR)$ is included in $\cA$ and 
so is the set of posteriors $\Psi_\tau(\cD(\cR))$ as per \eqref{eq:rIexpfamily}, 
because $\cA$ is closed. Rem.~\ref{rem:simplifyA} then shows that for every 
$\rho\in\Psi_\tau(\cD(\cR))$ the openness of $\pi$ at $\rho$ is equivalent 
to the openness of $\pi_\cR=\pi|_{\cD(\cA)}$ at $\rho$
(the same conclusion is true for every prior $\tau$ in $\cA$). This can 
simplify the problem if $\cA$ has a simpler structure or a smaller dimension 
than $\M_n$. An example is given in Ex.~\ref{ex:2proj} above.  
\item[b)]
If the prior $\tau$ lies outside of $\cA$ then $\cE_\tau(\cR)$ is disjoint 
from $\cD(\cA)$, again by functional calculus. Rem.~\ref{rem:simplifyA} 
then shows that for every $\rho\in\Psi_\tau(\cD(\cR))$ the openness of $\pi$ 
at $\rho$ is equivalent to the openness of $\pi_\cR$ at $\pi_\cA(\rho)$.
This is an even greater simplification than in part a) above, because the 
analysis of $\pi$ on $\cD(\M_n)$ is reduced to that of $\pi_\cR$ on 
$\cD(\cA)$.
\item[c)]
Independently of $\cA$, it sometimes helps that the posterior 
$\Psi_\tau(\sigma)$ is contained in the relative interior of the fiber 
$\pi^{-1}(\sigma)$ over $\sigma$ for every $\sigma\in\cD(\cR)$ and prior 
$\tau$ by Coro.~5.7 and Lemma~5.8 in \cite{Weis2014}. As an example, the 
orthogonal projection $\pi:\cD(\M_3)\to\cD(\cR)$ to the real matrix 
system $\cR$ in Ex.~\ref{ex:ex1c} is not open anywhere in the relative 
interior of the fiber over a certain point $M$ and open at every point 
in the complement of that fiber. Thm.~\ref{thm:MEcont} then shows that 
$\Psi_\tau$ is discontinuous at $M$ and continuous everywhere else in 
the ellipse $\cD(\cR)$ for every prior $\tau$.
\end{enumerate}
\end{exa}
From here on, we assume the prior $\tau:=\id_n/n$ be uniform. Using the 
von Neumann entropy $S=\phi_\tau+\log(n)$, we write the inference map 
\eqref{eq:maxentinf} from Ex.~\ref{exa:ME1} as 
\[\textstyle
\Psi:\cD(\cR)\to\cD(\M_n)\,,
\quad
\sigma\mapsto\argmax_{\rho\in\pi^{-1}(\sigma)}S(\rho)\,.
\]
We also write $\cE(\cR):=\cE_\tau(\cR)$ for the exponential family of 
Ex.~\ref{exa:ME2}. 
\par
\begin{exa}[Maximum entropy inference III]\label{exa:ME3}
An important example from physics is the real matrix system of 
\emph{local Hamiltonians} \cite{Chen-etal2015,Zeng-etal2019}.
\par
Every unit $i\in\Omega:=\{1,2,\ldots,N\}$ of an $N$-qubit system is associated 
with a copy $\cA_i$ of the algebra $\M_2$. The subsystem with units in a subset 
$\nu\subset\Omega$ is associated with the tensor product algebra 
$\cA_\nu:=\bigotimes_{i\in \nu}\cA_i$, whose identity we denote by $\id_\nu$. 
We have $\cA_\Omega=\M_n$ for $n=2^N$. The algebra $\cA_\nu$ embeds into 
$\cA_\Omega$ \emph{via} the map $\cA_\nu\to\cA_\Omega$,
$A\mapsto A\otimes\id_{\bar\nu}$, where $\bar\nu=\Omega\setminus\nu$ is the 
complement of $\nu$. Let $\fg$ be a family of subsets of $\Omega$. 
A \emph{$\fg$-local Hamiltonian} is a hermitian matrix in $\cA_\Omega$ of the 
form 
\[\textstyle
\sum_{\nu\in\fg}A_\nu\otimes\id_{\bar\nu}\,, 
\quad 
A_\nu\in\cH(\cA_\nu)\,,
\quad 
\nu\in\fg\,.
\]
We denote the real matrix system of all $\fg$-local Hamiltonians by $\cR_\fg$
and the orthogonal projection by $\pi_\fg:\cD(\cA_\Omega)\to\cD(\cR_\fg)$.
\par
The \emph{partial trace} $\tr_{\bar\nu}:\cA_\Omega\to\cA_\nu$ is the adjoint of 
the embedding $\cA_\nu\to\cA_\Omega$ and satisfies 
$\langle A\otimes\id_{\bar\nu},B\rangle=\langle A,\tr_{\bar\nu}(B)\rangle$ for 
every $A\in\cA_\nu$, $B\in\cA_\Omega$. The partial trace $\tr_{\bar\nu}(\rho)$ 
of $\rho\in\cD(\cA_\Omega)$ is a density matrix of $\cA_\nu$ called 
\emph{reduced density matrix}. Let
\[\textstyle
\red_\fg: 
\cA_\Omega\to\prod_{\nu\in\fg}\cA_\nu\,,
\quad
A\mapsto[\tr_{\bar\nu}(A)]_{\nu\in\fg}
\]
denote the map from $\cA_\Omega$ to the cartesian product of the algebras 
$(\cA_\nu)_{\nu\in\fg}$ that assigns reduced density matrices. 
\par
Linear constraints on $\cD(\cA_\Omega)$ have been defined in terms of reduced 
density matrices, see \cite{Niekamp-etal2013,Chen-etal2015} and
\cite[Sec.~1.4.2]{Zeng-etal2019}. This is formalized in the following diagram,
which commutes by formula (19) in \cite{WeisGouveia2023}. (Obvious restrictions 
of the domain and codomain of $\red_\fg$ are omitted in the sequel.)
\[\textstyle
\xymatrix{%
 & \cD(\cA_\Omega) \ar[d]^{\pi_\fg} \ar@<-1.0ex>[dl]_{\red_\fg} \\
 *+[l]{\red_\fg[\cD(\cA_\Omega)]} \ar@<-.5ex>[r]
 & \cD(\cR_\fg) \ar@<-.5ex>[l]_{\red_\fg}
}\]
The fiber $\pi_\fg^{-1}(\sigma)$ over $\sigma\in\cD(\cR_\fg)$ is the set of 
all $\rho\in\cD(\cA_\Omega)$ such that $\red_\fg(\rho)=\red_\fg(\sigma)$. 
Thm.~\ref{thm:MEcont} proves that the pullback $\Xi:=\Psi\circ\red_\fg^{-1}$ 
of the inference map $\Psi$ under $\red_\fg^{-1}$ is continuous at 
$(\rho_\nu)_{\nu\in\fg}\in\red_\fg[\cD(\cA_\Omega)]$ if and only if 
$\red_\fg$ is open at $\Xi[(\rho_\nu)_{\nu\in\fg}]\in\cD(\cA_\Omega)$.
\par
Chen et al.~\cite[Ex.~4]{Chen-etal2015} discovered a discontinuity of $\Xi$ 
for $N=3$ qubits and $\fg:=\{\{1,2\},\{2,3\},\{3,1\}\}$ at 
$(\rho_\nu)_{\nu\in\fg}\in\red_\fg[\cD(\cA_\Omega)]$, where 
\[\textstyle
\rho_\nu:=\frac{1}{2}(\ket{00}\!\!\bra{00}+\ket{11}\!\!\bra{11})\,,
\quad 
\nu\in\fg
\]
and they offered an interesting interpretation in terms of phase transitions. 
The map $\red_\fg$ being open\footnote{%
The openness of $\red_\fg$ at $\rho\in\cD(\cA_\Omega)$ is \emph{a priori} 
weaker than the continuity of $\Xi$ at $\red_\fg(\rho)$. The 
continuity means that any sufficiently small change of $\red_\fg(\rho)$ 
can be matched by an arbitrarily small change of $\rho$ \emph{inside} the 
image of $\Xi$ and not just anywhere in $\cD(\cA_\Omega)$. 
Somewhat surprisingly, the two propositions are equivalent by 
Thm.~\ref{thm:MEcont}.}
at $\rho\in\cD(\cA_\Omega)$ means that any sufficiently small change of 
$\red_\fg(\rho)$ in $\red_\fg[\cD(\cA_\Omega)]$ is matched by an 
arbitrarily small change of $\rho$ within $\cD(\cA_\Omega)$. Conversely, 
if the openness fails, then there are arbitrarily small changes of 
$\red_\fg(\rho)$ that can only be matched by changes of $\rho$ beyond 
some strictly positive threshold (in the metric sense). Loosely speaking, 
a small change of a subsystem abruptly changes the entire system. Such 
behavior is associated with phase transitions. It motivates every attempt 
to study the openness of $\pi_\fg$. This should be done by tuning the 
interaction pattern $\fg$ to a concrete system. Whether enclosing 
$\cR_\fg$ into a *-subalgebra of $\cA_\Omega$ could simplify this 
problem, as suggested by Exa.~\ref{exa:ME2}, is not yet clarified.
\end{exa}
We finish this paper with a map whose continuity is more subtle than 
that of the inference.
\par
\begin{exa}[Entropy distance I]\label{exa:dE1}%
The \emph{entropy distance} from the exponential family $\cE(\cR)$ is 
defined by
\begin{equation}\label{eq:dE}\textstyle
d:\cD(\M_n)\to\R\,,
\quad
\rho_1\mapsto\inf_{\rho_2\in\cE(\cR)}S(\rho_1,\rho_2)
\end{equation}
and equals the difference
\begin{equation}\label{eq:dEdiff}\textstyle
d(\rho)=S(\Psi\circ\pi(\rho))-S(\rho)\,,
\quad
\rho\in\cD(\M_n)
\end{equation}
between the value of the von Neumann entropy at $\rho$ and the maximal value 
on the fiber of $\pi$ that contains $\rho$, see p.~1288 in \cite{Weis2014}. 
Formula \eqref{eq:dEdiff} suggests studying the continuity of $d$ through 
the \emph{rI-projection} 
\[\textstyle
\Pi:\cD(\M_n)\to\cD(\M_n)\,,
\quad
\Pi:=\Psi\circ\pi\,.
\]
As per Def.~5.2 and equation (D8) in \cite{Weis2014}, the density matrix 
$\Pi(\rho_1)$ is the generalized rI-projection of 
$\rho_1\in\cD(\M_n)$ to $\cE(\cR)$, which is a well-known concept in 
probability theory \cite{CsiszarMatus2003}, and which is defined as follows. 
A sequence $(\tau_i)\subset\cD(\M_n)$ \emph{rI-converges} to 
$\rho_2\in\cD(\M_n)$ if $\lim_iS(\rho_2,\tau_i)=0$ holds. If every sequence 
$(\tau_i)\subset\cE(\cR)$ satisfying $\lim_iS(\rho_1,\tau_i)=d(\rho_1)$ 
rI-converges, independently of the sequence, to a unique $\rho_2\in\cD(\M_n)$, 
not necessarily in $\cE(\cR)$, then $\rho_2$ is the 
\emph{generalized rI-projection} of $\rho_1$ to $\cE(\cR)$.
\end{exa}
We quote from Lemma~5.15 and Lemma~4.5 in \cite{Weis2014}.
\par
\begin{lem}\label{lem:dEcont}~
\begin{enumerate}
\item[a)] For every $\rho\in\cD(\M_n)$ the rI-projection $\Pi$ is continuous 
at $\rho$ if and only if the entropy distance $d$ is continuous at $\rho$.
\item[b)] For every $\sigma\in\cD(\cR)$, the inference map $\Psi$ is continuous 
at $\sigma$ if and only if $d$ is continuous at every point in the fiber 
$\pi^{-1}(\sigma)$.
\end{enumerate}
\end{lem}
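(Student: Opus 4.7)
The plan is to exploit the decomposition $d(\rho)=S(\Pi(\rho))-S(\rho)$ from equation~\eqref{eq:dEdiff}, together with two basic properties of the von Neumann entropy $S$: continuity on the compact set $\cD(\M_n)$, and strict concavity, which makes $\Psi(\sigma)$ the unique maximizer of $S$ on the convex fiber $\pi^{-1}(\sigma)$ for every $\sigma\in\cD(\cR)$.

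The forward directions of both parts should be immediate. For a), if $\Pi$ is continuous at $\rho$ then $d=S\circ\Pi-S$ is continuous at $\rho$ as a difference of continuous functions. For b), if $\Psi$ is continuous at $\sigma$ and $\rho\in\pi^{-1}(\sigma)$, then any $\rho_i\to\rho$ satisfies $\pi(\rho_i)\to\sigma$, hence $\Psi\circ\pi(\rho_i)\to\Psi(\sigma)$, so $d(\rho_i)=S(\Psi\circ\pi(\rho_i))-S(\rho_i)\to d(\rho)$.

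For the converse of a), I would assume $d$ is continuous at $\rho$, take $\rho_i\to\rho$, and note $S(\Pi(\rho_i))=d(\rho_i)+S(\rho_i)\to S(\Pi(\rho))$. By compactness of $\cD(\M_n)$, every subsequence of $(\Pi(\rho_i))$ admits a further convergent subsequence $\Pi(\rho_{i_k})\to\tau$. Continuity of $\pi$ gives $\pi(\tau)=\pi(\rho)$, so $\tau\in\pi^{-1}(\pi(\rho))$ with $S(\tau)=S(\Pi(\rho))$; strict concavity of $S$ on that convex fiber then forces $\tau=\Pi(\rho)$, whence $\Pi(\rho_i)\to\Pi(\rho)$.

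For the converse of b), the key observation I would exploit is that $\pi\circ\Psi=\idty$ on $\cD(\cR)$, so $\Pi\circ\Psi=\Psi$ and consequently $d(\Psi(\sigma'))=0$ for every $\sigma'\in\cD(\cR)$. Given $\sigma_i\to\sigma$, any subsequential limit $\Psi(\sigma_{i_k})\to\tau$ satisfies $\pi(\tau)=\sigma$, so $\tau\in\pi^{-1}(\sigma)$; applying continuity of $d$ at $\tau\in\pi^{-1}(\sigma)$ yields $d(\tau)=\lim_k d(\Psi(\sigma_{i_k}))=0$, hence $S(\tau)=S(\Pi(\tau))=S(\Psi(\sigma))$, and strict concavity of $S$ on $\pi^{-1}(\sigma)$ again identifies $\tau=\Psi(\sigma)$. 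The main obstacle in both converse directions is pinning down a subsequential limit inside the correct fiber while simultaneously certifying that it achieves the entropy maximum there; in b) the identity $d\circ\Psi=0$ is precisely what replaces the direct continuity argument available in a).
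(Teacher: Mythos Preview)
Your argument is correct. The paper itself does not prove this lemma at all; it merely quotes Lemma~5.15 and Lemma~4.5 of \cite{Weis2014}. Your proposal supplies a self-contained proof using only the decomposition \eqref{eq:dEdiff}, continuity and strict concavity of $S$, compactness of $\cD(\M_n)$, and the identity $\pi\circ\Psi=\idty_{\cD(\cR)}$. One small point worth making explicit in part~a): the step ``continuity of $\pi$ gives $\pi(\tau)=\pi(\rho)$'' tacitly uses $\pi\circ\Pi=\pi$ (equivalently, $\Pi(\rho_{i_k})\in\pi^{-1}(\pi(\rho_{i_k}))$), which is immediate from $\Pi=\Psi\circ\pi$ and $\pi\circ\Psi=\idty$, but deserves a sentence since it is exactly what places the subsequential limit in the right fiber.
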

\begin{exa}[Entropy distance II]\label{exa:dE2}%
The entropy distance from the exponential family $\cE(\cR_\fg)$ of local 
Hamiltonians (Ex.~\ref{exa:ME3}) is interesting because it quantifies many-body 
correlations. Amari \cite{Amari2001} and Ay \cite{Ay2002} studied this type of 
correlation measures in probability theory. Linden et al.~\cite{Linden-etal2002} 
introduced it to quantum mechanics as a difference of von Neumann entropies like 
formula \eqref{eq:dEdiff}, see also \cite[Sec.~1.4.2]{Zeng-etal2019}. Zhou 
\cite{Zhou2009} proved the equality of the two representations \eqref{eq:dE} 
and \eqref{eq:dEdiff} for density matrices of maximal rank $n$, see also 
\cite{Niekamp-etal2013}; the equality is true without rank restrictions as 
stated in Ex.~\ref{exa:dE1}.
\end{exa}
\begin{prop}\label{pro:simp-d}
For every $\rho\in\cD(\M_n)$, the entropy distance $d$ is continuous at $\rho$
if and only if its restriction $d|_{\cD(\cA)}$ is continuous at $\pi_\cA(\rho)$.
\end{prop}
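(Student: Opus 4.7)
The plan is to reduce Prop.~\ref{pro:simp-d} to Lemma~\ref{lem:simplify}~b) by expressing $d$ on $\cD(\M_n)$ as the pullback of $d|_{\cD(\cA)}$ under the orthogonal projection $\pi_\cA:\cD(\M_n)\to\cD(\cA)$, plus a term that is continuous everywhere on $\cD(\M_n)$. Since $\pi_\cA$ is open by Thm.~\ref{thm:Aopen}, the lemma will then translate continuity at $\rho$ to continuity at $\pi_\cA(\rho)$.

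The key input is a chain rule for the Umegaki relative entropy with respect to the conditional expectation $\pi_\cA$: for every $\rho\in\cD(\M_n)$ and $\sigma\in\cD(\cA)$,
\begin{equation*}
S(\rho,\sigma) = S(\rho,\pi_\cA(\rho)) + S(\pi_\cA(\rho),\sigma).
\end{equation*}
This is a two-line calculation using $\log\sigma\in\cA$ together with the defining identity $\tr(\rho X)=\tr(\pi_\cA(\rho)X)$ for $X\in\cA$, which holds because $\pi_\cA$ is self-adjoint for the Frobenius inner product. Under the uniform prior, Exa.~\ref{exa:ME2}~a) ensures $\cE(\cR)\subset\cD(\cA)$, so taking the infimum over $\sigma\in\cE(\cR)$ yields $d(\rho)=S(\rho,\pi_\cA(\rho))+d(\pi_\cA(\rho))$ on all of $\cD(\M_n)$.

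Set $g(\rho):=S(\rho,\pi_\cA(\rho))=S(\pi_\cA(\rho))-S(\rho)$. The equality of the two expressions for $g$ relies on the support-dominance fact $\mathrm{ran}(\rho)\subset\mathrm{ran}(\pi_\cA(\rho))$, which follows from $\tr(\rho(\id_n-P))=\tr(\pi_\cA(\rho)(\id_n-P))=0$ for the support projection $P\in\cA$ of $\pi_\cA(\rho)$; granted this, $g$ is a difference of two compositions of the continuous von Neumann entropy with continuous maps, hence continuous on $\cD(\M_n)$. Since $g$ vanishes on $\cD(\cA)$, we obtain the global decomposition $d=(d|_{\cD(\cA)})\circ\pi_\cA+g$.

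Finally, I would apply Lemma~\ref{lem:simplify}~b) with $\cR_1:=\M_n$, $\cR_2:=\cA$, and $f:=d|_{\cD(\cA)}$; the hypothesis that $\pi_\cA$ is open is exactly Thm.~\ref{thm:Aopen}. The lemma gives that $(d|_{\cD(\cA)})\circ\pi_\cA$ is continuous at $\rho$ if and only if $d|_{\cD(\cA)}$ is continuous at $\pi_\cA(\rho)$, and adding the everywhere-continuous $g$ preserves the equivalence for $d$. The main obstacle is the chain-rule step together with the support-dominance argument needed to make the finite formula $g(\rho)=S(\pi_\cA(\rho))-S(\rho)$ valid uniformly on $\cD(\M_n)$, not merely on full-rank matrices; once these are in place, the proposition is a direct consequence of Thm.~\ref{thm:Aopen}.
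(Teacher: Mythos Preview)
Your argument is correct and takes a genuinely different route from the paper. The paper detours through the rI-projection $\Pi=\Psi\circ\pi$: it invokes Lemma~\ref{lem:dEcont}~a) (Lemma~5.15 of \cite{Weis2014}) to replace continuity of $d$ at $\rho$ by continuity of $\Pi$ at $\rho$, observes $\Pi=\Pi^\cA\circ\pi_\cA$ with $\Pi^\cA:=\Psi^\cA\circ\pi_\cR$, applies Lemma~\ref{lem:simplify}~b) to $\Pi^\cA$, and then invokes Lemma~5.15 of \cite{Weis2014} a second time (now on $\cD(\cA)$) to pass back to $d|_{\cD(\cA)}$. You instead derive the Pythagorean decomposition $d=(d|_{\cD(\cA)})\circ\pi_\cA+g$ with $g(\rho)=S(\pi_\cA(\rho))-S(\rho)$ directly from the chain rule for relative entropy under the conditional expectation $\pi_\cA$, and apply Lemma~\ref{lem:simplify}~b) to $d|_{\cD(\cA)}$ itself. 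Your approach is more self-contained---it avoids the two appeals to the external Lemma~5.15 and does not pass through the inference map at all---and it makes transparent why the reduction works: $\pi_\cA$ splits the entropy distance additively with a continuous defect. The paper's route has the advantage of reusing the already-stated equivalence between $d$ and $\Pi$, keeping the argument inside the maximum-entropy framework of Sec.~\ref{sec:quantum}.
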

\begin{proof}
Let $\rho\in\cD(\M_n)$. By Lemma~\ref{lem:dEcont}~a), the entropy distance $d$ 
is continuous at $\rho$ if and only if the rI-projection $\Pi$ is continuous at 
$\rho$. The inference map
\[\textstyle
\Psi^\cA:\cD(\cR)\to\cD(\cA)\,,
\quad
\sigma\mapsto\argmax_{\eta\in\pi_\cR^{-1}(\sigma)}S(\eta)
\]
has the same values as $\Psi$, whose image $\Psi(\cD(\cR))$ is included in 
$\cD(\cA)$ by Ex.~\ref{exa:ME2}~a). Therefore, 
\[
\Pi=\Psi\circ\pi=\Psi\circ\pi_\cR\circ\pi_\cA
\]
is continuous at $\rho$ if and only if $\Psi^\cA\circ\pi_\cR\circ\pi_\cA$ 
is continuous at $\rho$, if and only if $\Pi^\cA\circ\pi_\cA$ is 
continuous at $\rho$, where 
\[
\Pi^\cA:\cD(\cA)\to\cD(\cA)\,,
\quad
\Pi^\cA:=\Psi^\cA\circ\pi_\cR\,.
\]
Since $\pi_\cA$ is open by Thm.~\ref{thm:Aopen}, it follows from 
Lemma~\ref{lem:simplify}~b) that $\Pi^\cA\circ\pi_\cA$ is continuous at 
$\rho$ if and only if $\Pi^\cA$ is continuous at $\pi_\cA(\rho)$. By
equation \eqref{eq:dEdiff},  
\[
d|_{\cD(\cA)}(\eta)
=S(\Psi^\cA\circ\pi_\cR(\eta))-S(\eta)\,,
\quad
\eta\in\cD(\cA)
\]
holds. Hence, $\Pi^\cA$ is continuous at $\pi_\cA(\rho)$ if and only if 
$d|_{\cD(\cA)}$ is continuous at $\pi_\cA(\rho)$, again by Lemma~5.15~1)
in \cite{Weis2014}.
\end{proof}
\begin{exa}[Entropy distance III]\label{exa:dE3}%
Prop.~\ref{pro:simp-d} helps solve the continuity problem of the entropy 
distance from $\cE(\cR)$ for $\cR:=\spn_\R(\id_3,X\oplus 1,Z\oplus 0)$ using 
the solution of the continuity problem of the restriction $d|_{\cD(\cA)}$ to 
the real *-subalgebra $\cA:=\M_2(\R)\oplus\M_1(\R)$ of $\M_3$. This solution 
was obtained from asymptotic curvature estimates \cite{Weis2014}. Real 
*-subalgebras are excluded from Proposition~\ref{pro:simp-d} but the conclusion 
is still true, as Ex.~\ref{ex:sum2real} can replace Thm.~\ref{thm:Aopen} in 
the proof of Prop.~\ref{pro:simp-d}.
\par
We recall from Ex.~\ref{ex:ex1b2} that the generatrix 
$\cG=[\ket{+}\!\!\bra{+}\oplus0,0\oplus 1]$ of the cone $\cD(\cA)$
is the fiber of the orthogonal projection $\pi_\cR:\cD(\cA)\to\cD(\cR)$
over $M:=\frac{1}{2}(\ket{+}\!\!\bra{+}\oplus 1)$. Thm.~5.18 in 
\cite{Weis2014} shows that $d|_{\cD(\cA)}$ is discontinuous at every point
in the half-open segment 
\[\textstyle
\cG_d:=
\big[\ket{+}\!\!\bra{+}\oplus0,M\big)
=\big\{(1-\lambda)\ket{+}\!\!\bra{+}\oplus\lambda
 \colon \lambda\in[0,\frac{1}{2}) \big\}
\]
and continuous at every point in the complement\footnote{%
Unitary similarity with respect to $U:=\exp(\ii \frac{\pi}{3\sqrt{3}}(X+Y+Z))$
permutes the Pauli matrices cyclicly. With respect to $U\oplus 1$ it matches 
$\cR$ and the problem considered in \cite{Weis2014}.}.
Prop.~\ref{pro:simp-d} then shows that the entropy distance 
$d$ is discontinuous at every point in $\pi_\cA^{-1}(\cG_d)$ and 
continuous at every point in the complement. 
\par
This result is consistent with the assertion of Ex.~\ref{exa:ME2}~c) that
the inference map $\Psi$ is discontinuous at $M$ and continuous anywhere 
else in the ellipse $\cD(\cR)$, as required by Lemma~\ref{lem:dEcont}~b). 
The points in the set $\pi_\cA^{-1}(\cG_d)$ are explicitly described in 
Ex.~\ref{ex:ex1c}~b). 
\end{exa}
%
%%%%%%%%%%%%%%%%%%%%%%%%%%%%%%%%%%%%%%%%%%%%%%%%%%%%%%%%%%%%%%%%%%%%%%%%%%%%
%%%%%%%%%%%%%%%%%%%%%%%%%%%%%%%%%%%%%%%%%%%%%%%%%%%%%%%%%%%%%%%%%%%%%%%%%%%%
%%%%%%%%%%%%%%%%%%%%%%%%%%%%%%%%%%%%%%%%%%%%%%%%%%%%%%%%%%%%%%%%%%%%%%%%%%%%
%%%%%%%%%%%%%%%%%%%%%%%%%%%%%%%%%%%%%%%%%%%%%%%%%%%%%%%%%%%%%%%%%%%%%%%%%%%%
%%%%%%%%%%%%%%%%%%%%%%%%%%%%%%%%%%%%%%%%%%%%%%%%%%%%%%%%%%%%%%%%%%%%%%%%%%%%
%
\vskip\baselineskip\noindent
\textbf{Acknowledgments.} 
The author is grateful to
Ariel Caticha,
Achim Clausing, 
Chi-Kwong Li, 
Tim Netzer,
Ilya M.~Spitkovsky,
and Ivan Todorov
for remarks, feedback, and discussions. 
The author gave a lecture on the results of this paper in the special 
session ``Numerical Ranges'' of the conference IWOTA 2024 in 
Canterbury, UK, and wishes to thank the organizers for this 
opportunity.
\par
%
%%%%%%%%%%%%%%%%%%%%%%%%%%%%%%%%%%%%%%%%%%%%%%%%%%%%%%%%%%%%%%%%%%%%%%%%%%%%
%%%%%%%%%%%%%%%%%%%%%%%%%%%%%%%%%%%%%%%%%%%%%%%%%%%%%%%%%%%%%%%%%%%%%%%%%%%%
%%%%%%%%%%%%%%%%%%%%%%%%%%%%%%%%%%%%%%%%%%%%%%%%%%%%%%%%%%%%%%%%%%%%%%%%%%%%
%%%%%%%%%%%%%%%%%%%%%%%%%%%%%%%%%%%%%%%%%%%%%%%%%%%%%%%%%%%%%%%%%%%%%%%%%%%%
%%%%%%%%%%%%%%%%%%%%%%%%%%%%%%%%%%%%%%%%%%%%%%%%%%%%%%%%%%%%%%%%%%%%%%%%%%%%
%
\bibliographystyle{plain}

%
%
%%%%%%%%%%%%%%%%%%%%%%%%%%%%%%%%%%%%%%%%%%%%%%%%%%%%%%%%%%%%%%%%%%%%%%%%%%%%
%%%%%%%%%%%%%%%%%%%%%%%%%%%%%%%%%%%%%%%%%%%%%%%%%%%%%%%%%%%%%%%%%%%%%%%%%%%%
%%%%%%%%%%%%%%%%%%%%%%%%%%%%%%%%%%%%%%%%%%%%%%%%%%%%%%%%%%%%%%%%%%%%%%%%%%%%
%%%%%%%%%%%%%%%%%%%%%%%%%%%%%%%%%%%%%%%%%%%%%%%%%%%%%%%%%%%%%%%%%%%%%%%%%%%%
%%%%%%%%%%%%%%%%%%%%%%%%%%%%%%%%%%%%%%%%%%%%%%%%%%%%%%%%%%%%%%%%%%%%%%%%%%%%
%
\vspace{\baselineskip}
\parbox{10cm}{%
Stephan Weis\\
Czech Technical University in Prague\\ 
Faculty of Electrical Engineering\\
Karlovo nám\v{e}stí 13\\
12000, Prague 2\\
Czech Republic\\
e-mail \texttt{maths@weis-stephan.de}}
\end{document}